\newtheorem{theorem}{Theorem}[section]
\newtheorem{lemma}[theorem]{Lemma}
\newtheorem{proposition}[theorem]{Proposition}
\newtheorem{corollary}[theorem]{Corollary}
\newtheorem{definition}[theorem]{Definition}
\newtheorem{axiom}{Axiom}
\newtheorem*{axiom*}{Axiom}
\newtheorem*{observation*}{Observation}
\DeclareFontFamily{U}{min}{}
\DeclareFontShape{U}{min}{m}{n}{<-> udmj30}{}
\DeclareMathOperator{\Sub}{Sub}
\DeclareMathOperator{\dom}{dom}
\DeclareMathOperator{\im}{im}
\DeclareMathOperator{\st}{\mathbf{st}}
\begin{document}

\title{Nonstandard proof methods in toposes}
\author{Jos\'e Siqueira}
\date{\small Gonville \& Caius College, University of Cambridge, England}

\maketitle

\begin{abstract}
We determine sufficient structure for an elementary topos to emulate E. Nelson's Internal Set Theory in its internal language, and show that any topos satisfying the internal axiom of choice occurs as a universe of standard objects and maps. This development allows one to employ the proof methods of nonstandard analysis (transfer, standardisation, and idealisation) in new environments such as toposes of $G$-sets and Boolean \'etendues.
\end{abstract}



\section{Introduction}
\label{sec:introduction}
The axioms of Internal Set Theory (IST) concisely capture the proof methods employed in Robinson's nonstandard analysis, with the goal of making such tools readily available to the working mathematician without a background in model theory \cite{nelson1977}. The device known as the \emph{internal language} of a category can play a similar role: one need not know the intricacies of categorical logic in order to do internal reasoning (only learn to operate within the appropriate logic, e.g., intuitionistic logic), which can then be externalised to statements about unfamiliar objects if they so fancy. Here we combine both gadgets into one: we introduce structure a topos can have in order for its internal language to support the tools of nonstandard analysis (using IST as proxy). 

This is, of course, not the first attempt at a categorification of nonstandard methods \cite{TTFNSE, MOERDIJK199537, palmgren1998,  PALMGREN2001275}, but it comes from a new perspective which is rather natural and addresses all three schema from IST instead of focusing on just one in isolation.
The point of view is that the additional axiom schemes of internal set theory (relative to ZFC) are the result of relationships between \emph{hyperdoctrines} \cite{lawvere}, tools which allow us to abstract away from the ideas of \emph{internal formula}, \emph{internal formula with standard parameters}, and \emph{external formula} while preserving their first-order logical features. Starting from a set theory as a template also allows us to leverage the well-known connections between topos theory and set theory \cite{LawvereETCS,  MITCHELL1972261, OSIUS197479, SHULMAN2019465} --- taking the ``internal universe" to be an elementary topos with extra structure that satisfies certain properties ought to be seen as a straight-up generalisation of starting from a model of ZFC with an additional predicate that obeys some axioms.


It is possible to formulate the topos-theoretic analogue to the axioms of IST in abstract and without reference to a pre-conceived notion of ``standard element" \cite{TriposModels}, but this paper is concerned with toposes whose behaviour mirror that of the set-theoretic models of Nelson's IST: those for which \emph{there is} a well-behaved `class' of nonstandard elements, the meaning of which will be made clear in Section~\ref{sec:NelsonToposes}. Reasoning resorting to the ``nonstandard principles" of transfer, standardisation and idealisation can be carried out internal to such a topos to provide valid proofs, as long as it is otherwise intuitionistic. This enables the formulation of arguments such as those presented in  \cite{nelson1977}, but that still hold in other settings. 

The paper is structured in three sections. In Section 2, we introduce classical Internal Set Theory, which will be the inspiration for the theory developed in the paper, and settle some terminology. We will assume familiarity with the notions of (first-order) \emph{hyperdoctrine} and of \emph{tripos}; readers not acquainted with them are encouraged to read \cite{lawvere}, \cite{hyland_johnstone_pitts_1980}, and \cite{van2008realizability}.

Section 3 concerns the implementation of a topos-theoretic version of Internal Set Theory. We show that the category of standard objects and standard maps of a model of Internal Set Theory is a topos, which embeds logically and conservatively into the corresponding category of IST-sets. This serves as inspiration for the new notion of Nelson structure, which captures the idea of a topos admitting a nonstandard extension that includes a `predicate of standard elements' for each of its objects. We then discuss what it means for such a structure to satisfy topos-theoretic versions of transfer, standardisation, and idealisation, which validate these methods within the internal logic. 

Section 4 is devoted to building a class of examples of \emph{Nelson models} --- those Nelson structures that satisfy the topos-theoretic version of IST axioms. This is done via exploiting the notion of \emph{internal ultrafilter} \cite{VOLGER1975345} to implement Robinson-style ultrapower models. We show that any topos satisfying the internal axiom of choice serves as the universe of standard objects and morphisms of a Nelson structure that satisfies transfer and standardisation and, moreover, that there is a Nelson structure for which a limited form of idealisation also holds. This makes it possible to employ the methods of nonstandard analysis in other settings (e.g., toposes of $G$-sets or a Boolean \'etendue).

\section{Background}

We summarise here the specifics of Nelson's Internal Set Theory (IST), the categorical logic facts we will employ, and some associated notation. \\

The signature for IST consists of one relation symbol $\in$ of arity $2$, one relation symbol $\st$ of arity $1$, and no function symbols. A formula in this language is \emph{internal} if it is also a formula of ZFC, i.e. it does not involve the new predicate $\st$. The axioms of IST are those of Zermelo-Fraenkel set theory (with Choice), together with three additional axiom schema that determine the behaviour of sets satisfying $\st$, the \emph{standard sets}. For the record, they are\footnote{Here we use the abbreviations $\forall^{\st} x.\ \phi(x)$ and $\exists^{\st} x.\ \phi(x)$ to mean $\forall x.\ (\st(x) \Rightarrow \phi(x))$ and $\exists x.\ (\st(x) \land \phi(x))$, respectively.}:

\begin{axiom}[Transfer]
If $\phi(\Vec{x}, \Vec{t})$ is an internal formula with standard parameters $\Vec{t}\coloneqq t_1, \cdots, t_m$ that holds for all standard sets $\Vec{x}\coloneqq x_1, \cdots, x_n$, then it holds for all sets $\Vec{x}$:
\begin{equation*}
    (\forall^{\st} \Vec{x}.\ \phi(\Vec{x}, \Vec{t}) )\Rightarrow (\forall \Vec{x}.\ \phi(\Vec{x}, \Vec{t}))
\end{equation*}
\label{ax:transfer}
\end{axiom}

This means that the truth of \emph{internal formulae with standard parameters} needs to be verified for standard sets only. This is the main tool of nonstandard analysis in its classical formulation, which is often employed in the form of an easy consequence of its contrapositive:

\begin{proposition}[Uniqueness principle]
    Let $\phi(x,\Vec{t})$ be an internal formula with standard parameters. If there exists an unique set $x$ such that $\phi(x,\Vec{t})$, then such a set $x$ is standard.
\end{proposition}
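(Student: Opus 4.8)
The plan is to derive the principle from the existential (dual) form of the Transfer axiom, which I would first establish by contraposition. Since IST extends ZFC and we reason classically, I may freely negate and contrapose. Applying Axiom~\ref{ax:transfer} to the internal formula $\neg\phi(x, \Vec{t})$---which still carries only the standard parameters $\Vec{t}$---yields
\begin{equation*}
  (\forall^{\st} x.\ \neg\phi(x, \Vec{t})) \Rightarrow (\forall x.\ \neg\phi(x, \Vec{t})).
\end{equation*}
Unwinding the abbreviations, the antecedent reads $\neg \exists^{\st} x.\ \phi(x, \Vec{t})$ and the consequent reads $\neg \exists x.\ \phi(x, \Vec{t})$, so taking the contrapositive gives the existential transfer
\begin{equation*}
  (\exists x.\ \phi(x, \Vec{t})) \Rightarrow (\exists^{\st} x.\ \phi(x, \Vec{t})).
\end{equation*}

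With this in hand the argument is immediate. Assuming there is a unique $x$ with $\phi(x, \Vec{t})$, in particular $\exists x.\ \phi(x, \Vec{t})$ holds, so the existential transfer produces a standard set $s$ with $\phi(s, \Vec{t})$. By the uniqueness hypothesis, $s$ must coincide with the sole witness of $\phi(-, \Vec{t})$; hence that witness is standard, as required. Note the uniqueness hypothesis is used only at this last step, to identify the standard witness delivered by transfer with the unique solution.

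I expect the only point demanding care to be bookkeeping rather than mathematics: checking that $\neg\phi$ is genuinely internal (it introduces no occurrence of $\st$) and that its parameter tuple remains exactly the standard $\Vec{t}$, so that Axiom~\ref{ax:transfer} legitimately applies to it. One should also confirm that nothing more than plain existence $\exists x.\ \phi(x,\Vec{t})$ is fed into the dual transfer---the full strength of $\exists!$ is reserved for the concluding identification---so that the formula to which transfer is applied is the original internal $\phi$ with its standard parameters, and not a compound uniqueness statement.
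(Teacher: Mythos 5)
Your proof is correct and follows exactly the route the paper intends: the paper presents this proposition as ``an easy consequence of [Transfer's] contrapositive,'' which is precisely your derivation of existential transfer from Axiom~\ref{ax:transfer} applied to $\neg\phi$, followed by the identification of the standard witness with the unique solution. Nothing to add.
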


\begin{axiom}[Standardisation]
Given any formula $\phi$ in the language of IST and a standard set $A$, there is an unique standard subset $A^{\phi}$ of $A$ whose standard elements are precisely those standard elements of $A$ that satisfy $\phi$.
\label{ax:standardisation}
\end{axiom}

This axiom is necessary since those of ZFC can only be used to infer facts about internal formulae (they have nothing to say about the alien formulae that make use of the new predicate $\st$) --- for example, given a set $A$, the class $\{x \in A \colon \st(x)\}$ is not necessarily a set, but as long as $A$ is standard we have a legitimate approximation that is close enough, namely the standard subset $A^{\st}$ of $A$. Note that the axiom only stipulates what the \emph{standard} elements of $A^{\phi}$ must be: nothing is assumed about its nonstandard elements.

\begin{axiom}[Idealisation]
Given an internal formula $R(x,y)$ (that possibly includes nonstandard parameters $\Vec{t}$), the following two statements are equivalent:
\begin{enumerate}
    \item  For every finite standard set $B$ there is a set $y_B$ such that $R(x,y_B)$ for every element $x$ of $B$;
    \item There is a set $y$ such that $R(x,y)$ for every standard set $x$.
\end{enumerate}
\label{ax:idealisation}
\end{axiom}

This is Nelson's counterpart to saturation properties in Robinson's approach. This is used in nonstandard analysis to show the existence of infinitesimal real numbers (that is, nonzero real numbers smaller than any standard one) and other important facts. We single out just one for now, obtained from considering the case $R(x,y)$ is the formula $\neg (x=y)$: the standard sets that contain only standard elements are precisely the finite ones.

The resulting set theory is a conservative extension of ZFC \cite{nelson1977}, therefore any internal theorem of IST is provable in ZFC and we may still make use of ZFC theorems within IST. Combining them with the new principles, it is possible to write down legitimate proofs in the style of the old geometers (and young physicists) --- examples can be found in \cite{nelson1977}.\\


We finish this section with a few remarks on categorical logic.\\

By a \emph{hyperdoctrine} $P$ over a category $\mathcal{C}$ with finite limits (its \emph{category of types}), we mean a $\mathcal{C}$-indexed Heyting pre-algebra such that:
\begin{itemize}
    \item  For each $f\colon A \to B$ in $\mathcal{C}$, the functor $P(B) \xrightarrow{P(f)} P(A)$ of preorders preserves implication and has both a left adjoint $\exists(f)$ and a right adjoint $\forall(f)$;
    \item Such adjoints satisfy the Beck-Chevalley condition: given a pullback square $\begin{tikzcd}
X \arrow[dr, phantom, "\lrcorner", very near start]
 \arrow[]{r}{k} \arrow[swap]{d}{h}& A \arrow[]{d}{f} \\
B \arrow[swap]{r}{g}& Y
\end{tikzcd}$ in $\mathcal{C}$, there are isomorphisms 
\begin{equation*}
  {P(f) \circ \exists(g) \cong \exists(k) \circ P(h)} \ \text{ and }\  {P(f) \circ \forall(g) \cong \forall(k) \circ P(h)}.  
\end{equation*}
\end{itemize}
By a \emph{Heyting transformation} between hyperdoctrines $P$ and $Q$ over $\mathcal{C}$, we mean a $\mathcal{C}$-indexed functor $\alpha \colon P \to Q$ that is a morphism of Heyting pre-algebras componentwise, and that commutes with quantification.

A hyperdoctrine is a \emph{tripos} if it admits a generic predicate: there is an object $\Sigma \in \mathcal{C}$ and predicate $\sigma \in P(\Sigma)$ such that, for all $\phi \in P(A)$ there is a map $\chi_{\phi}\colon A \to \Sigma$ in $\mathcal{C}$ with $\phi \cong P(\chi_{\phi})(\sigma)$. Although we won't resort to it directly, it is worth noting that every tripos induces a topos via the \emph{tripos-to-topos construction} (see \cite{hyland_johnstone_pitts_1980} or \cite{van2008realizability} for details). Most importantly for us, one can soundly interpret intuitionistic first-order logic within a hyperdoctrine, and intuitionistic higher-order logic in any tripos \cite{van2008realizability}.

As a final remark, note that if $F \colon \mathcal{C} \to \mathcal{D}$ preserves pullbacks and $P$ is a hyperdoctrine over $\mathcal{D}$, then the $\mathcal{C}$-indexed category $F^*(P)$ given by $F^*(P)(A)\coloneqq P(F(A))$ and $F^*(P)(f)\coloneqq P(F(f))$ for each object $A$ and morphism $f\colon A \to B$ in $\mathcal{C}$ is a hyperdoctrine over $\mathcal{C}$. We refer to it as the \emph{reindexing} of $P$ along $F$.\\

\section{Internal Set Theory in a topos}
\label{sec:NelsonToposes}
This section introduces sufficient conditions for an indexed family of predicates to behave as a `class of standard elements', and provides faithful translations of the original Nelson axioms to such contexts. We then show that internal set theory can be carried out internal to a topos satisfying the topos-theoretic version of Nelson's axioms.

The first step is to nail down the notions of standard, internal, and external predicates for a topos, for which it is helpful to make a categorical analysis of the classical (set-theoretic) models of IST. We fix a model $\mathbb{U}$ of Internal Set Theory in what follows.

\begin{proposition}
A finite cartesian product of standard sets in $\mathbb{U}$ is standard, and the associated projection functions $\pi_j \colon \prod_{i=1}^n A_i \to A_j$ are standard maps.
\label{prop:StProducts}
\end{proposition}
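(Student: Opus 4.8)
The plan is to lean on the \emph{Uniqueness principle} stated above (equivalently, the contrapositive of Transfer): any set singled out as the unique solution of an internal formula whose parameters are all standard must itself be standard. Every object in sight here---the product and each of its projections---is canonically determined by the factors $A_1, \dots, A_n$, so the whole statement should reduce to verifying that these characterisations are (i) expressible without the predicate $\st$, and (ii) carry only standard parameters.

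First I would dispatch the binary case and then induct on the arity $n$, which is a fixed (standard) natural number. For standard $A$ and $B$, consider the internal formula $\phi(x, A, B)$ asserting that ``$x = A \times B$'', i.e. $\forall z.\,(z \in x \iff \exists a.\,\exists b.\,(a \in A \wedge b \in B \wedge z = \langle a, b\rangle))$, where $\langle a, b\rangle$ denotes the Kuratowski pair. This formula is internal, its parameters $A$ and $B$ are standard, and by extensionality there is a unique $x$ satisfying it; the Uniqueness principle then forces $A \times B$ to be standard. The base case $n = 1$ is trivial, and the inductive step applies the binary result to $\left(\prod_{i=1}^{n-1} A_i\right) \times A_n$, using the inductive hypothesis that the first factor is standard.

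For the projections I would argue in exactly the same spirit, now treating the already-established standard set $\prod_{i=1}^n A_i$ as a further standard parameter. Viewed as its graph, $\pi_j$ is the unique function $f$ satisfying the internal formula stating that $f$ has domain $\prod_{i=1}^n A_i$ and sends each tuple to its $j$-th coordinate; since all parameters are standard, the Uniqueness principle again yields that $\pi_j$ is a standard set, i.e. a standard map.

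I expect no genuine obstacle here, only bookkeeping: one must fix an internal coding of ordered tuples (iterated Kuratowski pairs, or functions out of $\{1, \dots, n\}$) so that ``being the product'' and ``being the $j$-th projection'' are genuinely $\st$-free, and confirm the relevant uniqueness via (functional) extensionality in ZFC. The one point worth stating explicitly is that $n$ and each $\{1, \dots, n\}$ are standard---$n$ is a fixed finite number and $\{1, \dots, n\}$ is itself uniquely determined from $n$ by an internal formula---so that neither the induction nor the tuple coding quietly introduces a nonstandard parameter into the formulae to which we apply Transfer.
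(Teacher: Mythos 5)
Your proposal is correct and follows essentially the same route as the paper: both reduce everything to the Uniqueness Principle applied to the internal, standard-parameter formulae characterising $A \times B$ (via Kuratowski pairs) and the graph of each projection, with the paper writing out only the binary case ``for simplicity'' where you make the induction on $n$ and the tuple-coding bookkeeping explicit. The only detail the paper includes that you omit is the $n=0$ case (standardness of $\emptyset$ and $1=\{0\}$), which is handled by the same argument.
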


\begin{proof}
Note $0\coloneqq \emptyset$ is standard by the Uniqueness Principle, as Extensionality implies it is the only set satisfying the internal formula $\forall x\ \neg(x \in z)$ without parameters. Similarly, $1=\{0\}$ is uniquely defined by an internal formula with the standard set $0$ as its only parameter, hence is standard --- this means an empty product is standard\footnote{More generally, every finite von Neumann ordinal is standard.}.

For simplicity, we will write the argument for binary products. Let $A$ and $B$ be standard sets. As $A \times B$ is the unique set satisfying the internal formula
\begin{equation*}
    \mathbf{prod}_{A,B}(z)= x \in z \iff (\exists a\ \exists b\ (a \in A \land b \in B \land z =\langle a, b \rangle))
\end{equation*}
with standard parameters $A$ and $B$, it has to be standard (here $\langle a, b \rangle$ denotes the Kuratowski pair $ \{ \{a\}, \{a,b\}\}$). The projection $\pi_A\colon A \times B \to A$ is then standard, as it is the only set satisfying the internal formula
\begin{equation*}
    \mathbf{fun}_{A\times B, A}(z) \land \forall a,a' \in A\ \forall b \in B\ \langle \langle a,b\rangle, a'\rangle \in z \Leftrightarrow a=a')
\end{equation*}
with free variable $z$ and standard parameters $A$ and $B$, where $\mathbf{fun}_{A\times B, A}(z)$ is the internal formula that formalises the statement that the set $z$ is a function $A \times B \to A$.
\end{proof}

\begin{corollary}
There is a category $\mathbf{Set}^{\st}_{\mathbb{U}}$ whose objects are the standard sets of $\mathbb{U}$ and whose morphisms are the standard functions between them.
\end{corollary}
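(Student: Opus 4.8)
The plan is to realise $\mathbf{Set}^{\st}_{\mathbb{U}}$ as a subcategory of the category $\mathbf{Set}_{\mathbb{U}}$ of all sets and all functions of $\mathbb{U}$ (which is a category precisely because $\mathbb{U}$ models ZFC). Associativity of composition and the unit laws then hold automatically, being inherited verbatim from $\mathbf{Set}_{\mathbb{U}}$; so the entire content reduces to two closure properties: that the identity on a standard set is a standard function, and that the composite of two standard functions is again standard. Both are instances of the Uniqueness Principle, in direct analogy with Proposition~\ref{prop:StProducts}.

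First I would dispatch the identities. For a standard set $A$, the function $\mathrm{id}_A$ is the unique set $z$ satisfying the internal formula
$$\mathbf{fun}_{A,A}(z) \land \forall a \in A\ (\langle a, a \rangle \in z),$$
whose sole parameter is the standard set $A$. By the Uniqueness Principle, $\mathrm{id}_A$ is standard, so every identity of $\mathbf{Set}_{\mathbb{U}}$ at a standard object belongs to $\mathbf{Set}^{\st}_{\mathbb{U}}$.

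Next I would treat composition. Given standard functions $f \colon A \to B$ and $g \colon B \to C$ between standard sets, the composite $g \circ f$ is the unique set $z$ obeying the internal formula
$$\mathbf{fun}_{A,C}(z) \land \forall a \in A\ \forall c \in C\ \big(\langle a, c \rangle \in z \iff \exists b \in B\ (\langle a, b \rangle \in f \land \langle b, c \rangle \in g)\big),$$
all of whose parameters $A, B, C, f, g$ are standard by hypothesis. The Uniqueness Principle again yields that $g \circ f$ is standard, so $\mathbf{Set}^{\st}_{\mathbb{U}}$ is closed under composition.

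I do not anticipate a genuine obstacle: once each structural operation of the ambient category is seen to be pinned down by an internal formula with standard parameters, the Uniqueness Principle does all the work. The only points meriting attention are that these defining formulas are truly internal (they never mention $\st$) and that uniqueness really holds --- both of which are clear, since a function is determined by its graph and, as $\mathbf{fun}_{X,Y}$ already records the codomain $Y$, the clauses above single out exactly $\mathrm{id}_A$ and $g \circ f$ with their intended domains and codomains.
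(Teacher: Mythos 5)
Your proof is correct and follows essentially the same route as the paper: both reduce the claim to closure under identities and composition, and both establish these via the Uniqueness Principle applied to an internal formula with standard parameters defining the composite (the paper handles identities by appeal to Proposition~\ref{prop:StProducts} rather than an explicit formula, but this is an inessential difference).
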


\begin{proof}
The sets and functions of $\mathbb{U}$ form a category by virtue of $\mathbb{U}$ being, in particular, a model of ZFC. Moreover, it is immediate from Proposition~\ref{prop:StProducts} that the identity map on a standard set $A$ is standard. Together with the Uniqueness Principle, we can show that standard functions compose: if $f\colon A \to B$ and $g\colon B \to C$ are maps in $\mathbf{Set}^{\st}_{\mathbb{U}}$, then the composite $g\circ f$ is uniquely defined by the internal formula
\begin{equation*}
    \forall x\ \left( x \in z \Leftrightarrow \exists (a,c) \in A \times C\  (x=\langle a,c\rangle \land \exists b \in B\ [\langle b,c\rangle \in g \land \langle a,b\rangle \in f])\right).
\end{equation*} 
with free variable $z$ and standard parameters $A \times C, B, f, g$, and the projection maps, thus standard. 
\end{proof}

\begin{lemma}
The category $\mathbf{Set}^{\st}_{\mathbb{U}}$ has all finite limits and the inclusion functor $\iota\colon \mathbf{Set}^{\st}_{\mathbb{U}} \hookrightarrow \mathbf{Set}_{\mathbb{U}}$ creates them.
\end{lemma}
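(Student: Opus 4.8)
The plan is to exploit the fact that $\mathbf{Set}_{\mathbb{U}}$, being a model of ZFC, already has all finite limits, so the real content is that a finite limit of a diagram of standard sets and standard maps, as computed internally to $\mathbb{U}$, is again a standard set equipped with standard projections, and that the inclusion is rigid enough to make "creation" automatic. The engine throughout is the Uniqueness Principle: any set or function that is pinned down \emph{up to equality} by an internal formula with standard parameters must be standard. I would build every finite limit as a subset of a finite product cut out by finitely many equations, so that each such formula is manifestly internal with standard parameters.

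Concretely, the terminal object and finite products are already available: the proof of Proposition~\ref{prop:StProducts} shows $1=\{0\}$ is standard, and that proposition gives that finite products of standard sets are standard with standard projections. It remains to handle equalizers. Given standard maps $f,g\colon A \to B$, the equalizer $E=\{a \in A : f(a)=g(a)\}$ is the unique set $z$ satisfying the internal formula $\forall x\,\bigl(x \in z \iff (x \in A \land \exists b\,(\langle x,b\rangle \in f \land \langle x,b\rangle \in g))\bigr)$, whose parameters $A,f,g$ are standard; by the Uniqueness Principle $E$ is standard, and the same reasoning applied to the graph of the inclusion $E \hookrightarrow A$ makes the equalizing map standard. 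More generally, for any finite diagram $D\colon J \to \mathbf{Set}^{\st}_{\mathbb{U}}$ the limit is the subset of the standard product $\prod_{j} D(j)$ consisting of those families $(x_j)_j$ with $D(\alpha)(x_j)=x_{j'}$ for each of the finitely many $\alpha\colon j \to j'$ in $J$; this is again an internal formula with standard parameters, so $\lim D$ is standard and its projections, being restrictions of the standard product projections, are standard as well.

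For creation, I would use that $\iota$ is injective on objects and faithful. Once $\lim \iota D$ is known to be a standard set carrying standard projections $\lambda_j$, the limit cone lies in $\mathbf{Set}^{\st}_{\mathbb{U}}$ and is the \emph{unique} preimage of the cone under $\iota$. To see it is a limit there, take any standard cone $(C,(c_j))$; the mediating function $h\colon C \to \lim \iota D$ exists and is unique already in $\mathbf{Set}_{\mathbb{U}}$, and since it is the unique solution of the internal formula asserting that $h$ is a function $C \to \lim \iota D$ with $\lambda_j \circ h = c_j$ for the finitely many $j$ — with standard parameters $C$, $\lim \iota D$, the $c_j$ and the $\lambda_j$ — it is standard by the Uniqueness Principle. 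Uniqueness among standard maps is inherited from uniqueness in $\mathbf{Set}_{\mathbb{U}}$. Thus $\iota$ creates finite limits.

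The main obstacle is conceptual rather than computational: reconciling the categorical notion of \emph{creating limits} with the Uniqueness Principle, which only speaks about sets and maps that are uniquely determined as equalities by internal formulae, not about objects characterised merely up to isomorphism. The point that dissolves this is that each ingredient of the limit — the limit object realised as a concrete subset of a product, every projection, and every mediating morphism — is literally the unique solution of an internal formula whose parameters are the standard data of the diagram; the remaining verifications are bookkeeping made routine by the injectivity and faithfulness of $\iota$.
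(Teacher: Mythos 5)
Your proposal is correct and follows essentially the same route as the paper: both reduce everything to the Uniqueness Principle applied to the internal formulae (with standard parameters) that pin down the limit object, its projections, and each mediating morphism, with the only cosmetic difference being that the paper reduces to products plus equalisers while you also sketch the general finite limit directly as an equationally-defined subset of a standard product. The creation step via injectivity and faithfulness of $\iota$ matches the paper's reasoning as well.
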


\begin{proof}
It is enough to show that it has finite products and equalisers, and that $\iota$ creates them.

Given $f,g\colon A \to B$ in $\mathbf{Set}^{\st}_{\mathbb{U}}$, we note that the formula 
\begin{equation*}
    E_{f,g}(x) \coloneqq x \in A \land (f(x)=g(x))
\end{equation*}
is internal with standard parameters $A, f, g$, and that the inclusion of the subset of $A$ obtained from it by Separation is an equalizer $E \xrightarrow{e} A$ of $f$ and $g$ in $\mathbf{Set}_{\mathbb{U}}$. By Extensionality it is the unique set satisfying the internal formula $\psi(z)= \forall x\ (x \in z \Leftrightarrow E_{f,g}(x))$ with standard parameters $A, f, g$, hence is standard.

Similarly, the inclusion map $E \xrightarrow{e} A$ is standard. Suppose $E' \xrightarrow{e'} A$ is standard, and $f \circ e' = g \circ e'$. By the universal property of the equaliser in $\mathbf{Set}_{\mathbb{U}}$, there is a unique function $\gamma \colon E' \to E$ with $e \circ \gamma=e'$. But then  $\gamma$ is the only set satisfying the internal formula
\begin{equation*}
    \textbf{fun}_{E',E}(z) \land (\forall \langle x, y\rangle \in E'\times E)[\langle x, y\rangle \in z \Leftrightarrow \forall a \in A ( \langle y, z\rangle \in e \Leftrightarrow \langle x, z\rangle\in e')]
\end{equation*}
with free variable $z$ and standard parameters $E'\times E, A, e, e'$. By the Uniqueness Principle $\gamma$ is standard, thus $E \xrightarrow{e} A$ is an equaliser of $f$ and $g$ in $\mathbf{Set}^{\st}_{\mathbb{U}}$.

For finite products, we augment Proposition~\ref{prop:StProducts} by showing that the maps induced by the universal property in $\mathbf{Set}_{\mathbb{U}}$ of the cartesian product of standard sets is standard, which makes such a cartesian product a categorical product in $\mathbf{Set}^{\st}_{\mathbb{U}}$ (as the inclusion $\iota\colon \mathbf{Set}^{\st}_{\mathbb{U}} \hookrightarrow \mathbf{Set}_{\mathbb{U}}$ is obviously faithful).

First note that if $A$ is a standard set, then uniqueness of the function $A \to 1$ makes it standard, hence $1=\{0\}$ is a terminal object in $\mathbf{Set}^{\st}_{\mathbb{U}}$. Now let $A$, $B$ and $C$ be standard sets, and suppose there are standard maps $C \to A$ and $C \to B$. Then the unique function $C \to A \times B$ that exists by force of the universal property of $A \times B$ in $\mathbf{Set}_{\mathbb{U}}$ has to be standard, as the only function satisfying some internal formula with standard parameters. 

We conclude $\mathbf{Set}^{\st}_{\mathbb{U}}$ has finite products and equalisers, hence finite limits. Moreover, by construction, the inclusion $\iota \colon \mathbf{Set}^{\st}_{\mathbb{U}} \hookrightarrow \mathbf{Set}_{\mathbb{U}}$ preserves (and in fact creates) them.\\
\end{proof}

We remark that the uniqueness of the map induced by the universal property of the considered limit is essential for the argument to carry through --- it is not clear that, say, a weak product of standard objects has to be standard.\\

Similar ideas provide the next result, which will serve as inspiration for the axiomatics that follows:

\begin{theorem}
 The category $\mathbf{Set}^{\st}_{\mathbb{U}}$ is a topos and the inclusion functor $\iota\colon \mathbf{Set}^{\st}_{\mathbb{U}} \hookrightarrow \mathbf{Set}_{\mathbb{U}}$ is conservative and logical.
 \label{thm:StandardSetsLogicalEmbedding}
\end{theorem}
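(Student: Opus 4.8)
The plan is to leverage the previous Lemma, which already equips $\mathbf{Set}^{\st}_{\mathbb{U}}$ with finite limits and shows $\iota$ creates them, and then to supply the two remaining pieces of topos structure --- exponentials and a subobject classifier --- by the mechanism used throughout: whenever an object or map is the \emph{unique} solution of an internal formula with standard parameters, the Uniqueness Principle forces it to be standard. Since all of these structures will be the literal restrictions of their counterparts in $\mathbf{Set}_{\mathbb{U}}$, logicality of $\iota$ will come for free, and conservativity will follow from the standardness of inverses.

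First I would handle exponentials. Given standard sets $A$ and $B$, the function set $B^A$ is the unique set picked out by an internal formula $\mathbf{exp}_{A,B}(z)$ (expressing ``$z$ is the set of all functions $A \to B$'') with standard parameters $A, B$, hence is standard; likewise the evaluation map $\ev\colon B^A \times A \to B$ is standard by uniqueness. For the universal property \emph{within} $\mathbf{Set}^{\st}_{\mathbb{U}}$, I would observe that for any standard $g\colon C \times A \to B$ the transpose $\tilde g \colon C \to B^A$ is again the only function satisfying an internal formula whose parameters ($C, A, B, g, \ev$, and the relevant products) are all standard, so $\tilde g$ is standard. Thus $B^A$ with its evaluation map is an exponential in $\mathbf{Set}^{\st}_{\mathbb{U}}$; as it coincides with the exponential computed in $\mathbf{Set}_{\mathbb{U}}$, the functor $\iota$ preserves it.

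Next I would produce the subobject classifier. The object $\Omega = 2 = \{0,1\}$ is standard (a finite von Neumann ordinal, as in the footnote to Proposition~\ref{prop:StProducts}) and $\mathrm{true}\colon 1 \to \Omega$ is standard by uniqueness. For a standard mono $m\colon S \hookrightarrow A$, its characteristic map $\chi_m\colon A \to \Omega$ in $\mathbf{Set}_{\mathbb{U}}$ is the unique map fitting into the classifying pullback, and it is cut out by an internal formula with standard parameters $A, S, m$, hence is standard. Uniqueness of $\chi_m$ within $\mathbf{Set}^{\st}_{\mathbb{U}}$ is inherited from $\mathbf{Set}_{\mathbb{U}}$ together with faithfulness of $\iota$, while the defining square is a pullback of standard objects and so is one in $\mathbf{Set}^{\st}_{\mathbb{U}}$ by the previous Lemma. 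Therefore $\Omega$ classifies subobjects in $\mathbf{Set}^{\st}_{\mathbb{U}}$ and $\iota$ preserves the classifier on the nose.

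With finite limits, exponentials, and a subobject classifier in hand, $\mathbf{Set}^{\st}_{\mathbb{U}}$ is a topos; and since $\iota$ preserves all three (limits by the previous Lemma, the others by construction), it is logical. For conservativity I would note that if a standard $f\colon A \to B$ is an isomorphism in $\mathbf{Set}_{\mathbb{U}}$, then its two-sided inverse $f^{-1}$ is the unique function satisfying an internal formula with standard parameters $A, B, f$, hence is standard, so $f$ is already invertible in $\mathbf{Set}^{\st}_{\mathbb{U}}$. The one recurring point demanding care --- the main obstacle --- is the bookkeeping required to verify, in each case, that the map delivered by the relevant universal property really is singled out by an internal formula \emph{all} of whose parameters are standard; once that is checked, the Uniqueness Principle does the rest.
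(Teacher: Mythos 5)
Your proof is correct and runs on the same engine as the paper's --- every piece of structure is the unique solution of an internal formula with standard parameters, hence standard by the Uniqueness Principle --- but the decomposition differs in two places. For the topos structure, the paper verifies the power-object axiom directly: it shows that $\mathcal{P}(A)$, the membership relation $\in_A$, and the classifying map $\chi_R \colon B \to \mathcal{P}(A)$ of a standard mono $R \rightarrowtail A \times B$ are all standard, then uses the fact that $\iota$ creates pullbacks to transport the universal property back to $\mathbf{Set}^{\st}_{\mathbb{U}}$. You instead supply exponentials and a subobject classifier separately; this is an equivalent characterisation of an elementary topos and the bookkeeping is essentially identical (indeed $\mathcal{P}(A) \cong \Omega^A$), though your route leaves implicit one check the paper makes explicit: that a monomorphism in $\mathbf{Set}^{\st}_{\mathbb{U}}$ is still monic in $\mathbf{Set}_{\mathbb{U}}$ (which holds because $\iota$ preserves pullbacks), without which the classifying map need not exist downstairs. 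The more substantive divergence is conservativity. You prove the textbook statement --- if $\iota(f)$ is invertible, then $f^{-1}$ is the unique solution of an internal formula with standard parameters $A$, $B$, $f$, hence standard, so $f$ is already invertible in $\mathbf{Set}^{\st}_{\mathbb{U}}$ --- whereas the paper shows that any two standard sets admitting \emph{some} bijection admit a \emph{standard} one, via the existential form of Transfer. Your version reflects invertibility of a given map and needs only the Uniqueness Principle; the paper's reflects isomorphy of objects and needs existential Transfer. Both are legitimate, and in either approach it is full (not merely $\Delta_0$) Transfer doing the work, consistent with the Observation following the theorem.
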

\begin{proof}

We show that $\mathbf{Set}^{\st}_{\mathbb{U}}$ has power objects, which are built as in $\mathbf{Set}_{\mathbb{U}}$.

This is the case once more by the Uniqueness Principle, as for any standard set $A$ the power set $\mathcal{P}(A)$ is the unique set satisfying the internal formula $\forall x (x \in z \iff x \subseteq A)$ with standard parameter $A$, hence is standard. Similarly, $\in_A \coloneqq \{\langle a, S\rangle \in A \times \mathcal{P}(A) \colon a \in S\}$ and its inclusion map into $A \times \mathcal{P}(A)$ are standard, as the membership predicate is internal and we now know that $A \times \mathcal{P}(A)$ is standard. 

If $R \rightarrowtail A \times B$ is a monomorphism in $\mathbf{Set}^{\st}_{\mathbb{U}}$, then it is also a monomorphism in $\mathbf{Set}_{\mathbb{U}}$ by virtue of $\iota$ preserving pullbacks. By the universal property of the power set in $\mathbf{Set}_{\mathbb{U}}$, there is a unique function $\chi_R\colon B \to \mathcal{P}(A)$ such that 
\begin{equation}
    \begin{tikzcd}
      R \arrow[rightarrowtail, swap]{d}{r} \arrow[]{r}{\lambda}& \in_A \arrow[rightarrowtail]{d}{}\\
      A\times B \arrow[swap]{r}{1_A \times \chi_R} & A \times \mathcal{P}(A)
    \end{tikzcd}
    \label{eq:4.1}
\end{equation}
is a pullback in $\mathbf{Set}_{\mathbb{U}}$. This means $\chi_R$ is the only set satisfying the internal formula
\begin{align*}
    \mathbf{fun}_{B,\mathcal{P}(A)}(z) \land [(1_A \times \chi_R) \circ r=\in_A \circ \lambda] \land \forall L,l,h (\textbf{mono}(l)_{L, A \times B} \land \mathbf{fun}_{L,\in_A}(h) \\
    \land [(1_A \times \chi_R) \circ l = \in_A \circ h] \Rightarrow \exists!\gamma(\mathbf{fun}_{L,R}(\gamma) \land [r\circ \gamma=l] \land [\lambda \circ \gamma=h]),
\end{align*}
 where $\mathbf{mono}_{L, A\times B}$ is the internal formula stating that $l$ is an injective map $L \to A \times B$. This formula has standard parameters, hence $\chi_R$ is standard.
 
 As the inclusion $\iota \colon \mathbf{Set}^{\st}_{\mathbb{U}} \hookrightarrow \mathbf{Set}$ creates pullbacks, Diagram~\ref{eq:4.1} is a pullback in $\mathbf{Set}^{\st}_{\mathbb{U}}$, exhibiting $(\mathcal{P}(A), \in_A)$ as a power object of $A$ and establishing that $\mathbf{Set}^{\st}_{\mathbb{U}}$ is a topos and the functor $\iota$ is logical (by construction). 
 
 To see that it is also conservative, suppose there is a bijection between standard sets $A$ and $B$. Then the internal formula
 \begin{equation*}
     \mathbf{fun}_{A,B}(f)\land [\forall\ x \in A\ \exists !\ y \in B\ (x,y) \in f]
 \end{equation*}
 with free variable $f$ and standard parameters $A$ and $B$ is satisfiable. By the existential form of Transfer, there is standard $f$ satisfying this formula, which is then an isomorphism in $\mathbf{Set}^{\st}_{\mathbb{U}}$.
\end{proof}
\begin{observation*}
The proof of Theorem~\ref{thm:StandardSetsLogicalEmbedding} did not use Idealisation nor Standardisation, only Transfer and ZFC theorems. However, full Transfer (and not just $\Delta_0$-Transfer, the bounded form of it to be employed later) was needed in order to establish preservation of power objects. Moreover, the functor $\iota$ does not exhibit $\mathbf{Set}^{\st}_{\mathbb{U}}$ as a subtopos of $\mathbf{Set}_{\mathbb{U}}$, else it would be an equivalence (as a logical direct image of a geometric morphism) --- but $\mathbf{Set}^{\st}_{\mathbb{U}}$ doesn't necessarily have all $\mathbf{Set}_{\mathbb{U}}$-indexed colimits (e.g., a coproduct indexed by a nonstandard set).
\end{observation*}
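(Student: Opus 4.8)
The plan is to dispatch the observation's three assertions in turn, each resting on Theorem~\ref{thm:StandardSetsLogicalEmbedding}. The first --- that only Transfer and theorems of ZFC are used --- I would settle by inspection: every object declared standard in that proof ($\mathcal{P}(A)$, the membership relation $\in_A$, and the classifying map $\chi_R$) is certified through the Uniqueness Principle, itself a corollary of Transfer (Axiom~\ref{ax:transfer}), while conservativity invokes only the existential form of Transfer; at no point is a standardised subset $A^{\phi}$ formed or an idealisation premise discharged, and every ambient fact used (existence of power sets, the Kuratowski encoding of the classifying pullback, uniqueness of $\chi_R$) is a ZFC theorem. For the second assertion I would locate precisely where unbounded Transfer is consumed: $\mathcal{P}(A)$ is shown standard by applying the Uniqueness Principle to $\forall x\,(x \in z \iff x \subseteq A)$, and although $x \subseteq A$ abbreviates the bounded $\forall y\,(y \in x \Rightarrow y \in A)$, the leading quantifier $\forall x$ is bounded by no standard set, so the formula is not $\Delta_0$; hence this step falls outside $\Delta_0$-Transfer, which is all the claim asserts. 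I would flag that strengthening ``was needed'' into a provable impossibility --- a model of $\Delta_0$-Transfer in which some $\mathbf{Set}^{\st}_{\mathbb{U}}$ lacks power objects --- is a strictly stronger statement not required here, and I would not pursue it.

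The third assertion carries the real content, and I would argue it by contradiction. Suppose $\iota$ did present $\mathbf{Set}^{\st}_{\mathbb{U}}$ as a subtopos, i.e. that $\iota$ is, up to isomorphism, the direct image $f_*$ of a geometric morphism $f$ with $f_*$ full and faithful. By Theorem~\ref{thm:StandardSetsLogicalEmbedding} this $f_*$ is moreover logical, and I would invoke the fact that a geometric embedding with logical direct image is an equivalence. The cleanest justification of that fact uses the correspondence between subtoposes of $\mathbf{Set}_{\mathbb{U}}$ and local operators $j$: the inclusion of $j$-sheaves is logical precisely when the classifier $\Omega_j \rightarrowtail \Omega$ is an isomorphism, that is, when $j$ is trivial, and then the embedding is an equivalence. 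Consequently $\iota$ would exhibit an equivalence $\mathbf{Set}^{\st}_{\mathbb{U}} \simeq \mathbf{Set}_{\mathbb{U}}$.

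To refute the existence of such an equivalence I would exhibit a colimit that $\mathbf{Set}_{\mathbb{U}}$ possesses but $\mathbf{Set}^{\st}_{\mathbb{U}}$ cannot. Since $\mathbf{Set}_{\mathbb{U}}$ is the category of sets of a model of ZFC it has coproducts indexed by every set, and an equivalence would force the same of $\mathbf{Set}^{\st}_{\mathbb{U}}$; but taking a nonstandard finite set $n$ --- one exists because, by the corollary to Idealisation noted after Axiom~\ref{ax:idealisation}, the infinite standard set $\mathbb{N}$ must contain a nonstandard element, and its predecessor witnesses that $n$ is itself a nonstandard set --- the coproduct $\coprod_{i \in n} 1$ computes in $\mathbf{Set}_{\mathbb{U}}$ to $n$ itself, whose cardinality is nonstandard. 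As the cardinality of any standard set is standard by the Uniqueness Principle, $n$ is isomorphic to no standard object, so this coproduct cannot lie in $\mathbf{Set}^{\st}_{\mathbb{U}}$; equivalently, $\iota$ fails to be essentially surjective. This contradiction shows $\iota$ is no subtopos inclusion.

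I expect the only genuine obstacle to be the topos-theoretic input in the third assertion --- correctly stating and justifying that a logical direct image forces an equivalence --- for which the local-operator argument above is the route I would take, with a citation to the standard literature as backup. The remaining work, auditing the earlier proof and the boundedness bookkeeping, is routine.
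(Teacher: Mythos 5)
Your proposal is correct and follows exactly the reasoning the paper's (unproved) observation gestures at: auditing Theorem~\ref{thm:StandardSetsLogicalEmbedding} for its axiom usage, locating the unbounded quantifier in the power-set formula $\forall x\,(x \in z \iff x \subseteq A)$, and refuting the subtopos hypothesis by combining the local-operator characterisation of logical sheaf inclusions with the failure of essential surjectivity at a nonstandard natural number. Your explicit caveat that ``was needed'' should be read as ``the given proof consumes a non-$\Delta_0$ instance of Transfer'' rather than as a formal independence claim is the right reading of the remark, and the rest of the fill-in is sound.
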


Our goal is to abstract away from this scenario by stipulating the properties categories should have in order to play the roles of `standard universe' and `internal universe' (classically $\mathbf{Set}^{\st}_{\mathbb{U}}$ and $\mathbf{Set}_{\mathbb{U}}$, respectively).\\

\subsection{Nelson structures}

We start by noting that the fact that the topos $\mathbf{Set}^{\st}_{\mathbb{U}}$ faithfully embeds into the topos of sets $\mathbf{Set}_{\mathbb{U}}$ via the logical functor $\iota$ is rather strong --- we generally only need first-order logic to be preserved, so the functor between the would-be standard universe and the internal universe need not be more than a faithful Heyting functor\footnote{In the examples to be considered in Section 3, however, the analogous inclusion will still be logical.}. In this light, there is no reason to request the standard universe to be a topos, but simply a Heyting category  (although we will assume the internal one to be so, as to exploit the richer internal logic).

We have not yet accounted for Standardisation and Idealisation. To do so (and to give a translation of Transfer that closely follows Nelson), we need extra structure that gives us access to a notion of `external formula' and of `standard element':

\begin{definition}
A Nelson structure is comprised of:
\begin{itemize}
    \item A topos $\mathcal{I}$ of internal objects and maps;
    \item A Heyting category $\mathbb{S}$ of standard objects and standard maps;
    \item A faithful Heyting functor $\iota \colon \mathbb{S} \hookrightarrow \mathcal{I}$;
    \item A hyperdoctrine $\mathfrak{X}$ over $\mathcal{I}$ of external predicates;
    \item A Heyting transformation of hyperdoctrines $[\cdot]\colon \Sub_{\mathcal{I}} \to \mathfrak{X}$;
    \item A family of predicates $\sigma_I \in \mathfrak{X}(I)$ for each object of $\mathcal{I}$, which restricts to a lax natural transformation $\sigma \colon 1 \to \mathfrak{X}^*\coloneqq \iota^*(\mathfrak{X})$ satisfying $\sigma_1\cong\top$. 
\end{itemize}  
\label{def:NelsonStructure}
\end{definition}

We picture the topos $\mathcal{I}$ as the ``internal universe", within which lies a ``standard universe" $\mathbb{S}$ as a subcategory. The hyperdoctrine $\mathfrak{X}$ is a \emph{nonstandard extension} that collects the ``external predicates'' about internal objects, with the transformation $[\cdot]\colon \Sub_{\mathcal{I}} \to \mathfrak{X}$ enabling us to see an internal property externally. It includes a special external predicate $\sigma_I$ for each internal object $I$, which plays the role of the external formula $\st(x)$.

As usual, the subobjects of some $I \in \mathcal{I}$ ought to be seen as predicates over $I$, which we will call \emph{internal predicates}. In the motivating example where $\mathcal{I}=\mathbf{Set}_{\mathbb{U}}$, such subobjects are in correspondence with the internal formulae of IST via the Axiom of Separation. Similarly, the subobjects of $A \in \mathbf{Set}^{\st}_{\mathbb{U}}$ correspond to the internal formulae \emph{with standard parameters} and are thus named \emph{standard predicates}: this is because the subset of a standard set $A$ obtained from such a formula by Separation has to be standard by the Uniqueness Principle and the Axiom of Extensionality and, conversely, if $S$ is a standard subset of $A$, then $x \in S$ is an internal formula with standard parameters. 

Thinking about the subobjects of standard objects as a replacement for internal formulae with standard parameters, we determine that $\mathbb{S}$ ought to be a Heyting category (if not necessarily a topos) --- note that internal formulae with standard parameters are closed under first-order logical connectives, justifying that assumption in Definition~\ref{def:NelsonStructure}.

Formulating Standardisation requires the existence of the hyperdoctrine $\mathfrak{X}$ of external predicates, which not only includes the internal ones via the Heyting transformation $[\cdot]\colon \Sub_{\mathcal{I}} \to \mathfrak{X}$, but also the standard predicates about standard objects through composition with the faithful Heyting functor $\iota$ applied at the level of subobjects.  Nelson's formulation of the Axiom of Standardisation only allows one to take the standardisation of an external formula relative to a standard set, so in an effort to replicate his formalism it is natural to reindex the hyperdoctrine $\mathfrak{X}$ over standard objects and get a new hyperdoctrine $\mathfrak{X}^*$ over $\mathbb{S}$.

The requirement for the predicates $\sigma_I$ to induce a lax transformation $\sigma\colon 1 \to \mathfrak{X}^*$ translates to the image of a standard element under a standard map (i.e., a morphism in the subcategory $\mathbb{S}$) being once again standard. The last condition that $\sigma_1 \cong \top$ in $\mathfrak{X}^*(1)$ (which is a mild consequence of Idealisation in classical IST) is a smallness restriction that stipulates that every element of a terminal object is standard; it will be used to show that our topos-theoretic versions of standardisation and transfer provide the usual ones internally.\\

As $\mathfrak{X}$ is a hyperdoctrine, it comes with its own quantifiers, which we see as ``external''.  We can define new quantifiers from them by employing the standardness predicates $\sigma_I$: for each internal map $I \xrightarrow{v} J$ in $\mathcal{I}$ and $\psi \in \mathfrak{X}(I)$ we have predicates $\forall^{\sigma}_v(\psi)\coloneqq \forall^{\mathfrak{X}}_v (\sigma_I \Rightarrow \psi)$ and $\exists^{\sigma}_v(\psi)\coloneqq \exists^{\mathfrak{X}}_v (\sigma_I\land \phi)$ in $\mathfrak{X}(J)$.\\

The maps $\psi \mapsto \forall^{\sigma}_v(\psi), \exists^{\sigma}_v(\psi)$ do not define functors as they are, but they do so when we replace the ordering on predicates with the \emph{standard elements order}: for $\psi_1,\psi_2 \in \mathfrak{X}(I)$, we write $\psi_1 \leq^{\st} \psi_2$ if $\psi_1 \land \sigma_I \leq \psi_2$, and write $\psi_1 \cong^{\st} \psi_2$ if $\psi_1 \leq^{\st} \psi_2$ and $\psi_1 \leq^{\st} \psi_2$.  This order is thus named since in the classical set-theoretic context $\phi \leq^{\st} \psi$ means that whenever $x_1, \cdots, x_n$ are standard and $\phi(x_1,\cdots, x_n)$ holds, then $\psi(x_1, \cdots, x_n)$ holds.\\

Clearly, $\psi_1 \leq \psi_2$ implies $\psi_1 \leq^{\st}\psi_2$, and logical connectives preserve the standard elements order:

\begin{proposition}
Let $a,b,c \in \mathfrak{X}^*(A)$. If $b \leq^{\st}c$, then 
\begin{enumerate}
    \item $(a \Rightarrow b) \leq^{\st} (a \Rightarrow c)$;
    \item $(a \lor b) \leq^{\st} (a \lor c)$.
    \item $(a \land b) \leq^{\st} (a \land c)$.
\end{enumerate}
\label{prop:ImplicationPreservesLeqSt}
\end{proposition}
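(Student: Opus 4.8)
The plan is to unfold the definition of the standard elements order and reduce each clause to a routine manipulation inside the Heyting pre-algebra $\mathfrak{X}^*(A)$, writing $\sigma_A$ for the standardness predicate over $A$. Recall that the hypothesis $b \leq^{\st} c$ means $b \land \sigma_A \leq c$, and that each conclusion $x \leq^{\st} y$ is by definition the plain inequality $x \land \sigma_A \leq y$. Throughout I would use only that $\mathfrak{X}^*(A)$ is a Heyting pre-algebra: that $\land$ is associative, commutative and monotone, that it distributes over $\lor$, and that implication is right adjoint to meet, so that $x \land a \leq b$ iff $x \leq (a \Rightarrow b)$, the counit of this adjunction giving modus ponens $(a \Rightarrow b) \land a \leq b$.

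For the conjunction clause (3), I would re-associate $(a \land b) \land \sigma_A \cong a \land (b \land \sigma_A)$ and apply monotonicity of $\land$ to the hypothesis $b \land \sigma_A \leq c$, obtaining $a \land (b \land \sigma_A) \leq a \land c$; this is exactly $(a \land b) \leq^{\st} (a \land c)$. For the disjunction clause (2), distributivity gives $(a \lor b) \land \sigma_A \cong (a \land \sigma_A) \lor (b \land \sigma_A)$, and since $a \land \sigma_A \leq a \leq a \lor c$ while $b \land \sigma_A \leq c \leq a \lor c$ (the middle step by hypothesis), the join of these two terms lies below $a \lor c$, which is $(a \lor b) \leq^{\st} (a \lor c)$.

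The implication clause (1) is the only one with genuine (if modest) content, and I expect it to be the main obstacle: unlike $\land$ and $\lor$, the connective $\Rightarrow$ does not let one simply redistribute the extra conjunct $\sigma_A$ against the hypothesis, so one must instead pass through the defining adjunction. Here I would transpose: to prove $(a \Rightarrow b) \land \sigma_A \leq (a \Rightarrow c)$ it suffices to prove $\bigl((a \Rightarrow b) \land \sigma_A\bigr) \land a \leq c$. Re-associating and applying modus ponens $(a \Rightarrow b) \land a \leq b$ under the remaining meet with $\sigma_A$, the left-hand side satisfies $(a \Rightarrow b) \land a \land \sigma_A \leq b \land \sigma_A$, which is $\leq c$ by hypothesis; transposing back yields clause (1). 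The point worth care is simply that $\sigma_A$ is carried along untouched as a spare conjunct while the adjunction and modus ponens do the real work, so that no property of $\sigma_A$ beyond its membership in the fibre $\mathfrak{X}^*(A)$ is required.
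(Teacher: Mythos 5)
Your proof is correct, and since the paper omits this proof as routine, your unfolding of $\leq^{\st}$ followed by standard Heyting pre-algebra manipulations (monotonicity of $\land$, distributivity over $\lor$, and transposition across the adjunction $-\land a \dashv (a \Rightarrow -)$ for the implication clause) is exactly the intended argument. Nothing is missing; in particular you correctly observe that no property of $\sigma_A$ is needed beyond its being an element of the fibre.
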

\begin{proof}
Omitted.
\end{proof}
This is also true of the new quantifiers:
\begin{proposition}
Let $v \colon I \to J$ be an internal map. Then  $\forall^{\sigma}_v \colon \mathfrak{X}(I) \to \mathfrak{X}(J)$ and $\exists^{\sigma}_v\colon \mathfrak{X}(I) \to \mathfrak{X}(J)$ preserve the standard elements order.
\end{proposition}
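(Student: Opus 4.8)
The plan is to reduce both claims to the monotonicity of the ambient hyperdoctrine quantifiers $\exists^{\mathfrak{X}}_v$ and $\forall^{\mathfrak{X}}_v$. These are order-preserving because $\mathfrak{X}$ is a hyperdoctrine, so they are the left and right adjoints of $\mathfrak{X}(v)$ and hence functors between the fibre preorders $\mathfrak{X}(I)$ and $\mathfrak{X}(J)$. First I would unpack the hypothesis: by definition $\psi_1 \leq^{\st} \psi_2$ means $\psi_1 \land \sigma_I \leq \psi_2$ in $\mathfrak{X}(I)$; meeting both sides with $\sigma_I$ (and using $\sigma_I \land \psi_2 \leq \psi_2$ for the converse) shows this is in fact equivalent to $\sigma_I \land \psi_1 \leq \sigma_I \land \psi_2$. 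Everything then takes place inside the two fibres and the adjunction $\exists^{\mathfrak{X}}_v \dashv \mathfrak{X}(v) \dashv \forall^{\mathfrak{X}}_v$, so no naturality of $\sigma$ or Beck--Chevalley input is required.

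For the existential quantifier the hypothesis in the form $\sigma_I \land \psi_1 \leq \sigma_I \land \psi_2$ is exactly what is needed: applying the monotone map $\exists^{\mathfrak{X}}_v$ gives $\exists^{\sigma}_v(\psi_1) = \exists^{\mathfrak{X}}_v(\sigma_I \land \psi_1) \leq \exists^{\mathfrak{X}}_v(\sigma_I \land \psi_2) = \exists^{\sigma}_v(\psi_2)$. For the universal quantifier, since $\forall^{\mathfrak{X}}_v$ is monotone it suffices to show that the operand is monotone, i.e. that $\sigma_I \Rightarrow \psi_1 \leq \sigma_I \Rightarrow \psi_2$. By the Heyting adjunction $(-\land \sigma_I) \dashv (\sigma_I \Rightarrow -)$ this amounts to $(\sigma_I \Rightarrow \psi_1) \land \sigma_I \leq \psi_2$, which I would obtain by combining the evaluation inequality $(\sigma_I \Rightarrow \psi_1) \land \sigma_I \leq \psi_1$ with $(\sigma_I \Rightarrow \psi_1) \land \sigma_I \leq \sigma_I$ to get $(\sigma_I \Rightarrow \psi_1) \land \sigma_I \leq \psi_1 \land \sigma_I$, and then invoking the hypothesis $\psi_1 \land \sigma_I \leq \psi_2$. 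Applying $\forall^{\mathfrak{X}}_v$ yields $\forall^{\sigma}_v(\psi_1) \leq \forall^{\sigma}_v(\psi_2)$.

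In both cases I actually obtain the stronger statement that the maps are monotone for the ordinary order on predicates, whence preservation of $\leq^{\st}$ follows immediately (recall that $\psi_1 \leq \psi_2$ implies $\psi_1 \leq^{\st} \psi_2$). I therefore expect no genuine obstacle here; the only point demanding care is the universal case, where the standardness predicate must be transported across the Heyting implication by currying together with the evaluation (modus ponens) inequality, rather than by the bare composition of monotone maps that settles the existential case.
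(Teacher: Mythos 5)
Your proof is correct. The existential half coincides with the paper's argument: both rewrite the hypothesis as $\sigma_I \land \psi_1 \leq \sigma_I \land \psi_2$ and apply the monotone map $\exists^{\mathfrak{X}}_v$, then weaken to $\leq^{\st}$. For the universal half you take a slightly different, and in fact cleaner, route: the paper transposes the target inequality $\forall^{\sigma}_v(\psi_1)\land\sigma_J\leq\forall^{\sigma}_v(\psi_2)$ across the adjunction $\mathfrak{X}(v)\dashv\forall^{\mathfrak{X}}_v$ and then combines the counit $\mathfrak{X}(v)\forall^{\mathfrak{X}}_v(\sigma_I\Rightarrow\psi_1)\leq(\sigma_I\Rightarrow\psi_1)$ with modus ponens, whereas you stay entirely inside the fibre $\mathfrak{X}(I)$, curry the hypothesis into the ordinary-order inequality $\sigma_I\Rightarrow\psi_1\leq\sigma_I\Rightarrow\psi_2$, and only then apply the monotone $\forall^{\mathfrak{X}}_v$. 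The two manipulations are equivalent uses of the same adjunction data, but yours delivers the marginally stronger conclusion $\forall^{\sigma}_v(\psi_1)\leq\forall^{\sigma}_v(\psi_2)$ in the ordinary order (matching what the existential argument already gives for $\exists^{\sigma}_v$), from which preservation of $\leq^{\st}$ follows by weakening; it also avoids invoking $\sigma_J$ or the fact that $\mathfrak{X}(v)$ preserves meets. You are right that no lax naturality of $\sigma$ or Beck--Chevalley input is needed in either version.
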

\begin{proof}
    If $\psi_1 \leq^{\st} \psi_2$ in $\mathfrak{X}(I)$ then $\psi_1 \land \sigma_I \leq \psi_2 \land \sigma_I$, so applying quantification in $\mathfrak{X}$ yields $\exists^{\mathfrak{X}}_v(\psi_1 \land \sigma_I) \leq \exists^{\mathfrak{X}}_v(\psi_2 \land \sigma_I)$. Thus 
\begin{equation*}
    \exists^{\sigma}_v(\psi_1) \land \sigma_J \leq \exists^{\sigma}_v(\psi_1) \leq \exists^{\sigma}_v(\psi_2). 
\end{equation*}
This establishes that $\exists^{\sigma}_v$ preserves the standard elements order.

For universal quantification, note that we have $\forall^{\sigma}_v(\psi_1) \land \sigma_J \leq \forall^{\sigma}_v(\psi_2)$ if, and only if, $\mathfrak{X}(v) \forall^{\mathfrak{X}}_v(\sigma_I \Rightarrow \psi_1) \land \mathfrak{X}(v)(\sigma_J) \land \sigma_I \leq \psi_2$ by taking adjoints. But $\mathfrak{X}(v)\forall^{\mathfrak{X}}_v(\sigma_I \Rightarrow \psi_1) \leq (\sigma_I \Rightarrow \psi_1)$ via the counit of the adjunction defining $\forall^{\mathfrak{X}}_v$. It follows that
 \begin{equation*}
     \mathfrak{X}(v) \forall^{\mathfrak{X}}_v(\sigma_I \Rightarrow \psi_1) \land \mathfrak{X}(v)(\sigma_J) \land \sigma_I\leq (\sigma_I \Rightarrow \psi_1) \land \sigma_I \land \mathfrak{X}(v)(\sigma_J) \leq \psi_1 \land \sigma_I.
 \end{equation*}
But $\psi_1 \leq^{\st} \psi_2$, so the desired inequality holds and $\forall_v^{\sigma}$ preserves the standard elements order. 
\end{proof}

Substitution along a \emph{standard} map between standard sets also preserves the standard elements order: given $\psi_1,\psi_2 \in \mathfrak{X}^*(B)$ with $\psi_1\leq^{\st}\psi_2$ and standard $f \colon A \to B$, we have that 
\begin{equation*}
    \sigma_A \land \mathfrak{X}^*(f)(\psi_1) \leq   \mathfrak{X}^*(f)(\sigma_B) \land \mathfrak{X}^*(f)(\psi_1) = \mathfrak{X}^*(f)(\sigma_B \land \psi_1) \leq \mathfrak{X}^*(f)(\psi_2)
\end{equation*}
by lax naturality of $\sigma$ with respect to $f$, i.e., $\mathfrak{X}^*(f)(\psi_1) \leq^{\st} \mathfrak{X}^*(f)(\psi_2)$. \\

In fact, if we work with standard maps only then the lax naturality condition on $\sigma$ can be used to obtain an adjoint triple: 

\begin{proposition}
Given $f\colon A \to B$ in $\mathbb{S}$ and using the standard elements ordering on predicates\footnote{We use $\dashv^{\st}$ to emphasise that this adjunction is between categories pre-ordered by the standard elements order.}, we have an adjoint triple $\exists_{\iota(f)}^{\sigma} \dashv^{\st} \mathfrak{X}^*(f) \dashv^{\st} \forall_{\iota(f)}^{\sigma}$.
\label{prop:SigmaAdjunction}
\end{proposition}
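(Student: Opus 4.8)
The plan is to verify the two adjunction biconditionals directly. Writing $v \coloneqq \iota(f)$, recall that $\mathfrak{X}^*(f) = \mathfrak{X}(\iota(f)) = \mathfrak{X}(v)$, and that all three maps $\exists^{\sigma}_v$, $\mathfrak{X}^*(f)$, $\forall^{\sigma}_v$ are already known to be monotone for the standard elements order (by the preceding proposition and the displayed computation just before the statement), so it suffices to establish the biconditionals $\exists^{\sigma}_v(\psi) \leq^{\st} \phi \Leftrightarrow \psi \leq^{\st} \mathfrak{X}^*(f)(\phi)$ and $\mathfrak{X}^*(f)(\phi) \leq^{\st} \psi \Leftrightarrow \phi \leq^{\st} \forall^{\sigma}_v(\psi)$. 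The two inputs I will lean on are: (i) the ordinary adjunctions $\exists^{\mathfrak{X}}_v \dashv \mathfrak{X}(v) \dashv \forall^{\mathfrak{X}}_v$ together with Frobenius reciprocity $\exists^{\mathfrak{X}}_v(\theta \land \mathfrak{X}(v)(\rho)) \cong \exists^{\mathfrak{X}}_v(\theta) \land \rho$, which holds in any hyperdoctrine because $\mathfrak{X}(v)$ preserves implication and meets (one derives it from the unit and counit of $\exists^{\mathfrak{X}}_v \dashv \mathfrak{X}(v)$ and the fibrewise Heyting adjunction $- \land (-) \dashv ((-) \Rightarrow (-))$); and (ii) the lax naturality of $\sigma$ along the \emph{standard} map $f$, namely $\sigma_{\iota(A)} \leq \mathfrak{X}(v)(\sigma_{\iota(B)})$.

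For the lower adjunction $\exists^{\sigma}_v \dashv^{\st} \mathfrak{X}^*(f)$, the crux is the observation that $\exists^{\mathfrak{X}}_v(\sigma_{\iota(A)} \land \psi) \leq \sigma_{\iota(B)}$ for every $\psi$. Indeed, Frobenius gives $\exists^{\mathfrak{X}}_v(\sigma_{\iota(A)} \land \psi) \land \sigma_{\iota(B)} \cong \exists^{\mathfrak{X}}_v(\sigma_{\iota(A)} \land \psi \land \mathfrak{X}(v)(\sigma_{\iota(B)}))$, and lax naturality collapses $\sigma_{\iota(A)} \land \mathfrak{X}(v)(\sigma_{\iota(B)})$ to $\sigma_{\iota(A)}$, so the right-hand side is just $\exists^{\sigma}_v(\psi)$. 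Consequently $\exists^{\sigma}_v(\psi) \leq^{\st} \phi$, i.e. $\exists^{\sigma}_v(\psi) \land \sigma_{\iota(B)} \leq \phi$, is equivalent to the plain inequality $\exists^{\sigma}_v(\psi) \leq \phi$, which by $\exists^{\mathfrak{X}}_v \dashv \mathfrak{X}(v)$ is equivalent to $\sigma_{\iota(A)} \land \psi \leq \mathfrak{X}(v)(\phi)$, that is, to $\psi \leq^{\st} \mathfrak{X}^*(f)(\phi)$. This chain of equivalences is exactly the desired biconditional.

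For the upper adjunction $\mathfrak{X}^*(f) \dashv^{\st} \forall^{\sigma}_v$ I would argue the two implications separately. In the forward direction, $\mathfrak{X}(v)(\phi) \land \sigma_{\iota(A)} \leq \psi$ is equivalent by the fibrewise Heyting adjunction to $\mathfrak{X}(v)(\phi) \leq (\sigma_{\iota(A)} \Rightarrow \psi)$, hence by $\mathfrak{X}(v) \dashv \forall^{\mathfrak{X}}_v$ to $\phi \leq \forall^{\mathfrak{X}}_v(\sigma_{\iota(A)} \Rightarrow \psi) = \forall^{\sigma}_v(\psi)$, which a fortiori gives $\phi \leq^{\st} \forall^{\sigma}_v(\psi)$. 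For the converse, starting from $\phi \land \sigma_{\iota(B)} \leq \forall^{\sigma}_v(\psi)$ I apply $\mathfrak{X}(v)$ and invoke the counit $\mathfrak{X}(v)\forall^{\mathfrak{X}}_v(\sigma_{\iota(A)} \Rightarrow \psi) \leq (\sigma_{\iota(A)} \Rightarrow \psi)$ to obtain $\mathfrak{X}(v)(\phi) \land \mathfrak{X}(v)(\sigma_{\iota(B)}) \leq (\sigma_{\iota(A)} \Rightarrow \psi)$; meeting with $\sigma_{\iota(A)}$, discharging the implication, and once more using lax naturality to replace $\sigma_{\iota(A)} \land \mathfrak{X}(v)(\sigma_{\iota(B)})$ by $\sigma_{\iota(A)}$ yields $\mathfrak{X}(v)(\phi) \land \sigma_{\iota(A)} \leq \psi$, i.e. $\mathfrak{X}^*(f)(\phi) \leq^{\st} \psi$.

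I expect the main difficulty to be bookkeeping rather than conceptual: since $\leq^{\st}$ is only a preorder and the predicates live over two distinct objects, one must track carefully at each step whether the relevant standardness predicate is $\sigma_{\iota(A)}$ or $\sigma_{\iota(B)}$, and resist symmetrising the argument. The one genuinely structural ingredient is the lax naturality of $\sigma$ along $f$ — it is precisely this hypothesis, available for $v = \iota(f)$ with $f$ standard but not for arbitrary internal $v$, that allows Frobenius to absorb the stray $\sigma_{\iota(B)}$ in the existential case and the stray $\mathfrak{X}(v)(\sigma_{\iota(B)})$ in the universal case. Without restricting to standard maps the argument collapses, which explains why the statement is phrased for $f \in \mathbb{S}$.
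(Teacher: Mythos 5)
Your proof is correct, and the universal half is essentially identical to the paper's: transpose through the plain adjunction $\mathfrak{X}(v)\dashv\forall^{\mathfrak{X}}_v$ in the easy direction, and in the converse apply $\mathfrak{X}(v)$, use the counit, and absorb $\mathfrak{X}(v)(\sigma_{\iota B})$ into $\sigma_{\iota A}$ via lax naturality. The existential half is where you diverge: the paper transposes $\exists^{\mathfrak{X}^*}_f(\phi\land\sigma_A)\land\sigma_B\leq\psi$ into $\phi\land\sigma_A\leq\mathfrak{X}^*(f)(\sigma_B\Rightarrow\psi)$ and then discharges the antecedent using $\sigma_A\leq\mathfrak{X}^*(f)(\sigma_B)$ and preservation of meets, whereas you first prove the absorption $\exists^{\sigma}_{\iota(f)}(\psi)\leq\sigma_{\iota B}$ (via Frobenius reciprocity plus lax naturality), which reduces $\leq^{\st}$ on the codomain side to the plain order and lets you quote the ordinary adjunction $\exists^{\mathfrak{X}}_v\dashv\mathfrak{X}(v)$ verbatim. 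Both arguments rest on exactly the same two inputs (the plain adjoint triple and lax naturality of $\sigma$ along standard maps), so the difference is organisational rather than substantive; still, your intermediate lemma is a pleasant structural fact the paper leaves implicit --- the standardised existential image of any predicate consists only of standard elements --- and it makes the chain of equivalences in the lower adjunction somewhat more transparent. Your closing remark correctly locates the role of restricting to $f\in\mathbb{S}$, which matches the paper's emphasis.
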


\begin{proof}
To see that $\mathfrak{X}^{*}(f) \dashv^{\st} \forall^{\sigma}_{\iota(f)}$,  observe  that $\mathfrak{X}^{*}(f)(\phi) \leq^{\st} \psi$ precisely when $\phi  \leq \forall_f^{\mathfrak{X}^*}(\sigma_A \Rightarrow \psi) \eqqcolon \forall_{\iota(f)}^{\sigma}(\psi)$ by using the adjunction $\mathfrak{X}^*(f) \dashv \forall^{\mathfrak{X}}_{\iota(f)}$, whence ${\phi \leq^{\st} \forall^{\sigma}_{\iota(f)}(\psi)}$. 

Conversely, if $\phi \land \sigma_B \leq \forall_f^{\mathfrak{X}^*}(\sigma_A \Rightarrow \psi)$ then $\mathfrak{X}^{*}(f)(\phi \land \sigma_B) \leq (\sigma_A \Rightarrow \psi)$. But $\sigma_A \leq \mathfrak{X}^{*}(f)(\sigma_B)$ and $\mathfrak{X}^{*}(f)$ preserves meets, so $\mathfrak{X}^{*}(f)(\phi) \land \sigma_A \leq \psi$, i.e., $\mathfrak{X}^{*}(f)(\phi) \leq^{\st} \psi$.

Similarly, $\exists^{\sigma}_{\iota(f)} \dashv^{\st} \mathfrak{X}^*(f)$: if $\exists^{\sigma}_{\iota(f)}(\phi) \leq^{\st} \psi$, then $\exists^{\mathfrak{X}^*}_f(\phi \land \sigma_A) \land \sigma_B \leq \psi$, thus $\phi \land \sigma_A \leq \mathfrak{X}^*(f)(\sigma_B \Rightarrow \psi)$. As $\sigma_A \leq \mathfrak{X}^{*}(f)(\sigma_B)$ (by lax naturality of $\sigma$ on standard maps) and $\mathfrak{X}^{*}(f)$ preserves meets, we have $\phi \land \sigma_A \leq \mathfrak{X}^*(f)(\psi)$, i.e., $\phi \leq^{\st} \mathfrak{X}^*(f)(\psi)$. These steps are clearly reversible, yielding the adjunction.
\end{proof}

This settles the basic structure on top of which we may lay out topos-theoretic counterparts to the IST axioms. Compared to the set-theoretic version, there are two major caveats to bear in mind:
\begin{enumerate}
    \item While one can make sense of unbounded quantification in toposes \cite{awodey_butz_simpson_streicher_2007, SHULMAN2019465}, it is natural to adapt to the typed nature of their internal languages and $\Delta_0$-Transfer only, i.e., restrict instances of Axiom~\ref{ax:transfer} to when the formula $\phi$ only has quantifiers bound by standard sets (this will be made precise shortly). For the same reasons, we employ a bounded form of Idealisation. Note that Standardisation (even in Nelson) is already bound to standard sets.
    \item As the logic of a general topos is intuitionistic, we may not deduce the existential form of Transfer from the universal one. This is classically a consequence of the fact that internal formulae with standard parameters are closed under negation, which in our setting amounts to requiring $\mathbb{S}$ to be a Boolean category; we'll abstain from doing so unless needed.
\end{enumerate}

\begin{definition}
The internal $\Delta_0$-formulae of IST are defined recursively as follows:
\begin{enumerate}
    \item An internal formula without quantifiers is an internal $\Delta_0$-formula;
    \item If $\phi$ and $\psi$ are internal $\Delta_0$-formulae, then so is $\phi \bowtie \psi$, where $\bowtie$ is any of the first-order logical connectives;
    \item If $\phi$ is an internal $\Delta_0$-formula with a free variable $x$ and $A$ is a standard set, then $(\exists x \in A)\ \phi(x)$ and $(\forall x \in A)\ \phi(x)$ are internal $\Delta_0$-formulae.\footnote{Here $(\exists x \in A) \ \phi(x)$ and $(\forall x \in A)\ \phi(x)$ are shorthands for $(\exists x)\ (x \in A \land \phi(x))$ and $(\forall x)\ (x \in A \Rightarrow \phi(x))$ respectively, of course.}
\end{enumerate}
\end{definition}

Transfer as in Nelson holds only for the standard predicates, which we envision as the subobjects of $\mathbb{S}$. Those of course embed into $\mathfrak{X}$ through the Heyting transformation $i\colon \Sub_{\mathbb{S}} \to \mathfrak{X}^*$ given by ${i_A \coloneqq (S \rightarrowtail A) \mapsto [\iota(S)]} \in \mathfrak{X}^*(A)$. Transfer can then be stated as the following pair of inequalities holding for each standard $f \colon A \to B \in \mathbb{S}$:
 \begin{equation}
      \begin{tikzcd}
 i_B(\exists_f^{\mathbb{S}}(S)) \leq^{\st} \exists^{\sigma}_{\iota(f)}(i_A(S)) & & \forall^{\sigma}_{\iota(f)}(i_A(S)) \leq^{\st} i_B(\forall_f^{\mathbb{S}}(S)).
 \end{tikzcd}
 \label{eq:TransferForGeneralisedNelson}
 \end{equation}
 
 Indeed, in classical IST, $i_B(\exists_f^{\mathbb{S}}(S))$ is the internal $\Delta_0$-formula 
 \begin{equation*}
     (\exists x \in A)(y=f(x) \land x \in S),
 \end{equation*}
with one free variable $y$ --- if $y$ is standard, then this is existential quantification applied to an internal formula with standard parameters, thus $(\exists^{\st} x \in A)(y=f(x) \land x \in S)$ can be deduced from classical Transfer. But that is precisely saying that $\exists^{\sigma}_{\iota(f)}(i_A(S))$ holds. 
 
 The right-hand inequality in~(\ref{eq:TransferForGeneralisedNelson}) similarly expresses classical universal Transfer but, unlike in Nelson's IST, we have to explicitly assume both inequalities in the topos context since we cannot deduce the first one from the second unless Booleaness is assumed. We also note that the reverse of the inequalities in~(\ref{eq:TransferForGeneralisedNelson}) always holds for trivial reasons, so that Transfer can be formulated as the statement that $\exists^{\mathfrak{X}^*}_f$ and $\exists^{\sigma}_f$ ``coincide on standard elements of standard predicates'':
 
 \begin{axiom}[Topos-theoretic transfer]
 For each map $f \colon A \to B$ and subobject $S \rightarrowtail A$ in $\mathbb{S}$, we have \begin{equation*}
      \begin{tikzcd}
 i_B(\exists_f^{\mathbb{S}}(S)) \cong^{\st} \exists^{\sigma}_{\iota(f)}(i_A(S)) & & \forall^{\sigma}_{\iota(f)}(i_A(S)) \cong^{\st} i_B(\forall_f^{\mathbb{S}}(S))
 \end{tikzcd}
 \end{equation*}
 \end{axiom}
 
Note that classical Transfer holds for unbounded formulae, but the topos-theoretic form of the axiom is analogous to $\Delta_0$-Transfer; as mentioned, this is natural due to the typed nature of the internal language of toposes. \\

The translation of Standardisation is similarly straightforward:
  
 \begin{axiom}[Topos-theoretic standardisation]
 For each $A \in \mathbb{S}$ and ${\phi \in \mathfrak{X}^*(A)}$, there is a unique $\phi^{\sigma} \in \Sub_{\mathbb{S}}(A)$ such that  ${\diamond_A(\phi) \coloneqq i_A(\phi^{\sigma}) \cong^{\st} \phi}$  (up to isomorphism).
 \end{axiom}
 
It is clear that this recovers the original Axiom of Standardisation in classical IST. We will soon see that in a Nelson structure that satisfies topos-theoretic standardisation the adjoint triple $\exists_{\iota(f)}^{\sigma} \dashv^{\st} \mathfrak{X}^*(f) \dashv^{\st} \forall_{\iota(f)}^{\sigma}$ can be used to construct a new hyperdoctrine over $\mathbb{S}$.\\

Formalising topos-theoretic idealisation is slightly more complex. For similar reasons as for Transfer we will describe a bounded form of the classical axiom, but with one important caveat --- there are many notions of finiteness that coincide in $\mathbf{Set}$, but differ in other toposes. One can thus conceive of a different form of idealisation for each notion of finiteness in a topos, but here we shall employ Kuratowski-finiteness:

\begin{definition}[$K$-finite subobject \cite{Johnstone:592033}]
 Let $A$ be an object of a topos $\mathcal{E}$. The \emph{object of $K$-finite subobjects of $A$}, $K(A) \rightarrowtail \wp(A)$, is the free semilattice generated by $A$ (equivalently, the sub-join-semilattice of $\wp(A)$). We say $A$ is $K$-finite if the top element $\top_A\colon 1 \to \wp(A)$ factors through $K(A) \rightarrowtail \wp(A)$.
\label{def:KFinite}
\end{definition}

A major advantage of using $K$-finiteness is that it is internally representable in any topos and does not require the existence of a natural number object \cite{Johnstone:592033}. This means that, in particular, the language of $\mathfrak{X}$ in any Nelson structure includes a type $K(\iota A)$ of $K$-finite predicates for each standard object $A$, and so there's also a predicate of `standard, finite subobjects of $A$' (namely, $\sigma_{K(\iota A)}$). Exploiting that, we may formulate the following version of idealisation:

\begin{axiom}[$K$-finite idealisation]
Let $A, B \in \mathbb{S}$ be standard objects in a Nelson structure and $r \in \Sub_{\mathcal{I}}( \iota (A\times B))$ be an internal predicate. Then 
\begin{align*}
    \mathfrak{X} \models   \forall z\colon K(\iota A).\ \sigma_{K(\iota A)} \Rightarrow \exists y\colon \iota B.\ \forall x\colon \iota A .\left( x \in_{\iota A} z \rightarrow [r](x,y)\right)
\end{align*}
implies
\begin{align*}
        \mathfrak{X} \models\exists y\colon \iota B. \forall x\colon \iota A.\left(\sigma_{\iota A} \Rightarrow [r](x,y)\right),
\end{align*} where the interpretation used is the canonical one for the internal language of the hyperdoctrine $\mathfrak{X}^*$.
\label{ax:KFiniteIdealisation}
\end{axiom}

An instance of the axiom of $K$-finite idealisation as above is said to be $B$-\emph{bounded}.\\

We remark that the toposes to be considered in Section~\ref{sec:UltrapowerModels} are Boolean, and thus the usual notions of finiteness all coincide \cite{Johnstone:592033}. Moreover, logical functors preserve $K$-finiteness, so if the Nelson structure has a topos as its standard universe $\mathbb{S}$ and the functor $\iota \colon \mathbb{S} \to \mathcal{I}$ is logical (as is the case for classical IST and the structures of Section~\ref{sec:UltrapowerModels}), then Axiom~\ref{ax:KFiniteIdealisation} can be recast in terms of $\mathfrak{X}^*$ by using the isomorphism $\iota(K(A)) \cong K(\iota(A))$.

 \subsection{Nelson models}
 
We will now study the effects of topos-theoretic standardisation holding in a Nelson structure in isolation; no form of Transfer nor Idealisation will be assumed.\\
 
First observe that the standard elements order restricts to $\Sub_{\mathbb{S}}$: given $A \in \mathbb{S}$ and subobjects $S_1$ and $S_2$ of $A$, we set $S_1 \leq^{\st} S_2$ if $i_A(S_1) \leq^{\st} i_A(S_2)$.\footnote{It is clearly the case that $S_1 \subseteq S_2$ implies $S_1 \leq^{\st} S_2$, as expected.}. This may not be equivalent to the usual ordering of predicates in $\Sub_{\mathbb{S}}$ in the absence of Transfer, even in classical IST. We can however equip the underlying set of each $\Sub_{\mathbb{S}}(A)$ with the new order; this yields a Heyting pre-algebra denoted by $\Sub^{\st}(A)$, with finite meets, joins, and implications given by the standardisation of (the inclusion of) their counterparts in $\Sub_{\mathbb{S}}(A)$:

 \begin{lemma}
In a Nelson structure satisfying topos-theoretic standardisation, there is an $\mathbb{S}$-indexed functor $(-)^{\sigma} \colon \mathfrak{X}^* \to \Sub^{\st}$ whose components map finite meets to finite intersections, finite joins to finite unions, and implications to implications.
 \label{lem.StandardisationPreservesConnectives}
 \end{lemma}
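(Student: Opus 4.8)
The plan is to define the functor $(-)^{\sigma}\colon \mathfrak{X}^* \to \Sub^{\st}$ on objects exactly by the standardisation operation guaranteed by topos-theoretic standardisation: for each $A \in \mathbb{S}$ and $\phi \in \mathfrak{X}^*(A)$, send $\phi$ to the unique $\phi^{\sigma} \in \Sub_{\mathbb{S}}(A)$ with $i_A(\phi^{\sigma}) \cong^{\st} \phi$. On morphisms, reindexing in $\mathfrak{X}^*$ along a standard map $f\colon A \to B$ must be carried to reindexing in $\Sub^{\st}$; so I would first check that $(-)^{\sigma}$ is functorial, meaning $\mathfrak{X}^*(f)(\phi)^{\sigma} \cong \Sub_{\mathbb{S}}(f)(\phi^{\sigma})$ and that identities are preserved. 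For this I expect to use that both $i$ and substitution interact well with the standard elements order: since $i$ is a Heyting transformation it commutes with reindexing, and substitution along standard maps preserves $\leq^{\st}$ (as shown in the discussion preceding Proposition~\ref{prop:SigmaAdjunction}), so applying $\Sub_{\mathbb{S}}(f)$ to $\phi^{\sigma}$ produces a standard subobject whose image under $i_A$ is $\cong^{\st}$-equivalent to $\mathfrak{X}^*(f)(\phi)$; uniqueness of standardisation then forces the two to agree.

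The heart of the lemma is the preservation of connectives, and here the key observation is that the ordering on $\Sub^{\st}(A)$ is $\leq^{\st}$ rather than the usual subobject order, so the meets, joins, and implications in $\Sub^{\st}$ are \emph{by definition} the standardisations of the corresponding operations in $\Sub_{\mathbb{S}}$. Thus proving that $(-)^{\sigma}$ preserves these connectives amounts to proving identities of the form $(\phi \land \psi)^{\sigma} \cong (\phi^{\sigma} \land \psi^{\sigma})^{\sigma}$ in $\Sub^{\st}$, and analogously for $\lor$ and $\Rightarrow$. I would establish each by chasing $\cong^{\st}$-equivalences: starting from $i_A(\phi^{\sigma}) \cong^{\st} \phi$ and $i_A(\psi^{\sigma}) \cong^{\st} \psi$, I use that $i_A$ is a morphism of Heyting pre-algebras to get $i_A(\phi^{\sigma} \land \psi^{\sigma}) = i_A(\phi^{\sigma}) \land i_A(\psi^{\sigma})$, and then invoke Proposition~\ref{prop:ImplicationPreservesLeqSt} to conclude that the connectives preserve $\cong^{\st}$, giving $i_A(\phi^{\sigma} \land \psi^{\sigma}) \cong^{\st} \phi \land \psi$. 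By uniqueness of standardisation, $(\phi \land \psi)^{\sigma}$ is the standardisation of $\phi^{\sigma} \land \psi^{\sigma}$, which is exactly the meet computed in $\Sub^{\st}(A)$. The same template handles $\lor$ and $\Rightarrow$.

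I expect the main obstacle to be bookkeeping around the distinction between $\cong$ (the usual subobject order) and $\cong^{\st}$ (the standard elements order), especially for implication, since $i_A$ being a Heyting transformation preserves implication only up to the \emph{ordinary} order, whereas the functor target lives in the $\leq^{\st}$ world. The correct statement to prove is that $i_A$ intertwines the operations so that the unique $\Sub_{\mathbb{S}}$-subobject whose $i$-image is $\cong^{\st}$-equivalent to $\phi \Rightarrow \psi$ coincides with the implication computed in $\Sub^{\st}(A)$; Proposition~\ref{prop:ImplicationPreservesLeqSt}(1) is exactly tailored to make this work, so the argument should go through provided I consistently apply it to upgrade the Heyting-transformation identities from $\cong$ to $\cong^{\st}$. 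A secondary subtlety is verifying that $\Sub^{\st}(A)$ genuinely carries a Heyting pre-algebra structure with the stated meets, joins, and implications — but this is precisely how $\Sub^{\st}$ was defined in the paragraph preceding the lemma, so it can be taken as given, and the functoriality of $(-)^{\sigma}$ then reduces to the compatibility checks above.
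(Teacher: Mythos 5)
Your proposal is correct and follows essentially the same route as the paper's proof: define $(-)^{\sigma}$ by uniqueness of standardisation, use that $i$ is a Heyting transformation together with Proposition~\ref{prop:ImplicationPreservesLeqSt} to upgrade each connective-preservation identity from $\cong$ to $\cong^{\st}$, and conclude by uniqueness, with naturality handled exactly as you describe via preservation of $\leq^{\st}$ under substitution along standard maps. The only detail the paper spells out that you leave implicit is the nullary meet, $(\top_A)^{\sigma}=A$.
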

 
 \begin{proof}
Recall that $i \colon \Sub_{\mathbb{S}} \to \mathfrak{X}^*$ is a Heyting transformation, and let $A$ be an object in $\mathbb{S}$. We start by proving that $(-)^{\sigma}$ maps finite meets to intersections. \\ 
We have that $i_A(A)=\top_A \cong^{\st} \top_A$, hence $A=(\top_A)^{\sigma}$ by uniqueness of standardisation. Similarly, given $\phi,\psi \in \mathfrak{X}^*(A)$, we have ${i_A(\phi^{\sigma} \cap \psi^{\sigma})} \cong i_A(\phi^{\sigma}) \land i_A(\psi^{\sigma}) \cong^{\st} \phi \land \psi$ by the defining property of standardisations and Proposition~\ref{prop:ImplicationPreservesLeqSt}, thus $(\phi\land\psi)^{\sigma} = \phi^{\sigma} \cap \psi^{\sigma}$ by uniqueness.  In much the same way 
\begin{equation*}
    i_A(\phi^{\sigma} \Rightarrow \psi^{\sigma}) \cong i_A(\phi^{\sigma}) \Rightarrow i_A(\psi^{\sigma}) \cong^{\st} (\phi \Rightarrow \psi)
\end{equation*}
by Proposition~\ref{prop:ImplicationPreservesLeqSt}, so $(-)^{\sigma}$ preserves implications. The same goes for finite joins. \\
The same idea proves naturality of $(-)^{\sigma}$. First note that if $f \colon A \to B \in \mathbb{S}$ is a standard map and $\phi \in \mathfrak{X}^*(B)$, then the pullback $f^{-1}(\phi^{\sigma})$ is a subobject of $A$ in $\mathbb{S}$. Now since $i_B(\phi^{\sigma}) \cong^{\st} \phi$, we have that $\mathfrak{X}^*(f)(i_B(\phi^{\sigma})) \cong^{\st} \mathfrak{X}^*(f)(\phi)$ as we know that substitution over a standard map preserves the standard elements order. But $i$ is a natural transformation, thus $i_A(f^{-1}(\phi^{\sigma})) \cong^{\st} \mathfrak{X}^*(\phi)$. We conclude that $f^{-1}(\phi^{\sigma}) = \mathfrak{X}^*(\phi)^{\sigma}$, as needed.
 \end{proof}
 
 Verifying that $\Sub^{\st}(A)$ is a Heyting pre-algebra for each $A \in \mathbb{S}$ in the advertised manner is now trivial. We shall see that these are the predicates for a new hyperdoctrine over $\mathbb{S}$.\\

 Given a standard map $f\colon A \to B \in \mathbb{S}$, the fact that $\mathfrak{X}^*(f)$ preserves the standard elements order provides a functor $\Sub^{\st}(B) \to \Sub^{\st}(A)$; we'll denote it by  $\Sub^{\st}(f)$. Indeed, if $S_1 \leq^{\st} S_2$, then the fact that $i \colon \Sub_{\mathbb{S}} \to \mathfrak{X}^*$ is a Heyting transformation implies that  $\mathfrak{X}^*(f)(i_A(S_1)) \leq^{\st} \mathfrak{X}^*(f)(i_A(S_2))$, in which case $\Sub^{\st}(f)(S_1) \leq^{\st} \Sub^{\st}(f)(S_2)$. We'll argue that it has a left adjoint defined by $\exists^{\st}_f(S) \coloneqq \{\exists^{\sigma}_{\iota(f)} (i_A(S)) \}^{\sigma}$ and a right adjoint given by $\forall^{\st}_f(S) \coloneqq \{ \forall^{\sigma}_{\iota(f)} (i_A(S))\}^{\sigma}$ for each $S \in \Sub^{\st}(A)$.
 
 \begin{lemma}
 Suppose topos-theoretic standardisation holds. Then for each map $f \colon A \to B$ in $\mathcal{S}$, we have an adjoint triple $\exists^{\st}_f \dashv \Sub^{\st}(f) \dashv \forall^{\st}_f$, where $\Sub^{\st}(f): \Sub^{\st}(B) \to \Sub^{\st}(A)$ is given by pulling back along $f$.
 \label{lem:StandardisationImpliesAdjoints}
 \end{lemma}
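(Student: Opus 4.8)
The plan is to transport the $\sigma$-adjoint triple of Proposition~\ref{prop:SigmaAdjunction} across the equivalence of preorders that standardisation sets up between $\Sub^{\st}(A)$ and the underlying set of $\mathfrak{X}^*(A)$ ordered by $\leq^{\st}$. The two functors realising this equivalence are $i_A$ and $(-)^{\sigma}$: by topos-theoretic standardisation we have $i_A(\phi^{\sigma}) \cong^{\st} \phi$ for every $\phi \in \mathfrak{X}^*(A)$, while uniqueness of standardisation forces $(i_A(S))^{\sigma} = S$ for every $S \in \Sub_{\mathbb{S}}(A)$ (since $i_A(S) \cong^{\st} i_A(S)$ trivially). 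As $\leq^{\st}$ on $\Sub^{\st}(A)$ is by definition the order reflected by $i_A$, this exhibits $i_A \colon \Sub^{\st}(A) \to (\mathfrak{X}^*(A), \leq^{\st})$ as an equivalence of preorders with quasi-inverse $(-)^{\sigma}$.

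First I would record that $\Sub^{\st}(f)$ corresponds to $\mathfrak{X}^*(f)$ under this equivalence. Because $i \colon \Sub_{\mathbb{S}} \to \mathfrak{X}^*$ is a Heyting transformation it commutes with substitution, so $i_A(\Sub^{\st}(f)(S)) = i_A(f^{-1}(S)) = \mathfrak{X}^*(f)(i_B(S))$ for every $S$; here $\Sub^{\st}(f)$ is the same underlying pullback operation as $\Sub_{\mathbb{S}}(f)$, merely carrying the standard elements order, on which it is monotone since $\mathfrak{X}^*(f)$ preserves $\leq^{\st}$. Dually, $\exists^{\st}_f$ and $\forall^{\st}_f$ are the transports of $\exists^{\sigma}_{\iota(f)}$ and $\forall^{\sigma}_{\iota(f)}$ along $(-)^{\sigma}$; they are well-defined monotone maps $\Sub^{\st}(A) \to \Sub^{\st}(B)$ because $\exists^{\sigma}_{\iota(f)}, \forall^{\sigma}_{\iota(f)}$ preserve the standard elements order (as already established) and $(-)^{\sigma}$ is a functor for $\leq^{\st}$ by Lemma~\ref{lem.StandardisationPreservesConnectives}.

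The adjunctions then follow by a direct chase through the equivalence. For the right adjoint, given $S \in \Sub^{\st}(B)$ and $T \in \Sub^{\st}(A)$, I would verify
\[
\Sub^{\st}(f)(S) \leq^{\st} T \iff \mathfrak{X}^*(f)(i_B(S)) \leq^{\st} i_A(T) \iff i_B(S) \leq^{\st} \forall^{\sigma}_{\iota(f)}(i_A(T)),
\]
where the first step is the correspondence above plus the definition of $\leq^{\st}$ on $\Sub^{\st}$, and the second is the adjunction $\mathfrak{X}^*(f) \dashv^{\st} \forall^{\sigma}_{\iota(f)}$ of Proposition~\ref{prop:SigmaAdjunction}. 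Since $\forall^{\sigma}_{\iota(f)}(i_A(T)) \cong^{\st} i_B(\forall^{\st}_f(T))$ by the defining property of standardisation, the last inequality is exactly $S \leq^{\st} \forall^{\st}_f(T)$, yielding $\Sub^{\st}(f) \dashv \forall^{\st}_f$. The mirror-image chase, replacing $\forall^{\sigma}_{\iota(f)}$ by $\exists^{\sigma}_{\iota(f)}$ and invoking $\exists^{\sigma}_{\iota(f)} \dashv^{\st} \mathfrak{X}^*(f)$ together with $i_B(\exists^{\st}_f(S)) \cong^{\st} \exists^{\sigma}_{\iota(f)}(i_A(S))$, gives $\exists^{\st}_f \dashv \Sub^{\st}(f)$.

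The genuinely delicate point is not any single computation but the bookkeeping of \emph{which} order one is in: every inequality above must be read in the standard elements preorder rather than the ambient subobject order of $\Sub_{\mathbb{S}}$ (the two need not coincide without Transfer), and one must be careful that it is legitimate to slide freely between $\cong^{\st}$ and equality of standardisations, which is exactly what uniqueness of $(-)^{\sigma}$ licenses. Once one is disciplined about always working modulo $\cong^{\st}$, the argument is simply the transport of an adjoint triple across an equivalence of preorders, and the well-definedness and monotonicity claims of the second paragraph are the only routine verifications required.
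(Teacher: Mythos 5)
Your proposal is correct and follows essentially the same route as the paper: both reduce the claim to the $\dashv^{\st}$-adjoint triple of Proposition~\ref{prop:SigmaAdjunction}, the defining property $i_A(\phi^{\sigma}) \cong^{\st} \phi$ of standardisation, and naturality of $i$, with the order-chases matching step for step. Your packaging of this as transport across the preorder equivalence $i_A \dashv\vdash (-)^{\sigma}$ is a clean reformulation of the paper's more direct computation, not a different argument.
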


 \begin{proof}
  Functoriality of $\exists_f^{\st}$ and $\forall_f^{\st}$ follows from  $\exists^{\sigma}_{\iota(f)}$ and $\forall^{\sigma}_{\iota(f)}$ preserving the standard elements order and the definition of standardisation. Indeed, suppose $S_1 \leq^{\st} S_2$. Then $i_A(S_1)\leq^{\st} i_A(S_2)$, and since $\exists^{\sigma}_{\iota(f)}$ preserves the standard elements order we have $\exists^{\sigma}_{\iota(f)} (i_A(S_1)) \leq^{\st} \exists^{\sigma}_{\iota(f)} (i_A(S_2))$. But then 
  \begin{equation*}
      i_A(\{\exists^{\sigma}_{\iota(f)} (i_A(S_1)\}^{\sigma}) \cong^{\st} \exists^{\sigma}_{\iota(f)} (i_A(S_1)) \leq^{\st} \exists^{\sigma}_{\iota(f)} (i_A(S_2)) \cong^{\st} i_A(\{\exists^{\sigma}_{\iota(f)} (i_A(S_2))\}^{\sigma}),
  \end{equation*}
  and therefore $\exists^{\st}_f(S_1) \leq^{\st} \exists^{\st}_f(S_2)$. The proof of functoriality of $\forall_f^{\st}$ is analogous. \\
 To see that $\exists^{\st}_f \dashv \Sub^{\st}(f)$, suppose that $\exists^{\st}_f(S_1) \leq^{\st} S_2$. Then 
 \begin{equation*}
     \exists^{\sigma}_{\iota(f)} (i_A(S_1))  \cong^{\st} i_B(\exists^{\st}_f(S_1)) \leq^{\st} i_B(S_2),
 \end{equation*} by definition of $\exists^{\st}_f(S_1)$. Now $\exists^{\sigma}_{\iota(f)} \dashv^{\st} \mathfrak{X}^*(f)$ by Proposition~\ref{prop:SigmaAdjunction}, therefore $i_A(S_1) \leq^{\st} \mathfrak{X}^*(f)(i_B(S_2))= i_A(f^{-1}(S_2))$, as $i \colon \Sub_{\mathbb{S}} \to \mathfrak{X}^*$ is a Heyting transformation. Thus $S_1 \leq^{\st} \Sub^{\st}(f)(S_2)$. These steps are fully reversible, yielding the adjunction.\\
 The exact same argument shows that $\Sub^{\st}(f) \dashv \forall^{\st}_f$, as we know that $\mathfrak{X}^*(f) \dashv^{\st} \forall^{\sigma}_{\iota(f)}$ by Proposition~\ref{prop:SigmaAdjunction}.
 \end{proof}

 This as far as we can get with topos-theoretic standardisation alone, but we will show that if topos-theoretic transfer is also assumed then  $\Sub^{\st}$ can be upgraded to a hyperdoctrine. \\
 
 We'll need a useful consequence of classical Transfer: \emph{standard} objects that have the same standard elements must in fact be the same. This is a heavily utilised fact in Nelson-style arguments, which we can reproduce in the topos sphere:
 
 \begin{proposition}
 Given a Nelson structure satisfying topos-theoretic transfer and subobjects $S_1$ and $S_2$  of $A$ in $\mathbb{S}$, we have $S_1 \leq^{\st} S_2$ iff $i_A(S_1) \leq i_A(S_2)$.
 \label{prop.TransferImpliesContainment}
 \end{proposition}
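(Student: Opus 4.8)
\textit{Proof proposal.} The plan is to prove the nontrivial implication by recognising the genuine order inequality as an instance of transfer along the terminal projection. The reverse direction is immediate: if $i_A(S_1) \leq i_A(S_2)$ then a fortiori $i_A(S_1) \land \sigma_A \leq i_A(S_2)$, which is precisely $S_1 \leq^{\st} S_2$.

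For the forward implication I would first reduce to the case $S_1 = \top_A$. Since $i_A$ is a Heyting morphism, setting $T := (S_1 \Rightarrow S_2) \in \Sub_{\mathbb{S}}(A)$ gives $i_A(T) = i_A(S_1) \Rightarrow i_A(S_2)$, so that $S_1 \leq^{\st} S_2$ (i.e. $i_A(S_1) \land \sigma_A \leq i_A(S_2)$) is equivalent to $\sigma_A \leq i_A(T)$, while the target inequality $i_A(S_1) \leq i_A(S_2)$ is equivalent to $i_A(T) \cong \top$. Thus it suffices to show that $\sigma_A \leq i_A(T)$ forces $i_A(T) \cong \top$; conceptually this is exactly the classical fact that a standard predicate holding at every standard element holds everywhere.

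Next I would apply topos-theoretic transfer to the terminal projection $!_A \colon A \to 1$ and the subobject $T$, yielding $\forall^{\sigma}_{\iota(!_A)}(i_A(T)) \cong^{\st} i_1(\forall^{\mathbb{S}}_{!_A}(T))$ in $\mathfrak{X}^*(1)$. On the left, unfolding the definition gives $\forall^{\mathfrak{X}}_{\iota(!_A)}(\sigma_A \Rightarrow i_A(T))$; the hypothesis $\sigma_A \leq i_A(T)$ makes $\sigma_A \Rightarrow i_A(T) = \top$, and since $\forall$ preserves the top element the whole expression equals $\top$. The key leverage is that the structural condition $\sigma_1 \cong \top$ forces $\cong^{\st}$ to collapse onto $\cong$ on $\mathfrak{X}^*(1)$, so the transfer equivalence can be upgraded to an honest isomorphism $i_1(\forall^{\mathbb{S}}_{!_A}(T)) \cong \top_{\iota 1}$. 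Escaping the (generally weaker) standard-elements order by pushing the statement to the terminal object, where $\sigma$ is the top predicate, is the heart of the argument.

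Finally, rather than reflecting this terminal identity back into $\Sub_{\mathbb{S}}$ — which would require $i$ or $\iota$ to reflect the top element, something not assumed — I would reindex along $!_A$ and invoke the counit. Applying $\mathfrak{X}^*(!_A)$ and using naturality of $i$ turns the left-hand side into $i_A\bigl(!_A^*(\forall^{\mathbb{S}}_{!_A}(T))\bigr)$ and the right-hand side into $\top_{\iota A}$; the counit of $!_A^* \dashv \forall^{\mathbb{S}}_{!_A}$ gives $!_A^*(\forall^{\mathbb{S}}_{!_A}(T)) \leq T$, whence $\top_{\iota A} \cong i_A(!_A^*(\forall^{\mathbb{S}}_{!_A}(T))) \leq i_A(T)$ and therefore $i_A(T) \cong \top$, as required. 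I expect the main obstacle to be exactly this last manoeuvre: the transfer axiom only ever delivers information ``on standard elements'', so the genuine inequality must be extracted at the terminal object and then transported back by reindexing together with the counit, never appealing to any order-reflection property of $i$ or $\iota$.
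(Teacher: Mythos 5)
Your proposal is correct and follows essentially the same route as the paper: rewrite $S_1 \leq^{\st} S_2$ as $\sigma_A \leq i_A(S_1 \Rightarrow S_2)$, apply universal transfer along $!_A \colon A \to 1$, use $\sigma_1 \cong \top$ to upgrade $\cong^{\st}$ to a genuine isomorphism at the terminal object, and transport the result back to $A$. The only cosmetic difference is that you invoke the counit of $!_A^* \dashv \forall^{\mathbb{S}}_{!_A}$ in $\Sub_{\mathbb{S}}$ together with naturality of $i$, whereas the paper uses that $i$ commutes with $\forall$ and then the corresponding counit in $\mathfrak{X}^*$.
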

 \begin{proof}
For the non-trivial direction, the hypothesis gives 
\begin{equation*}
    \sigma_A \leq i_A(S_1) \Rightarrow i_A(S_2) = i_A(S_1 \Rightarrow S_2)
\end{equation*}
in $\mathfrak{X}^*(A)$, while $\forall_!(\sigma_A \Rightarrow i_A(S_1 \Rightarrow S_2)) \land \sigma_1 \leq \forall_!(i_A(S_1 \Rightarrow S_2))$ by topos-theoretic transfer (applied to the standard map $!\colon A \to 1$). Combining these facts yields $\forall_!(\top_A) \land \sigma_1 =\top \land \sigma_1 =\top\leq \forall_!(i_A(S_1 \Rightarrow S_2))$. But then $i_A(S_1) \leq i_A(S_2)$, as $\sigma_1=\top$.
 \end{proof}

We can now prove:

\begin{lemma}
If a Nelson structure satisfies topos-theoretic standardisation and transfer, then $\Sub^{\st}$ is a hyperdoctrine over $\mathbb{S}$.
\label{lem.TransferImpliesHyperdoctrine}
\end{lemma}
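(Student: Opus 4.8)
The plan is to reduce everything to the fact that $\Sub_{\mathbb{S}}$ is already a genuine hyperdoctrine over $\mathbb{S}$ (since $\mathbb{S}$ is a Heyting category), transporting its structure across the change of ordering from $\leq$ to $\leq^{\st}$. By Lemma~\ref{lem:StandardisationImpliesAdjoints} we already possess the adjoint triple $\exists^{\st}_f \dashv \Sub^{\st}(f) \dashv \forall^{\st}_f$, and each fibre $\Sub^{\st}(A)$ is a Heyting pre-algebra; what remains is to check that $\Sub^{\st}(f)$ is a morphism of Heyting pre-algebras (the only non-automatic part being preservation of implication, as meets and joins are preserved by any functor possessing both adjoints) and that the Beck--Chevalley condition holds.

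The crucial bridge is to identify the newly-defined quantifiers with the honest ones of $\Sub_{\mathbb{S}}$ up to $\cong^{\st}$. For $f\colon A\to B$ in $\mathbb{S}$ and $S\in\Sub^{\st}(A)$, the defining property of standardisation gives $i_B(\exists^{\st}_f(S)) \cong^{\st} \exists^{\sigma}_{\iota(f)}(i_A(S))$, while topos-theoretic transfer supplies $\exists^{\sigma}_{\iota(f)}(i_A(S)) \cong^{\st} i_B(\exists^{\mathbb{S}}_f(S))$; chaining these yields $\exists^{\st}_f(S) \cong^{\st} \exists^{\mathbb{S}}_f(S)$, and the identical argument gives $\forall^{\st}_f(S) \cong^{\st} \forall^{\mathbb{S}}_f(S)$. (Proposition~\ref{prop.TransferImpliesContainment} may be invoked to sharpen these to genuine containments of the external images, but working up to $\cong^{\st}$ is all we need.) For the Heyting structure, note that standardising an already standard predicate returns it, so by Lemma~\ref{lem.StandardisationPreservesConnectives} the meet, join, and implication of $\Sub^{\st}(A)$ agree on underlying subobjects with those of $\Sub_{\mathbb{S}}(A)$; since $\Sub^{\st}(f)$ is pullback along $f$ and $\mathbb{S}$ is Heyting, it therefore preserves implication.

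Finally I would deduce Beck--Chevalley from the corresponding condition in $\Sub_{\mathbb{S}}$. Given a pullback square with $k,h$ over $f,g$ in $\mathbb{S}$, monotonicity of $\Sub^{\st}(f)$ with respect to $\leq^{\st}$ (established just before Lemma~\ref{lem:StandardisationImpliesAdjoints}) together with the bridge above gives $\Sub^{\st}(f)\exists^{\st}_g(S) \cong^{\st} \Sub^{\st}(f)\exists^{\mathbb{S}}_g(S)$; this equals $\Sub_{\mathbb{S}}(f)\exists^{\mathbb{S}}_g(S)$ on underlying subobjects, which is isomorphic in the $\leq$ order (hence \emph{a fortiori} in $\leq^{\st}$, as $\leq$ implies $\leq^{\st}$) to $\exists^{\mathbb{S}}_k\Sub_{\mathbb{S}}(h)(S)$ by Beck--Chevalley for $\Sub_{\mathbb{S}}$; a final application of the bridge returns this to $\exists^{\st}_k\Sub^{\st}(h)(S)$. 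The same chain with $\forall$ in place of $\exists$ completes the verification. The main obstacle is exactly this transport across orderings: standardisation alone pins down the adjoints but offers no control relating $\exists^{\st}_f$ to a Beck--Chevalley-friendly operation, and it is topos-theoretic transfer --- through the bridge lemma --- that identifies $\exists^{\st}_f$ and $\forall^{\st}_f$ with the genuine quantifiers of the Heyting category $\mathbb{S}$, which is precisely what makes the condition transportable.
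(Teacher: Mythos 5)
Your proof is correct, but it takes a genuinely different route from the paper's. The paper verifies Beck--Chevalley by unfolding the definition of $\exists^{\st}_f$ inside $\mathfrak{X}^*$ and invoking the Beck--Chevalley condition of the ambient hyperdoctrine $\mathfrak{X}^*$ (available because $\iota$ preserves pullbacks), using lax naturality of $\sigma$ for one inequality and topos-theoretic transfer to re-insert the standardness predicate that external Beck--Chevalley strips off in the other. You instead prove the ``bridge'' $\exists^{\st}_f(S) \cong^{\st} \exists^{\mathbb{S}}_f(S)$ and $\forall^{\st}_f(S) \cong^{\st} \forall^{\mathbb{S}}_f(S)$ first, and then transport Beck--Chevalley from $\Sub_{\mathbb{S}}$, which satisfies it because $\mathbb{S}$ is assumed to be a Heyting category. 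Your bridge is legitimate (it is exactly the chain $i_B(\exists^{\st}_f S) \cong^{\st} \exists^{\sigma}_{\iota(f)}(i_A S) \cong^{\st} i_B(\exists^{\mathbb{S}}_f S)$, the first isomorphism from the defining property of standardisation and the second from the transfer axiom, plus transitivity of $\cong^{\st}$), and it is in substance the content of Lemma~\ref{lem.TransferImpliesHeyting}, which the paper proves separately afterwards; your organisation makes that lemma a byproduct rather than a follow-up. What your route buys is modularity and the fact that Beck--Chevalley is borrowed from the standard universe rather than the external one; what the paper's route buys is that it isolates exactly where transfer is needed (only in the direction $\mathfrak{X}^*(h)\exists^{\mathfrak{X}^*}_k(\sigma_I \land i_I(S)) \leq^{\st} \exists^{\mathfrak{X}^*}_f(\sigma_A \land i_A(g^{-1}S))$). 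You also explicitly check that $\Sub^{\st}(f)$ preserves implication, which the paper treats as already settled by Lemma~\ref{lem.StandardisationPreservesConnectives} and the remark following it; your observation that the connectives of $\Sub^{\st}(A)$ agree with those of $\Sub_{\mathbb{S}}(A)$ on underlying subobjects (since standardising a standard predicate returns it, by uniqueness) is the right justification. No gaps.
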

\begin{proof}
By Lemma~\ref{lem:StandardisationImpliesAdjoints}, it remains to check that $\Sub^{\st}$ satisfies the Beck-Chevalley condition. Let \begin{equation*}
     \begin{tikzcd}
      A \arrow[swap]{d}{g} \arrow[]{r}{f}& B \arrow[]{d}{h}\\
      I \arrow[swap]{r}{k} &   J
    \end{tikzcd}
 \end{equation*}
 be a pullback in $\mathbb{S}$. We need to show that $\exists^{\st}_f\Sub^{\st}(g)(S) \cong^{\st} \Sub^{\st}(h) \exists^{\st}_k(S)$ for all $S \in \Sub^{\st}(I)$.
 
The left-hand side is $\exists^{\st}_f(g^{-1}(S)) = \{\exists^{\sigma}_{\iota(f)} i_A(g^{-1}(S))\}^{\sigma}$, so applying $i_B$ yields
 \begin{align*}
     i_B (\exists^{\st}_f\Sub^{\st}(g)(S)) \cong^{\st} \exists^{\sigma}_{\iota(f)} i_A(g^{-1}(S))= \exists^{\mathfrak{X}^*}_f (\sigma_A \land i_A(g^{-1}(S)))
 \end{align*}
 by definition of $\exists_{\iota(f)}^{\sigma}$. The fact that $i$ commutes with pullbacks then gives
 \begin{equation*}
     i_B (\exists^{\st}_f\Sub^{\st}(g)(S)) \cong^{\st}\exists^{\mathfrak{X}^*}_f (\sigma_A \land \mathfrak{X}^*(g)(i_I(S))).
 \end{equation*}
On the right-hand side, we have
\begin{equation*}
    {\Sub^{\st}(h) \exists^{\st}_k(S)}=h^{-1}(\{ \exists_{\iota(k)}^{\sigma}(i_I(S))\}^{\sigma})= \{\mathfrak{X}^*(h) \exists_{\iota(k)}^{\sigma}(i_I(S))\}^{\sigma}
\end{equation*}
by naturality of standardisation. Therefore
 \begin{equation*}
     i_B( \Sub^{\st}(h) \exists^{\st}_k(S)) \cong^{\st} \mathfrak{X}^*(h) \exists_{\iota(k)}^{\sigma}(i_I(S)).
 \end{equation*}
The problem is now reduced to showing that
 \begin{equation}
    \exists^{\mathfrak{X}^*}_f (\sigma_A \land \mathfrak{X}^*(g)(i_I(S)))\cong^{\st} \mathfrak{X}^*(h) \exists^{\mathfrak{X}^*}_k(\sigma_I \land i_I(S)).
     \label{eq:HaveToShow}
 \end{equation}
 We know  $\sigma_A \leq \mathfrak{X}^*(g)(\sigma_I)$, so $\exists^{\mathfrak{X}^*}_f (\sigma_A \land \mathfrak{X}^*(g)(i_I(S))) \leq \exists_f^{\mathfrak{X}^*} \mathfrak{X}^*(g)(\sigma_I \land i_I(S))$. Now $\iota$ preserves pullbacks and $\mathfrak{X}$ is a hyperdoctrine, hence $\mathfrak{X}^*$ satisfies the Beck-Chevalley condition. Applying it to the given pullback diagram gives $\exists^{\mathfrak{X}^*}_f (\sigma_A \land \mathfrak{X}^*(g)(i_I(S)))\leq \mathfrak{X}^*(h) \exists^{\mathfrak{X}^*}_k(\sigma_I \land i_I(S))$.
 
 For the converse inequality, note that the Beck-Chevalley property of $\mathfrak{X}^*$ gives  
     \begin{equation*}
         \mathfrak{X}^*(h) \exists^{\mathfrak{X}^*}_k(\sigma_I \land i_I(S))\leq \exists_f^{\mathfrak{X}^*}\mathfrak{X}^*(g)(\sigma_I \land i_I(S)),
     \end{equation*}
     which is bound by $\exists_f^{\mathfrak{X}^*}(i_A(g^{-1}S))$ by naturality of $i$. But topos-theoretic transfer gives $\exists_f^{\mathfrak{X}^*}(i_A(g^{-1}S)) \leq^{\st} \exists^{\sigma}_{\iota(f)}(i_A(g^{-1}S))$, and therefore
     \begin{equation*}
          \mathfrak{X}^*(h) \exists^{\mathfrak{X}^*}_k(\sigma_I \land i_I(S)) \leq^{\st} \exists^{\mathfrak{X}^*}_f(\sigma_A \land i_A(g^{-1}S)).
     \end{equation*}
     But $i_A(g^{-1}S) = \mathfrak{X}^*(g)(i_I(S))$.
\end{proof}

The natural question is how this newly minted doctrine embeds into $\mathfrak{X}^*$; this is in the expected way:

\begin{lemma}
For a Nelson structure satisfying topos-theoretic standardisation and transfer, the {$\mathbb{S}$-indexed} functor $i \colon \Sub^{\st} \to \mathfrak{X}^*$ is a Heyting transformation.
\label{lem.TransferImpliesHeyting}
\end{lemma}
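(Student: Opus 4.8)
The plan is to verify the three defining properties of a Heyting transformation for $i\colon \Sub^{\st}\to\mathfrak{X}^*$: that each component $i_A$ is an order-preserving functor into $\mathfrak{X}^*(A)$, that the family is $\mathbb{S}$-indexed (natural), and that it commutes with the Heyting connectives and with the quantifiers $\exists^{\st}_f,\forall^{\st}_f$. Two of these come essentially for free. Order-preservation of $i_A$ is exactly Proposition~\ref{prop.TransferImpliesContainment}: if $S_1\leq^{\st}S_2$ then $i_A(S_1)\leq i_A(S_2)$ in $\mathfrak{X}^*(A)$, so $i_A$ is a genuine functor into the honestly-ordered $\mathfrak{X}^*(A)$. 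Naturality is inherited verbatim from the fact that $i\colon\Sub_{\mathbb{S}}\to\mathfrak{X}^*$ is already a Heyting transformation, since $\Sub^{\st}(f)$ is pullback along $f$ and hence $i_A(\Sub^{\st}(f)(S))=i_A(f^{-1}(S))\cong\mathfrak{X}^*(f)(i_B(S))$.

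The uniform device for the remaining clauses is the pair of identities $i_A(\phi^{\sigma})\cong^{\st}\phi$ (the defining property of standardisation) and $(i_A S)^{\sigma}=S$ (its uniqueness), which make $i$ and $(-)^{\sigma}$ mutually inverse up to the standard elements order. For the connectives, each operation of $\Sub^{\st}(A)$ is the standardisation of its counterpart in $\Sub_{\mathbb{S}}(A)$; unwinding this and using that $i\colon\Sub_{\mathbb{S}}\to\mathfrak{X}^*$ preserves the connective (so that, e.g., $i_A(S_1)\wedge i_A(S_2)=i_A(S_1\cap S_2)$ already lies in the image of $i_A$), together with Lemma~\ref{lem.StandardisationPreservesConnectives}, yields $i_A(S_1\wedge^{\st}S_2)\cong^{\st} i_A(S_1)\wedge i_A(S_2)$, and likewise for $\vee$, $\Rightarrow$, $\top$, $\bot$. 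For the existential quantifier, the definition $\exists^{\st}_f(S)=\{\exists^{\sigma}_{\iota(f)}(i_A S)\}^{\sigma}$ gives $i_B(\exists^{\st}_f(S))\cong^{\st}\exists^{\sigma}_{\iota(f)}(i_A S)$, and the topos-theoretic transfer axiom rewrites the right-hand side as $\cong^{\st} i_B(\exists^{\mathbb{S}}_f(S))$, where $i_B(\exists^{\mathbb{S}}_f(S))\cong\exists^{\mathfrak{X}^*}_f(i_A S)$ because the original $i$ commutes with quantifiers; the universal clause is identical, using the $\forall$-half of transfer.

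In every case I am left with a statement of the shape $i_B(P)\cong^{\st} i_B(Q)$ with $P,Q\in\Sub_{\mathbb{S}}(B)$, whereas a Heyting transformation into $\mathfrak{X}^*$ demands a genuine isomorphism $i_B(P)\cong i_B(Q)$ in $\mathfrak{X}^*(B)$. Bridging this gap is the crux of the argument, and it is precisely what Proposition~\ref{prop.TransferImpliesContainment} supplies: since both sides are images under $i_B$ of subobjects in $\mathbb{S}$, the relation $\cong^{\st}$ between them upgrades to honest $\cong$. I expect this upgrade to be the main obstacle to a fully careful write-up, and it is the step that genuinely consumes transfer: the connective clauses already hold from standardisation alone (one checks that the meet, join and implication of $\Sub^{\st}(A)$ coincide with those of $\Sub_{\mathbb{S}}(A)$, which $i$ preserves), whereas the quantifier clauses need \emph{both} the transfer axiom, to trade $\exists^{\sigma}_{\iota(f)}$ for $i_B\circ\exists^{\mathbb{S}}_f$, and Proposition~\ref{prop.TransferImpliesContainment}, to promote the resulting $\cong^{\st}$ to an isomorphism in $\mathfrak{X}^*$.
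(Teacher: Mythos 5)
Your proposal is correct and follows essentially the same route as the paper: identify the connectives and quantifiers of $\Sub^{\st}$ with (the standardisations of) their $\Sub_{\mathbb{S}}$-counterparts, use topos-theoretic transfer to trade $\exists^{\sigma}_{\iota(f)}$ for $i_B\circ\exists^{\mathbb{S}}_f$, and then invoke Proposition~\ref{prop.TransferImpliesContainment} to promote the resulting $\cong^{\st}$ between images of standard predicates to a genuine isomorphism in $\mathfrak{X}^*$. Your identification of that last upgrade as the crux is exactly the step the paper's proof turns on.
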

\begin{proof}
 If $ \bowtie$ stands for any of the logical connectives, we clearly have that $(S_1  \bowtie^{\st} S_2) \cong^{\st} (S_1  \bowtie S_2)$ by the definition of the operations $\bowtie^{\st}$ in $\Sub^{\st}$, and thus  $i_A(S_1 \bowtie^{\st} S_2) \cong i_A(S_1  \bowtie S_2)$ by Proposition~\ref{prop.TransferImpliesContainment}. As $i_A$ preserves the operations in $\Sub_{\mathbb{S}}$, it therefore preserves the operations in $\Sub^{\st}$.
 
 It remains to show that it commutes with the quantifiers of $\Sub^{\st}$. We know $\exists^{\mathfrak{X}^*}_f i_A(S) \cong i_B(\exists^{\mathbb{S}}_f(S))$, so it suffices to show that $i_B(\exists^{\mathbb{S}}_f(S)) \cong i_B( \exists^{\st}_f(S))$. Again, this is the case by Proposition~\ref{prop.TransferImpliesContainment} since $\exists^{\mathbb{S}}_f(S) \cong^{\st} \exists^{\st}_f(S)$. The exact same argument applies to universal quantification.
\end{proof}

\begin{observation*}
If $\mathbb{S}$ is a topos, then $\Sub^{\st}$ admits a generic predicate: let $t \in \Sub_{\mathbb{S}}(\Omega)$ be a subobject classifier and $S \in \Sub^{\st}(A)$. If $f \colon A \to \Omega$ classifies $S$ in $\mathbb{S}$, then $\Sub^{\st}(t)=f^{-1}(t) \cong S$ in $\Sub_{\mathbb{S}}(A)$, and therefore $\Sub^{\st}(t) \cong^{\st} S$. In particular, if $\mathbb{S}$ is a topos and both topos-theoretic standardisation and transfer are assumed, then $\Sub^{\st}$ is a tripos over $\mathbb{S}$.
\end{observation*}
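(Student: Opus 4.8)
The plan is to exhibit a generic predicate for $\Sub^{\st}$ directly from the subobject classifier of the topos $\mathbb{S}$, and then invoke Lemma~\ref{lem.TransferImpliesHyperdoctrine} to upgrade ``hyperdoctrine'' to ``tripos''. Since $\mathbb{S}$ is assumed to be a topos, it comes equipped with a subobject classifier $\Omega$ together with its universal subobject $t \in \Sub_{\mathbb{S}}(\Omega)$. I would take $(\Omega, t)$ as the candidate generic predicate, first observing that the fibre $\Sub^{\st}(\Omega)$ has the \emph{same} underlying collection of subobjects as $\Sub_{\mathbb{S}}(\Omega)$ --- only the preorder differs, being the standard elements order --- so that $t$ is a bona fide element of $\Sub^{\st}(\Omega)$ and the candidate is well posed.

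The core of the argument is to check the generic-predicate property against the standard elements order. Given $A \in \mathbb{S}$ and any $S \in \Sub^{\st}(A)$, I would take $f \coloneqq \chi_S \colon A \to \Omega$ to be the classifying map of $S$ in the topos $\mathbb{S}$; this is a morphism of $\mathbb{S}$, hence standard, so that the reindexing $\Sub^{\st}(f)$ is defined. Because $\Sub^{\st}(f)$ is by definition pullback along $f$ --- the same operation as $\Sub_{\mathbb{S}}(f) = f^{-1}$ --- we obtain $\Sub^{\st}(f)(t) = f^{-1}(t)$. The defining property of the subobject classifier then yields $f^{-1}(t) \cong S$ as subobjects of $A$ in $\mathbb{S}$. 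Finally, since an isomorphism of subobjects in $\Sub_{\mathbb{S}}$ is in particular a mutual containment, and containment in $\Sub_{\mathbb{S}}$ implies $\leq^{\st}$, we get $\Sub^{\st}(f)(t) \cong^{\st} S$, which is exactly the generic-predicate condition for $\Sub^{\st}$.

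With the generic predicate established, the tripos conclusion is formal: Lemma~\ref{lem.TransferImpliesHyperdoctrine} already shows that $\Sub^{\st}$ is a hyperdoctrine over $\mathbb{S}$ whenever topos-theoretic standardisation and transfer both hold, and a hyperdoctrine admitting a generic predicate is by definition a tripos. I do not expect a genuine obstacle here; the single point requiring care is the bookkeeping distinction between the two preorders living on one common collection of subobjects. One must confirm that the classifying map supplied by the topos $\mathbb{S}$ and the pullback used to reindex $\Sub^{\st}$ are \emph{literally} the topos-level data, so that the subobject-classifier equation transports into a $\cong^{\st}$-statement rather than demanding a fresh computation in the standard elements order. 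Once this identification is made explicit, everything else follows immediately from the definitions.
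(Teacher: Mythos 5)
Your proposal is correct and is essentially the paper's own argument: take the universal subobject $t\in\Sub_{\mathbb{S}}(\Omega)$ as the generic predicate, pull it back along the classifying map of $S$ to get $f^{-1}(t)\cong S$ in $\Sub_{\mathbb{S}}(A)$, observe that isomorphism in $\Sub_{\mathbb{S}}$ implies $\cong^{\st}$, and combine with Lemma~\ref{lem.TransferImpliesHyperdoctrine} to conclude that $\Sub^{\st}$ is a tripos. Your extra care about the two preorders on the same underlying collection of subobjects is a worthwhile clarification but not a departure from the paper's route.
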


Note that a converse to the combination of Lemma~\ref{lem.TransferImpliesHeyting} and Lemma~\ref{lem.TransferImpliesHyperdoctrine} holds:

\begin{proposition}
Suppose topos-theoretic standardisation holds and the $\mathbb{S}$-indexed functor ${i \colon \Sub^{\st} \to \mathfrak{X}^*}$ preserves quantification. Then topos-theoretic transfer holds.
\label{prop:PreservationOfQuantifiersImpliesTransfer}
\end{proposition}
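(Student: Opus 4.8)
The plan is to derive both transfer isomorphisms by splicing together three identities relating the four quantifiers along $f$ that are in play: the native quantifier $\exists^{\mathbb{S}}_f$ of $\Sub_{\mathbb{S}}$, the standardised quantifier $\exists^{\st}_f$, the $\sigma$-quantifier $\exists^{\sigma}_{\iota(f)}$, and the external quantifier $\exists^{\mathfrak{X}^*}_f$. Fix $f\colon A \to B$ and $S \rightarrowtail A$ in $\mathbb{S}$. First I would record the identity that comes for free from standardisation: since by definition $\exists^{\st}_f(S) = \{\exists^{\sigma}_{\iota(f)}(i_A(S))\}^{\sigma}$, applying the defining property $i_B(\phi^{\sigma}) \cong^{\st} \phi$ of topos-theoretic standardisation to $\phi = \exists^{\sigma}_{\iota(f)}(i_A(S))$ yields $i_B(\exists^{\st}_f(S)) \cong^{\st} \exists^{\sigma}_{\iota(f)}(i_A(S))$. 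Next, the hypothesis that $i\colon \Sub^{\st} \to \mathfrak{X}^*$ preserves quantification gives $i_B(\exists^{\st}_f(S)) \cong \exists^{\mathfrak{X}^*}_f(i_A(S))$. Finally, the structural fact that $i\colon \Sub_{\mathbb{S}} \to \mathfrak{X}^*$ is \emph{itself} a Heyting transformation---which holds in any Nelson structure because $\iota$ is a Heyting functor and $[\cdot]$ is a Heyting transformation, so their composite commutes with quantification---gives $i_B(\exists^{\mathbb{S}}_f(S)) \cong \exists^{\mathfrak{X}^*}_f(i_A(S))$. This last identity is exactly the one invoked at the start of the proof of Lemma~\ref{lem.TransferImpliesHeyting}.

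Chaining these, and using that $\cong$ implies $\cong^{\st}$ together with the transitivity of $\cong^{\st}$, I obtain
\begin{equation*}
i_B(\exists^{\mathbb{S}}_f(S)) \cong \exists^{\mathfrak{X}^*}_f(i_A(S)) \cong i_B(\exists^{\st}_f(S)) \cong^{\st} \exists^{\sigma}_{\iota(f)}(i_A(S)),
\end{equation*}
hence $i_B(\exists^{\mathbb{S}}_f(S)) \cong^{\st} \exists^{\sigma}_{\iota(f)}(i_A(S))$, which is the existential half of topos-theoretic transfer. The universal half is proved by the identical argument with $\exists$ replaced by $\forall$ throughout: standardisation gives $i_B(\forall^{\st}_f(S)) \cong^{\st} \forall^{\sigma}_{\iota(f)}(i_A(S))$ via $\forall^{\st}_f(S) = \{\forall^{\sigma}_{\iota(f)}(i_A(S))\}^{\sigma}$, the hypothesis gives $i_B(\forall^{\st}_f(S)) \cong \forall^{\mathfrak{X}^*}_f(i_A(S))$, and the Heyting transformation $i$ gives $i_B(\forall^{\mathbb{S}}_f(S)) \cong \forall^{\mathfrak{X}^*}_f(i_A(S))$, which chain to $\forall^{\sigma}_{\iota(f)}(i_A(S)) \cong^{\st} i_B(\forall^{\mathbb{S}}_f(S))$.

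The argument is essentially bookkeeping once the four quantifiers are lined up, so I do not expect a genuine obstacle. The one point that needs care---and the reason the converse is clean---is the structural identity $i_B(\exists^{\mathbb{S}}_f(S)) \cong \exists^{\mathfrak{X}^*}_f(i_A(S))$: it is this always-available compatibility of $i$ with the \emph{native} quantifiers of $\mathfrak{X}^*$ that lets me trade the external quantifier for the standard one $\exists^{\mathbb{S}}_f$, and it must be carefully distinguished from the hypothesis, which instead concerns the standardised quantifier $\exists^{\st}_f$. I would also remark that the argument is insensitive to whether ``preserves quantification'' is read up to $\cong$ or only up to $\cong^{\st}$, since every link in the chain is absorbed by $\cong^{\st}$, which is an equivalence relation containing $\cong$.
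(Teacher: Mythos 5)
Your proposal is correct and follows essentially the same route as the paper's proof: both combine the hypothesis $i_B(\exists^{\st}_f(S)) \cong \exists^{\mathfrak{X}^*}_f(i_A(S))$, the always-available compatibility of $i\colon \Sub_{\mathbb{S}} \to \mathfrak{X}^*$ with the native quantifiers, and the fact that $i_B(\exists^{\st}_f(S)) \cong^{\st} \exists^{\sigma}_{\iota(f)}(i_A(S))$ is immediate from the definition of $\exists^{\st}_f$ and standardisation. You merely make explicit the structural identity that the paper leaves implicit, and your closing remarks on the two possible readings of ``preserves quantification'' are accurate but not needed.
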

\begin{proof}
Under the given hypothesis, Lemma~\ref{lem:StandardisationImpliesAdjoints} still holds and we have $\exists_f(i_A(S)) \cong i_B( \exists^{\st}_f(S))$, and $\forall_f i_A(S) \cong i_B( \forall^{\st}_f(S))$ for each $S \in \Sub^{\st}(A)$. To prove existential topos-theoretic transfer we need $\exists_f(i_A(S))\cong^{\st} \exists_f^{\sigma}(i_A(S))$, therefore it suffices to show that $i_B \exists_f^{\st}(S) \cong^{\st} \exists_f^{\sigma}(i_A(S))$. But that is immediate from the definition of $\exists^{\st}_f$. The proof for universal quantification is dual.
\end{proof}


We will now work with such a Nelson structure for which topos-theoretic transfer and topos-theoretic standardisation hold, so that we have a hyperdoctrine $\Sub^{\st}$ with the previously discussed properties. It can be characterised as the doctrine of fixed points for a certain transformation on the hyperdoctrine of external predicates, mirroring the idea in classical IST that a subset of a standard set is itself standard iff it coincides with its own standardisation. 

We first observe that the transformation $i$ used to view standard predicates externally is \emph{conservative}, as its components are split monomorphisms:

\begin{proposition}
The $\mathbb{S}$-indexed functor $i \colon \Sub^{\st} \to \mathfrak{X}^*$ is a split monomorphism.
\label{prop:InclusionSplits}
\end{proposition}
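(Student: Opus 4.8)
The plan is to exhibit the standardisation operation $(-)^{\sigma}\colon \mathfrak{X}^* \to \Sub^{\st}$ as an explicit retraction of $i$, so that $i$ is split by $(-)^{\sigma}$. Two of the three ingredients are already in hand: Lemma~\ref{lem.StandardisationPreservesConnectives} tells us $(-)^{\sigma}$ is an $\mathbb{S}$-indexed functor into $\Sub^{\st}$, and Lemma~\ref{lem.TransferImpliesHeyting} tells us $i$ is one in the other direction, so the whole content reduces to checking that the composite $(-)^{\sigma}\circ i$ is isomorphic to $\mathrm{id}_{\Sub^{\st}}$, and this is what I would verify componentwise.

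For the componentwise identity, I would fix $A \in \mathbb{S}$ and $S \in \Sub^{\st}(A)$ and unwind the defining property of standardisation. By definition, $(i_A(S))^{\sigma}$ is the unique standard subobject of $A$ (up to isomorphism) whose image under $i_A$ is $\cong^{\st}$ to $i_A(S)$. But $S$ itself trivially satisfies $i_A(S) \cong^{\st} i_A(S)$ by reflexivity of the standard elements order, so $S$ is \emph{also} a standardisation of $i_A(S)$. The uniqueness clause of topos-theoretic standardisation then forces $(i_A(S))^{\sigma} \cong S$ in $\Sub_{\mathbb{S}}(A)$, hence $\cong^{\st} S$ in $\Sub^{\st}(A)$. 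This says exactly that $(-)^{\sigma}_A \circ i_A \cong \mathrm{id}_{\Sub^{\st}(A)}$, i.e. each component $i_A$ is a split monomorphism of preorders with retraction $(-)^{\sigma}_A$.

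Assembling these componentwise retractions into a retraction of $\mathbb{S}$-indexed functors is then immediate, since we already know both $i$ and $(-)^{\sigma}$ are $\mathbb{S}$-indexed functors; this establishes that $i$ is a split monomorphism. I do not anticipate a genuine obstacle here: the entire argument rests on invoking the uniqueness part of standardisation correctly. The one point demanding care is that "identity", "inverse", and "split" are all to be read up to $\cong^{\st}$ (equivalently, up to isomorphism in $\Sub_{\mathbb{S}}$), as $\Sub^{\st}$ is a Heyting pre-algebra rather than a poset, so the retraction equation holds only up to canonical isomorphism and not on the nose.
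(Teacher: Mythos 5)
Your proposal is correct and is essentially the paper's own proof, which simply states that $(-)^{\sigma}$ splits $i$ by uniqueness of standardisation; you have merely spelled out the componentwise verification that uniqueness forces $(i_A(S))^{\sigma}\cong S$. Your closing caveat about everything holding only up to isomorphism in the pre-ordered setting is accurate and consistent with the paper's ``up to isomorphism'' convention.
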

\begin{proof}
The $\mathbb{S}$-indexed functor $(-)^{\sigma}\colon \mathfrak{X}^* \to \Sub^{\st}$ serves as a splitting for $i$ by uniqueness of standardisation.
\end{proof}

Moreover, this transformation factors through the doctrine of internal predicates, so that standard predicates are also internal:

\begin{proposition}
The Heyting transformation $i \colon \Sub^{\st} \to \mathfrak{X}^*$ of $\mathbb{S}$-indexed hyperdoctrines factors through $[\cdot]^* \colon \Sub_{\mathcal{I}}^* \to \mathfrak{X}^*$.
\label{prop:InclusionFactors}
\end{proposition}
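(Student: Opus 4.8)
The plan is to realise the factorisation by the evident ``view a standard subobject as an internal one'' transformation, namely the action of $\iota$ on subobjects. Concretely, define an $\mathbb{S}$-indexed assignment $j \colon \Sub^{\st} \to \Sub_{\mathcal{I}}^*$ by $j_A(S \rightarrowtail A) \coloneqq (\iota(S) \rightarrowtail \iota(A)) \in \Sub_{\mathcal{I}}(\iota A) = \Sub_{\mathcal{I}}^*(A)$, which makes sense since the Heyting functor $\iota$ preserves monomorphisms. Unwinding the definitions of $i_A$ and of the reindexed transformation $[\cdot]^*_A = [\cdot]_{\iota A}$ gives $[\cdot]^*_A(j_A(S)) = [\iota(S)] = i_A(S)$, so that $[\cdot]^* \circ j = i$ on the nose; everything then reduces to checking that $j$ is a morphism of $\mathbb{S}$-indexed hyperdoctrines.

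The cleanest route is to first record the underlying fact that, under transfer, the standard elements order coincides with the honest order on standard subobjects: for $S_1, S_2 \in \Sub_{\mathbb{S}}(A)$ one has $S_1 \leq^{\st} S_2$ iff $S_1 \leq S_2$. Honest containment always implies $\leq^{\st}$; for the converse, Proposition~\ref{prop.TransferImpliesContainment} upgrades $S_1 \leq^{\st} S_2$ to the honest inequality $i_A(S_1) \leq i_A(S_2)$ in $\mathfrak{X}^*(A)$, that is $[\iota S_1] \leq [\iota S_2]$, and one descends this along $[\cdot]$ to $\iota S_1 \leq \iota S_2$ in $\mathcal{I}$, hence to $S_1 \leq S_2$. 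Consequently $\cong^{\st}$ is honest isomorphism of subobjects, so $\Sub^{\st}$ and $\Sub_{\mathbb{S}}$ agree as $\mathbb{S}$-indexed Heyting pre-algebras: their finite meets, joins and implications already coincide as subobjects because $(-)^{\sigma}$ splits $i$ and therefore fixes standard predicates (for instance $S_1 \land^{\st} S_2 = (i_A(S_1 \cap S_2))^{\sigma} = S_1 \cap S_2$), while their quantifiers coincide by Lemma~\ref{lem.TransferImpliesHeyting}. Under this identification $j$ is nothing but the restriction of the Heyting functor $\iota$ to subobjects.

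It is then routine that $j$ is a Heyting transformation: naturality is the identity $\iota(f^{-1}S) = (\iota f)^{-1}(\iota S)$, valid because $\iota$ preserves pullbacks, and preservation of meets, joins, implications, images and dual images is exactly the content of $\iota$ being a Heyting functor; combined with $[\cdot]^* \circ j = i$ this gives the desired factorisation. The main obstacle is the descent step above --- passing from $[\iota S_1] \leq [\iota S_2]$ in $\mathfrak{X}^*$ back to $\iota S_1 \leq \iota S_2$ in $\mathcal{I}$ --- since without order-reflection a strict internal inequality could collapse under $[\cdot]$ and $j$ would fail to be monotone. I would therefore isolate the order-reflecting (conservativity) property of the inclusion $[\cdot]$ of internal predicates as the hypothesis carrying the argument; in the motivating model $\mathbf{Set}_{\mathbb{U}}$ it holds automatically, as internal and external implication agree on internal formulae.
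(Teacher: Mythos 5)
Your proof is correct and follows essentially the same route as the paper's: define the factoring map as the action of $\iota$ on subobjects, use Proposition~\ref{prop.TransferImpliesContainment} together with order-reflection of $[\cdot]$ to get monotonicity for the $\leq^{\st}$ order, and use pullback-preservation of $\iota$ for indexedness. You are right to flag the descent step as the load-bearing hypothesis --- the paper invokes exactly this under the phrase ``since $[\cdot]$ is an embedding,'' even though Definition~\ref{def:NelsonStructure} only asks for a Heyting transformation.
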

\begin{proof}
By Proposition~\ref{prop.TransferImpliesContainment}, if $S_1 \leq^{\st} S_2$, then $[\iota(S_1)] \leq [\iota(S_2)]$, and thus $\iota(S_1) \leq \iota(S_2)$ since $[\cdot]$ is an embedding. This makes application of $\iota$ a functor $\Sub^{\st}(A) \to \Sub_{\mathcal{I}}^*(A)$ for each $A \in \mathbb{S}$. As $\Sub^{\st}(f)$ is just pulling back along $f$ and $\iota$ preserves pullbacks, such maps in fact form an $\mathbb{S}$-indexed functor $\Sub^{\st} \to \Sub_{\mathcal{I}}^*$.
\end{proof}

Given an external predicate $\phi \in \mathfrak{X}^*(A)$, the predicate $\diamond_A(\phi) \coloneqq i_A(\phi^{\sigma})$ is just the standardisation of $\phi$ (seen externally). It is clear from topos-theoretic standardisation that the familiar statement of Standardisation holds in the internal logic of $\mathfrak{X}^*$:
\begin{proposition}
Let $A$ be a standard object and $\phi \in \mathfrak{X}^*(A)$ be an external predicate in a Nelson structure satisfying topos-theoretic transfer and standardisation. Then 
    \begin{equation*}
       \mathfrak{X}^* \models  \forall x\colon A.\ \sigma_A(x) \Rightarrow  (x \in_A \phi \iff x \in \diamond_A(\phi))
    \end{equation*}
    \label{prop:standardisationIL}
\end{proposition}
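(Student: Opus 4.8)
The plan is to unwind the canonical interpretation of the displayed formula in the hyperdoctrine $\mathfrak{X}^*$ and observe that the two resulting fibrewise inequalities are literally the two halves of topos-theoretic standardisation. First I would record the interpretations of the subformulae in the fibre $\mathfrak{X}^*(A)$ over the single free variable $x \colon A$: the predicate $\sigma_A(x)$ is interpreted as $\sigma_A$, the predicate $x \in_A \phi$ as $\phi$, and $x \in \diamond_A(\phi)$ as $\diamond_A(\phi) = i_A(\phi^{\sigma})$. Consequently the body $\sigma_A(x) \Rightarrow (x \in_A \phi \iff x \in \diamond_A(\phi))$ is interpreted as the predicate
\begin{equation*}
  \sigma_A \Rightarrow \big[(\phi \Rightarrow \diamond_A(\phi)) \land (\diamond_A(\phi) \Rightarrow \phi)\big] \in \mathfrak{X}^*(A).
\end{equation*}

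Next I would reduce validity of the universally quantified statement to an identity in a single fibre. Interpreting $\forall x \colon A$ as the right adjoint $\forall_{!}$ along $! \colon A \to 1$, validity in $\mathfrak{X}^*$ means that $\forall_{!}$ applied to the above predicate is $\top$ in $\mathfrak{X}^*(1)$. Since $\mathfrak{X}^*(!)$ is a Heyting functor and hence preserves the top element, the adjunction $\mathfrak{X}^*(!) \dashv \forall_{!}$ gives $\forall_{!}(\psi) \cong \top_1$ if and only if $\psi \cong \top_A$. Thus it suffices to prove
\begin{equation*}
  \sigma_A \Rightarrow \big[(\phi \Rightarrow \diamond_A(\phi)) \land (\diamond_A(\phi) \Rightarrow \phi)\big] \cong \top_A.
\end{equation*}

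I would then rewrite this condition using the Heyting structure of $\mathfrak{X}^*(A)$. Using $\sigma_A \Rightarrow (\alpha \land \beta) \cong (\sigma_A \Rightarrow \alpha) \land (\sigma_A \Rightarrow \beta)$ and the fact that $\chi \Rightarrow \psi \cong \top$ holds precisely when $\chi \leq \psi$, the displayed equivalence splits into the two requirements $\sigma_A \leq (\phi \Rightarrow \diamond_A(\phi))$ and $\sigma_A \leq (\diamond_A(\phi) \Rightarrow \phi)$, i.e.\ (transposing across the Heyting adjunction) $\sigma_A \land \phi \leq \diamond_A(\phi)$ and $\sigma_A \land \diamond_A(\phi) \leq \phi$. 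By the definition of the standard elements order these are exactly $\phi \leq^{\st} \diamond_A(\phi)$ and $\diamond_A(\phi) \leq^{\st} \phi$, that is, $\phi \cong^{\st} \diamond_A(\phi)$. This is precisely the assertion of topos-theoretic standardisation about $\diamond_A(\phi) = i_A(\phi^{\sigma})$, so the proof concludes.

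The mathematical content is therefore light: the whole statement is nothing but a reformulation of $\cong^{\st}$ inside the internal language, and the only genuine work is the careful bookkeeping of the semantic translation. The one step to handle with care is the passage from validity of the closed $\forall$-formula to the fibrewise identity $\psi \cong \top_A$; here one must use that substitution along $! \colon A \to 1$ preserves $\top$, which is what makes the adjunction argument go through without invoking any additional hypothesis (in particular, no appeal to $\sigma_1 \cong \top$ is needed for this proposition).
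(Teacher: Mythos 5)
Your proof is correct and is exactly the argument the paper has in mind: the paper offers no written proof, merely asserting the proposition is ``clear from topos-theoretic standardisation,'' and your careful unwinding of the semantics --- reducing validity of the $\forall$-sentence to $\sigma_A \land \phi \leq \diamond_A(\phi)$ and $\sigma_A \land \diamond_A(\phi) \leq \phi$, i.e.\ to $\phi \cong^{\st} \diamond_A(\phi)$ --- is precisely the intended content. Your closing observation that $\sigma_1 \cong \top$ is not needed here (unlike in the transfer counterpart, Proposition~\ref{prop:transferIL}) is also accurate.
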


The classical transfer statements also hold within the internal logic of $\mathfrak{X}^*$:
\begin{proposition}
Let $A$ be a standard object and $\psi \in \Sub(A)$ be a standard predicate in a Nelson structure satisfying topos-theoretic transfer and standardisation. Then 
    \begin{equation}
     \mathfrak{X}^* \models (   \exists x\colon A.\   \psi^*(x) ) \Rightarrow ( \exists x\colon A.\ \sigma_A(x) \land  \psi^*(x)),
     \label{eq:existentialtransferIL}
    \end{equation}
    and
    \begin{equation}
     \mathfrak{X}^* \models (   \forall x\colon A.\ \sigma_A(x) \Rightarrow  \psi^*(x) ) \Rightarrow (\forall x\colon A.\  \psi^*(x))
     \label{eq:universaltransferIL}
    \end{equation}
    where $\psi^* \coloneqq i_A(\psi) \in \mathfrak{X}^*(A)$.
    \label{prop:transferIL}
\end{proposition}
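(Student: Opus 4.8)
The plan is to unfold both internal-logic statements into inequalities in the fibre $\mathfrak{X}^*(1)$ over the terminal object, identify the two sides as $\sigma$-quantifiers and ordinary $\mathfrak{X}^*$-quantifiers along the terminal projection $!_A \colon A \to 1$, and then combine topos-theoretic transfer with the smallness condition $\sigma_1 \cong \top$. Recall first that a closed formula holds in $\mathfrak{X}^*$ exactly when its interpretation in $\mathfrak{X}^*(1)$ is $\top$, and that $\top \leq (a \Rightarrow b)$ is equivalent to $a \leq b$. Under the canonical interpretation, $\exists x\colon A.\ \psi^*(x)$ denotes $\exists^{\mathfrak{X}^*}_{!_A}(\psi^*)$, while $\exists x\colon A.\ \sigma_A(x) \land \psi^*(x)$ denotes $\exists^{\mathfrak{X}^*}_{!_A}(\sigma_A \land \psi^*) = \exists^{\sigma}_{\iota(!_A)}(\psi^*)$ by the very definition of the $\sigma$-quantifier. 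Hence~(\ref{eq:existentialtransferIL}) is precisely the inequality $\exists^{\mathfrak{X}^*}_{!_A}(\psi^*) \leq \exists^{\sigma}_{\iota(!_A)}(\psi^*)$ in $\mathfrak{X}^*(1)$. Dually, $\forall x\colon A.\ (\sigma_A(x) \Rightarrow \psi^*(x))$ denotes $\forall^{\mathfrak{X}^*}_{!_A}(\sigma_A \Rightarrow \psi^*) = \forall^{\sigma}_{\iota(!_A)}(\psi^*)$ and $\forall x\colon A.\ \psi^*(x)$ denotes $\forall^{\mathfrak{X}^*}_{!_A}(\psi^*)$, so~(\ref{eq:universaltransferIL}) reduces to $\forall^{\sigma}_{\iota(!_A)}(\psi^*) \leq \forall^{\mathfrak{X}^*}_{!_A}(\psi^*)$.

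For the existential case, since $\psi^* = i_A(\psi)$ and the transformation $i \colon \Sub_{\mathbb{S}} \to \mathfrak{X}^*$ commutes with existential quantification (being a Heyting transformation, as $\iota$ is a Heyting functor and $[\cdot]$ a Heyting transformation, the fact already invoked in the proof of Lemma~\ref{lem.TransferImpliesHeyting}), one has $\exists^{\mathfrak{X}^*}_{!_A}(\psi^*) \cong i_1(\exists^{\mathbb{S}}_{!_A}(\psi))$. Topos-theoretic transfer applied to $!_A$ and $\psi$ then gives $i_1(\exists^{\mathbb{S}}_{!_A}(\psi)) \cong^{\st} \exists^{\sigma}_{\iota(!_A)}(\psi^*)$, and chaining these yields $\exists^{\mathfrak{X}^*}_{!_A}(\psi^*) \leq^{\st} \exists^{\sigma}_{\iota(!_A)}(\psi^*)$, that is, $\exists^{\mathfrak{X}^*}_{!_A}(\psi^*) \land \sigma_1 \leq \exists^{\sigma}_{\iota(!_A)}(\psi^*)$. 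The universal case is entirely parallel: the same two facts give $\forall^{\sigma}_{\iota(!_A)}(\psi^*) \cong^{\st} i_1(\forall^{\mathbb{S}}_{!_A}(\psi)) \cong \forall^{\mathfrak{X}^*}_{!_A}(\psi^*)$, whence $\forall^{\sigma}_{\iota(!_A)}(\psi^*) \land \sigma_1 \leq \forall^{\mathfrak{X}^*}_{!_A}(\psi^*)$.

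The final and conceptually decisive step is to upgrade these standard-elements inequalities to honest inequalities in $\mathfrak{X}^*(1)$. This is exactly where the smallness hypothesis $\sigma_1 \cong \top$ from Definition~\ref{def:NelsonStructure} enters: meeting with $\sigma_1$ is the identity on $\mathfrak{X}^*(1)$, so the relation $\leq^{\st}$ collapses to $\leq$ over the terminal object. We thereby obtain $\exists^{\mathfrak{X}^*}_{!_A}(\psi^*) \leq \exists^{\sigma}_{\iota(!_A)}(\psi^*)$ and $\forall^{\sigma}_{\iota(!_A)}(\psi^*) \leq \forall^{\mathfrak{X}^*}_{!_A}(\psi^*)$, which are precisely~(\ref{eq:existentialtransferIL}) and~(\ref{eq:universaltransferIL}). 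I expect no real obstacle beyond bookkeeping; the only point requiring care is tracking the distinction between $\leq^{\st}$ and $\leq$ and observing that it vanishes in the fibre over $1$, which is the whole reason the condition $\sigma_1 \cong \top$ was imposed in the first place.
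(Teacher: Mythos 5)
Your proposal is correct and follows essentially the same route as the paper's own proof: unfold the sequents to inequalities in $\mathfrak{X}^*(1)$ along $!_A$, use that $i$ is a Heyting transformation to rewrite $\exists^{\mathfrak{X}^*}_{!_A}(i_A(\psi))$ as $i_1(\exists^{\mathbb{S}}_{!_A}(\psi))$, apply topos-theoretic transfer to get the $\leq^{\st}$ inequality, and invoke $\sigma_1 \cong \top$ to collapse $\leq^{\st}$ to $\leq$ over the terminal object. No substantive differences.
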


\begin{proof}
    To show (\ref{eq:existentialtransferIL}) is to check that
    \begin{equation*}
        \exists^{\mathfrak{X}^*}_{!_A}(i_A(\psi)) \leq \exists^{\mathfrak{X}^*}_{!_A}(\sigma_A \land i_A(\psi))
    \end{equation*}
    in $\mathfrak{X}^*(1)$, where $!_A\colon A \to 1$ in $\mathcal{E}$. Using that $i \colon \Sub_{\mathcal{E}} \to \mathfrak{X}^*$ is a Heyting transformation and topos-theoretic transfer, we can write 
    \begin{equation*}
       \exists^{\mathfrak{X}^*}_{!_A}(i_A(\psi)) \cong i_1(\exists_{!_A}^{\mathcal{E}}(\psi)) \leq^{\st} \exists^{\sigma}_{\iota(f)}(i_A(\psi)) = \exists^{\mathfrak{X}^*}_{!_A}(\sigma_A \land i_A(\psi)).
    \end{equation*}
    Now $\sigma_1=\top$ in $\mathfrak{X}^*(1)$, so we have the necessary inequality. The same argument applies to (\ref{eq:universaltransferIL}).
\end{proof}

The interpretation of intuitionistic first-order logic is sound in any first-order hyperdoctrine such as $\mathfrak{X}^*$, thus combining Proposition~\ref{prop:standardisationIL}, Proposition~\ref{prop:transferIL} and the statement of $K$-finite idealisation, we conclude:

\begin{theorem}[Soundess for Nelson models]
Let $\mathfrak{N}$ be a Nelson model with standard universe $\mathcal{E}$, $\mathcal{L}$ be an $\mathcal{E}$-typed relational language, and $\phi$ be a sentence in $\mathcal{L}$. If $\phi$ is provable by using intuitionistic logic without equality, $\Delta_0$-Transfer, Standardisation, and $K$-finite Idealisation, then $\mathfrak{X}^* \models \phi$ for any choice of interpretation of $\mathcal{L}$ in $\mathfrak{X}^*$.
\label{thm:SoundnessForNelsonModels}
\end{theorem}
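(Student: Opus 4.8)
The plan is to argue by induction on derivations, isolating the purely logical fragment from the three nonstandard schemes. First I fix the semantics: an interpretation of the $\mathcal{E}$-typed relational language $\mathcal{L}$ in $\mathfrak{X}^*$ (over the standard universe $\mathcal{E}=\mathbb{S}$) assigns to each sort an object of $\mathcal{E}$, to each relation symbol of type $(A_1,\dots,A_n)$ a predicate in $\mathfrak{X}^*(A_1\times\cdots\times A_n)$, and interprets the standardness predicate $\st$ at sort $A$ by the distinguished $\sigma_A$; the relativised quantifiers $\forall^{\st},\exists^{\st}$ of the object language then unwind to $\forall^{\sigma},\exists^{\sigma}$ by definition. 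A formula-in-context $\vec{x}\mid\phi$ receives its canonical interpretation $\llbracket\vec{x}\mid\phi\rrbracket$ in the appropriate fibre, and I read $\mathfrak{X}^*\models\phi$ for a sentence as $\llbracket\phi\rrbracket\cong\top$ in $\mathfrak{X}^*(1)$; to make the induction go through I actually prove the sequent form, that every derivable entailment $\Gamma\vdash\phi$ in context $\vec{x}$ is validated by $\llbracket\vec{x}\mid\Gamma\rrbracket\leq\llbracket\vec{x}\mid\phi\rrbracket$.

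For the logical fragment I invoke the soundness of equality-free intuitionistic first-order logic in an arbitrary first-order hyperdoctrine \cite{lawvere,van2008realizability}: every logical axiom is validated and every rule preserves validity in $\mathfrak{X}^*$, which is a first-order hyperdoctrine by hypothesis. This is exactly why the restriction to the equality-free fragment is convenient---it lets me cite this result directly, without equipping $\mathfrak{X}^*$ with equality predicates and checking their compatibility with the standardness structure $\sigma$ and with the nonstandard schemes (e.g.\ idealisation, whose archetypal consequence is obtained from $\neg(x=y)$). It then suffices to treat the three schemes as additional non-logical axioms and verify that each instance is validated.

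Each scheme is matched to an already-established result: Standardisation to Proposition~\ref{prop:standardisationIL}, $\Delta_0$-Transfer (existential and universal) to Proposition~\ref{prop:transferIL}, and $K$-finite Idealisation to the defining Axiom~\ref{ax:KFiniteIdealisation} of a Nelson model. For the transfer case a \emph{translation step} comes first: for an instance applied to an $\st$-free $\Delta_0$-formula whose atomic subformulae are interpreted by standard predicates, one shows by a sub-induction that the whole interpretation lies in the image of $i\colon\Sub^{\st}\to\mathfrak{X}^*$, i.e.\ has the form $i_A(\psi)$, whereupon Proposition~\ref{prop:transferIL} applies. This uses that $i$ is a Heyting transformation commuting with the quantifiers (Lemma~\ref{lem.TransferImpliesHeyting}) together with the fact that the bounded object-language quantifiers $\exists x\in A,\forall x\in A$ translate precisely to the hyperdoctrine quantifiers along projections; the connective and base cases are immediate since $i$ preserves the Heyting operations.

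The principal obstacle is \emph{uniformity}: each of the three validating statements is phrased for a predicate in a single fibre, whereas a derivation may invoke an instance carrying arbitrary extra free variables, i.e.\ over an arbitrary context object $C$. I therefore have to upgrade each validation to hold in every fibre $\mathfrak{X}^*(C)$ and stably under substitution along arbitrary maps of $\mathcal{E}$. This is guaranteed by the hyperdoctrine data already in play---the Beck--Chevalley isomorphisms for $\mathfrak{X}^*$, the naturality of $\sigma$ and of $i$, and the naturality of standardisation established in Lemma~\ref{lem.StandardisationPreservesConnectives} and Lemma~\ref{lem:StandardisationImpliesAdjoints}---which let the single-fibre arguments be reindexed verbatim; the transfer case is the most delicate, being the only one not available directly from the structure axioms and resting on Proposition~\ref{prop.TransferImpliesContainment}, whose stability under reindexing must likewise be checked. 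A final bookkeeping point is that Axiom~\ref{ax:KFiniteIdealisation} is an implication between two specific internal-language sentences built from an internal relation $r$; matching it to the object-language idealisation scheme requires observing that an atomic relational interpretation provides such an $r$ through the transformation $[\cdot]$, and that the type $K(\iota A)$ and the predicate $\sigma_{K(\iota A)}$ are interpreted as intended. Assembling these, the induction closes and $\mathfrak{X}^*\models\phi$ follows.
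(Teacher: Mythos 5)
Your proposal is correct and follows essentially the same route as the paper, which likewise reduces the theorem to soundness of intuitionistic first-order logic in the hyperdoctrine $\mathfrak{X}^*$ combined with Proposition~\ref{prop:standardisationIL}, Proposition~\ref{prop:transferIL}, and the statement of $K$-finite idealisation. The additional care you take over reindexing to arbitrary contexts and over the translation of $\Delta_0$-formulae into the image of $i$ fills in bookkeeping the paper leaves implicit, but does not change the argument.
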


This is typically used in conjunction with  Proposition~\ref{prop:InclusionFactors} and the fact that $i\colon \Sub^{\st} \to \mathfrak{X}^*$ is conservative.

\section{Ultrapower models}
\label{sec:UltrapowerModels}

This section discusses examples of Nelson models built from a given topos $\mathcal{E}$. This follows a similar approach to Robinson's nonstandard analysis, but employs ultrafilters and ultrapowers internal to $\mathcal{E}$ and a factorisation theorem due to A. Kock and C. J. Mikkelsen \cite{TTFNSE}.

\begin{definition}[Volger, 1974]
    Let $\mathcal{E}$ be a topos. An internal filter on an object $X$ of $\mathcal{E}$ is a subobject $U \rightarrowtail \Omega^X$ whose characteristic morphism $\chi_U$ preserves $\land \colon \Omega \times \Omega \to \Omega$. Such a filter is proper if $\chi_U \leq \exists (!_X)$, and is an ultrafilter if it is proper and $\chi_U$ preserves $\Rightarrow \colon \Omega \times \Omega \to \Omega$.
\end{definition}

Fix an internal ultrafilter $U$ on an object $X$ of $\mathcal{E}$, and let $A$ be an object of $\mathcal{E}$. The \emph{ultrapower} of $A$ modulo $U$ is the internal filtered colimit of the $A^K$ with $K \in U$. Alternatively, it can be built from a partial map representer for $A$ \cite{VOLGER1975345}, which we briefly recap:

\begin{definition}
    Let $A$ be an object of $\mathcal{E}$. A partial map representer for $A$ is an object $\Tilde{A}$ together with a monomorphism $\eta_A \colon A \rightarrowtail \Tilde{A}$ such that, for all partial maps \begin{tikzcd}
            D \arrow[]{r}{f} \arrow[rightarrowtail]{d}{} & A  \\
            B  &
        \end{tikzcd}
     there is an unique morphism $B \xrightarrow[]{\Tilde{f}} \Tilde{A}$ such that the diagram
        \begin{tikzcd}
            D \arrow[]{r}{f} \arrow[rightarrowtail]{d}{} & A \arrow[rightarrowtail]{d}{\eta_A}\\
            B \arrow[swap]{r}{\Tilde{f}}& \Tilde{A}
        \end{tikzcd}
    is a pullback.
\end{definition}
Such a partial map representer exists for any object $A$ in a topos \cite{Johnstone:592033}.\\

To build the ultrapower, first extract from the power object $\Tilde{A}^X$ the subobject $\Tilde{A}^X \slash U$ obtained by pulling back $U$ along $\chi_{\eta_A}^X$. This is the object of partial morphisms $X \to A$ with domain in the filter $U$.

Next, pull this subobject back along the composite
\begin{equation*}
    \Tilde{A}^X \slash U \times \Tilde{A}^X \slash U \rightarrowtail \Tilde{A}^X \times \Tilde{A}^X \xrightarrow[]{\Tilde{\cap}^X} \Tilde{A}^X
\end{equation*} to obtain $K_U(A) \xrightarrow{k_{U,A}} \Tilde{A}^X \slash U \times \Tilde{A}^X\slash U$; this object $K_U(A)$ is then the subobject of pairs $f,g$ of partial morphisms $X \to A$ with domain in $U$ such that $\textbf{Eq}(f_{\mid_{\dom(f) \cap \dom(g)}}, g_{\mid_{\dom(f) \cap \dom(g)}}) \in U$. 

Finally, the ultrapower $A^X \slash U$ is the coequaliser $\Tilde{A}\slash U \xrightarrow{q_{U,A}} A^X \slash U$ of the pair \begin{equation*}
    \pi_1 \circ k_{U,A}, \pi_2 \circ k_{U,A} \colon K_U(A) \to \Tilde{A}^X \slash U,
\end{equation*} where $\pi_1, \pi_2$ are the product projections. 

This construction is functorial in $A$. Moreover,  if the functor $\Tilde{(-)}^X$ preserves epimorphisms (which is the case when $\mathcal{E}$ satisfies the internal axiom of choice), then the ultrapower functor $(-)^X\slash U$ is a Heyting functor; details can be found in \cite{VOLGER1975345}.

In $\mathbf{Set}$, $A^X \slash U$ is the set of equivalence classes of partial maps $X \to A$ with domain in $U$, where $[f]=[g]$ when $\dom(f \cap g) \in U$. Classically, there is an embedding $A \to A^X \slash U$ given by taking $U$-almost everywhere constant maps, which is replicated in abstract by the existence of a natural transformation $d_U \colon \textbf{id}_{\mathcal{E}} \to (-)^X \slash U$, namely the composite 
\begin{equation*}
    A \xrightarrow{A^{!_X}} A^X \xrightarrow{\eta_{U,A}} \Tilde{A}^X \slash U \xrightarrow{q_{U,A}} A^X \slash U
\end{equation*}
where $\eta_{U,A}$ is the fruit of the universal property of the pullback defining $\Tilde{A}^X \slash U$ via $\eta_A^X$; this is monic since $U$ is proper, and $d_U(\Omega)$ is an isomorphism since $U$ is an ultrafilter \cite{VOLGER1975345}. We shall soon see that the monos $d_U(A) \colon A \rightarrowtail A^X \slash U$ serve as the predicates $\sigma_A$ of standard elements for a Nelson structure over $\mathcal{E}$.

The standard universe $\mathbb{S}$ for the aforementioned Nelson structure will, of course, be the topos $\mathcal{E}$. In order to construct an internal universe and a hyperdoctrine $\mathfrak{X}$, however, we will employ the following theorem, with $\phi = (-)^X \slash U$:

\begin{theorem}[A. Kock \& C. J. Mikkelsen, 1972]
Let $\phi \colon \mathcal{E} \to \mathcal{F}$ be a Heyting functor between toposes that preserves subobject classifiers. Then there is a topos $\mathcal{I}$ and a factorisation $\phi = \beta \circ \alpha$ such that $\alpha \colon \mathcal{E} \to \mathcal{I}$ is logical and $\beta\colon \mathcal{I} \to \mathcal{F}$ is a Heyting functor that preserves elements.
\label{thm:Kock-Mikkelsen}
\end{theorem}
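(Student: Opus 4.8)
The plan is to build $\mathcal{I}$ as a copy of $\mathcal{E}$ whose morphisms have been ``fattened'' by the $\mathcal{F}$-points of the internal hom-objects, so that all the higher-order structure is still computed in $\mathcal{E}$ while the generalised elements live in $\mathcal{F}$. Concretely, write $\Gamma\coloneqq \mathrm{Hom}_{\mathcal{F}}(1,\phi(-))\colon \mathcal{E}\to\mathbf{Set}$, a finite-limit-preserving functor since $\phi$ is (Heyting, hence) lex and $\mathrm{Hom}_{\mathcal{F}}(1,-)$ is lex. Define $\mathcal{I}$ to have the same objects as $\mathcal{E}$ and hom-sets $\mathrm{Hom}_{\mathcal{I}}(A,B)\coloneqq \Gamma(B^A)=\mathrm{Hom}_{\mathcal{F}}(1,\phi(B^A))$, with identities and composition induced by applying $\Gamma$ to the internal identity names and the internal composition maps $C^B\times B^A\to C^A$ of $\mathcal{E}$ (using $\phi(1)=1$ and that $\phi$ preserves products). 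Set $\alpha\colon\mathcal{E}\to\mathcal{I}$ to be the identity on objects and to send $f\colon A\to B$ to $\phi(\ulcorner f\urcorner)\in\Gamma(B^A)$, and $\beta\colon\mathcal{I}\to\mathcal{F}$ to send $A\mapsto\phi(A)$ and a morphism $1\to\phi(B^A)$ to its composite with the exponential comparison map $\phi(B^A)\to\phi(B)^{\phi(A)}$, transposed to $\phi(A)\to\phi(B)$.

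With these definitions the two ``soft'' requirements are essentially immediate. A short diagram chase using naturality of the exponential comparison shows $\beta\circ\alpha=\phi$ on objects and morphisms. For preservation of elements one computes $\mathrm{Hom}_{\mathcal{I}}(1,A)=\Gamma(A^1)=\mathrm{Hom}_{\mathcal{F}}(1,\phi A)=\mathrm{Hom}_{\mathcal{F}}(1,\beta A)$, and under this identification $\beta$ acts as the identity (the comparison $\phi(A^1)\to\phi(A)^{\phi 1}$ is the identity on $\phi A$); hence $\beta$ induces a bijection on global elements. That $\mathcal{I}$ has finite limits, inherited from those of $\mathcal{E}$, follows the same pattern as the product computation $\mathrm{Hom}_{\mathcal{I}}(C,A\times B)\cong\mathrm{Hom}_{\mathcal{I}}(C,A)\times\mathrm{Hom}_{\mathcal{I}}(C,B)$, which holds because $(-)^C$ preserves products and $\phi$ and $\mathrm{Hom}_{\mathcal{F}}(1,-)$ are lex.

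The heart of the argument, and the main obstacle, is to show that for each object $C$ the $\mathcal{E}$-power object $\Omega^C$ serves as a power object of $C$ in $\mathcal{I}$; this simultaneously proves that $\mathcal{I}$ is a topos and that $\alpha$ (being identity-on-objects and sending $\Omega^C$ to $\Omega^C$) is logical. Unwinding the definitions, since $(\Omega^C)^D\cong\Omega^{C\times D}$ in $\mathcal{E}$, one has $\mathrm{Hom}_{\mathcal{I}}(D,\Omega^C)=\Gamma(\Omega^{C\times D})$, so the required universal property amounts to the natural bijection $\mathrm{Sub}_{\mathcal{I}}(C\times D)\cong\Gamma(\Omega^{C\times D})$. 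The crux is therefore the identity $\mathrm{Sub}_{\mathcal{I}}(E)\cong\Gamma(\Omega^E)=\mathrm{Hom}_{\mathcal{F}}(1,\phi(\Omega^E))$, natural in $E$: every subobject of $E$ in $\mathcal{I}$ must be classified by a unique $\mathcal{F}$-point of $\phi(\Omega^E)$. One direction sends the name of a characteristic map through $\phi$; the delicate converse requires analysing when an $\mathcal{I}$-morphism $S\to E$ is monic and extracting a characteristic $\mathcal{F}$-point, which is exactly where the hypotheses that $\phi$ is Heyting (so that it preserves monos, images and the internal membership relation) and preserves the subobject classifier are consumed. I expect this step to absorb the bulk of the work; the remaining claim, that $\beta$ is a Heyting functor, should then follow comparatively routinely.

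For that last claim I would argue that, under $\mathrm{Sub}_{\mathcal{I}}(E)\cong\Gamma(\Omega^E)$, the action of $\beta$ on subobjects is post-composition with the comparison $\theta_E\colon\phi(\Omega^E)\to\Omega_{\mathcal{F}}^{\phi E}$, i.e. the map sending a point $p\colon1\to\phi(\Omega^E)$ to the subobject of $\phi E=\beta E$ it classifies. The Heyting operations on $\mathrm{Sub}_{\mathcal{I}}(E)$ are those induced by the internal Heyting-algebra maps $\wedge,\vee,\Rightarrow\colon\Omega\times\Omega\to\Omega$ and by the internal image and dual-image maps $\Omega^C\to\Omega^D$ of $\mathcal{E}$; because $\phi$ is Heyting and preserves $\Omega$, it carries these internal operations to the corresponding ones for $\Omega_{\mathcal{F}}$, and $\theta_E$ intertwines them, so $\beta$ preserves finite meets, joins, implication and both first-order quantifiers. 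Preservation of finite limits by $\beta$ is read off directly from the exponential comparison maps, completing the verification that $\phi=\beta\circ\alpha$ is the desired factorisation.
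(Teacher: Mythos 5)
First, a point of comparison: the paper does not prove this statement at all --- it is quoted as a result of Kock and Mikkelsen, and the construction of $\mathcal{I}$, $\alpha$, and $\beta$ is explicitly deferred to \cite{TTFNSE} and \cite{TriposModels}. So there is no in-paper proof to measure you against. That said, your construction is exactly the standard Kock--Mikkelsen one: same objects as $\mathcal{E}$, hom-sets $\mathrm{Hom}_{\mathcal{F}}(1,\phi(B^A))$, composition from the internal composition maps, $\alpha$ given by names of morphisms, $\beta$ by the exponential comparison. The soft verifications you carry out (category axioms, finite limits in $\mathcal{I}$, $\beta\circ\alpha=\phi$, the bijection $\mathrm{Hom}_{\mathcal{I}}(1,A)\cong\mathrm{Hom}_{\mathcal{F}}(1,\beta A)$ giving preservation of elements) are correct and correctly located.

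As a standalone proof, however, there is a genuine gap, and it is one you flag yourself: the natural isomorphism $\Sub_{\mathcal{I}}(E)\cong\mathrm{Hom}_{\mathcal{F}}(1,\phi(\Omega^E))$ --- equivalently, that $(\Omega,\alpha(\top))$ classifies the monomorphisms of $\mathcal{I}$ --- is asserted and deferred rather than established, and it carries the entire weight of the theorem: without it you have neither that $\mathcal{I}$ is a topos nor that $\alpha$ is logical, so nothing downstream (in particular the analysis of $\beta$ on subobjects) is yet available. The difficulty is concrete. A monomorphism $S\to E$ of $\mathcal{I}$ is a point $1\to\phi(E^S)$ that is merely \emph{externally} left-cancellable against $\mathcal{I}$-morphisms; to classify it you must first show it is, up to $\mathcal{I}$-isomorphism, an $\mathcal{F}$-point of the subobject of $E^S$ internally defined in $\mathcal{E}$ as ``the injective maps'', and then transport the internal statement ``every mono has a unique characteristic map'' along $\phi$ and read it off at global sections. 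Both steps require knowing precisely which fragments of the internal logic of $\mathcal{E}$ the composite $\mathrm{Hom}_{\mathcal{F}}(1,\phi(-))$ reflects and preserves; this is where the hypotheses that $\phi$ is Heyting and preserves $\Omega$ are actually consumed, and it is not routine (it is, for instance, the step that fails if $\phi$ is merely left exact). Until that argument is written out, what you have is the correct blueprint of the Kock--Mikkelsen factorisation rather than a proof of it; to complete it you would in effect be reproducing the content of \cite{TTFNSE}.
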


Details of the construction of $\mathcal{I}$$, \alpha$, and $\beta$ can be found in \cite{TTFNSE} and \cite{TriposModels}. While it won't be relevant to this paper, we note that internal subobjects of $A^X\slash U$ can be identified with global elements $1 \to \wp(A)^X\slash U$.\\

We define the hyperdoctrine $\mathfrak{X}$ of external predicates to be the hyperdoctrine $\beta^*(\Sub_{\mathcal{F}})$ over $\mathcal{I}$. Upon reindexing along $\alpha$, we obtain the hyperdoctrine $\mathfrak{X}^* \coloneqq \alpha^* (\beta^* (\Sub_{\mathcal{F}}))) \cong \phi^* \Sub_{\mathcal{F}} = \Sub_{\mathcal{F}}(-^X\slash U)$ over $\mathcal{E}$. We can then define $[\cdot] \colon \Sub_{\mathcal{I}} \to \mathfrak{X}$ via the action of $\beta$ on subobjects, and $\iota \colon \Sub_{\mathcal{E}} \to \Sub_{\mathcal{I}}$ through the action of the (faithful) logical functor $\alpha$ on subobjects.

The final piece is determining the external standardness predicate. For each $A \in \mathcal{E}$, define $\sigma_A$ to be the monomorphism $d_U(A)\colon A \rightarrowtail A^X \slash U$ in $\mathfrak{X}^*(A)$. To show lax naturality of $\sigma$, we need $d_U(A) \leq (f^X \slash U)^{-1}(d_U(B))$ for all standard morphisms $f\colon A \to B$ (i.e., morphisms in $\mathcal{E}$). For that, consider the pullback of $d_U(B)$ along $f^X \slash U$:
\begin{equation*}
    \begin{tikzcd}
       (f^X \slash U)^{-1}(d_U(B)) \arrow[rightarrowtail]{d}{}  \arrow[]{r}{}& B \arrow[rightarrowtail]{d}{}\\
        A^X \slash U  \arrow[swap]{r}{f^X\slash U}& B^X \slash U
    \end{tikzcd}
\end{equation*}

The naturality diagram of $d_U$ with respect to $f$ fits inside this diagram, and therefore there is an unique morphism $A \to (f^X \slash U)^{-1}(d_U(B)) $ witnessing the inequality $\sigma_A \leq \mathfrak{X}^*(f) (\sigma_B)$.

\begin{definition}
    Let $U$ be an ultrafilter on an object $X$ of a topos that satisfies the internal axiom of choice. A Nelson structure induced by $U$ (denoted $\mathfrak{N}_U$) is one derived from a Kock-Mikkelsen factorisation of the Heyting functor $(-)^X \slash U$, as described above.
\end{definition}

We will now show that every Nelson structure of the form $\mathfrak{N}_U$ satisfies topos-theoretic transfer and standardisation. Recall that for such structures the Heyting transformation $i \colon \Sub_{\mathcal{E}} \to \mathfrak{X}^*$ maps $S \rightarrowtail A$ to $S^X\slash U \rightarrowtail A^X \slash U$. 

For the remainder of this chapter, we will drop the subscripts in $\Sub_{\mathcal{E}}(A)$.

\begin{lemma}
    Let $S \rightarrowtail A$ be a standard subobject in $\mathfrak{N}_U$. Then the pullback projection $\sigma_A \cap S^X\slash U \rightarrowtail A$ is isomorphic to $S$ in $\Sub(A)$.
    \label{lemma:key}
\end{lemma}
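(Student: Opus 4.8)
The plan is to recognise the subobject in question as a pullback along the standardness mono and then reduce everything to how $d_U$ acts on characteristic maps. Concretely, since $\sigma_A = d_U(A)\colon A \rightarrowtail A^X\slash U$, the meet $\sigma_A \cap S^X\slash U$ computed in $\Sub(A^X\slash U)$ is a pullback, and its projection to $A$ is exactly $d_U(A)^{-1}(S^X\slash U)$, the pullback of the mono $i_A(S) = S^X\slash U \rightarrowtail A^X\slash U$ along $d_U(A)$. Thus the lemma amounts to the statement that pulling $S^X\slash U$ back along $d_U(A)$ returns $S$, and I would prove this via the subobject classifier of $\mathcal{E}$, exploiting that $d_U(\Omega)$ is an isomorphism.

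First I would let $\chi_S \colon A \to \Omega$ classify $S \rightarrowtail A$, so that $S$ is the pullback of $\top \colon 1 \to \Omega$ along $\chi_S$. Because $\mathcal{E}$ satisfies the internal axiom of choice, $(-)^X\slash U$ is a Heyting functor and hence preserves finite limits and monomorphisms; applying it to the classifying pullback (and using $1^X\slash U \cong 1$) exhibits $S^X\slash U \rightarrowtail A^X\slash U$ as the pullback of the mono $\top^X\slash U \colon 1 \to \Omega^X\slash U$ along $\chi_S^X\slash U$. Next I would invoke naturality of $d_U \colon \textbf{id}_{\mathcal{E}} \to (-)^X\slash U$ at $\chi_S$, giving $\chi_S^X\slash U \circ d_U(A) = d_U(\Omega)\circ \chi_S$, and naturality at $\top$ (together with $d_U(1)$ being invertible), giving that $d_U(\Omega)^{-1}$ carries the subobject $\top^X\slash U$ back to $\top$.

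Assembling these, pulling $S^X\slash U$ back along $d_U(A)$ is the same as pulling $\top^X\slash U$ back along $\chi_S^X\slash U \circ d_U(A) = d_U(\Omega)\circ\chi_S$; since $d_U(\Omega)$ is an isomorphism this equals the pullback of $d_U(\Omega)^{-1}(\top^X\slash U) = \top$ along $\chi_S$, which is precisely $S$. Hence $d_U(A)^{-1}(S^X\slash U) \cong S$ in $\Sub(A)$, as required.

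The step I expect to carry the weight is the one genuinely depending on $U$ being an ultrafilter: the identification transporting $\top^X\slash U$ to $\top$ under $d_U(\Omega)^{-1}$, which is only available because $d_U(\Omega)$ is an isomorphism (for a mere proper filter it need not be). Everything else is formal manipulation with a finite-limit-preserving functor and the naturality of $d_U$. An alternative, more hands-on route would be to show directly that the naturality square of $d_U$ at the inclusion $m\colon S \rightarrowtail A$ is a pullback --- amounting to the internal fact that a constant map at $a$ lies $U$-almost-everywhere in $S$ iff $a$ factors through $S$, which uses properness of $U$ --- but I prefer the classifier argument, since it packages this cleanly and avoids unwinding the explicit construction of $S^X\slash U$ inside the ultrapower.
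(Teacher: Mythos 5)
Your argument is correct, and it takes a genuinely different route from the paper's. The paper splits the two inequalities: the easy one ($S \leq \sigma_A \cap S^X\slash U$) comes from observing that the naturality square of $d_U$ at $S \rightarrowtail A$ is a cone over the defining pullback, while the hard one is proved in the internal language of $\mathcal{E}$, by describing $\sigma_A \cap S^X\slash U$ explicitly as the classes of constant partial maps landing in $S$, checking that $[c_s] \mapsto s$ is a total functional relation (totality/functionality resting on the fact that two constant partial maps agreeing on a domain in the proper filter $U$ are equal), and invoking regularity to extract the witnessing morphism. You instead get both directions at once by a purely diagrammatic computation: identifying the pullback projection with $d_U(A)^{-1}(S^X\slash U)$, transporting the classifying square of $S$ through the finite-limit-preserving functor $(-)^X\slash U$, and using naturality of $d_U$ at $\chi_S$ and at $\top$ together with invertibility of $d_U(\Omega)$ and $d_U(1)$ to conclude $d_U(A)^{-1}(S^X\slash U) = \chi_S^{-1}\bigl(d_U(\Omega)^{-1}(\top^X\slash U)\bigr) = \chi_S^{-1}(\top) = S$. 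All the ingredients you use are available in context ($(-)^X\slash U$ is Heyting under the internal axiom of choice, and the paper records that $d_U(\Omega)$ is an isomorphism precisely because $U$ is an ultrafilter), and you correctly locate that isomorphism as the one step where ultrafilterhood enters. What each approach buys: the paper's proof is self-contained about the concrete structure of the ultrapower and makes the role of constant maps visible; yours is shorter, avoids the internal language entirely, and in fact proves the stronger, reusable statement that $d_U(A)^*$ is a retraction of $i_A$ on all of $\Sub(A)$, at the cost of outsourcing the real content to the cited fact that $d_U(\Omega)$ is invertible.
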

\begin{proof}
    The naturality square of $d_U$ with respect to the standard map $S \rightarrowtail A$ is a cone for the diagram over which the pullback $\sigma_A \land S^X \slash U$ is defined, thus we get an inequality $S \leq \sigma_A \cap S^X \slash U$ in $\Sub(A)$. 
    The other direction will be obtained via the internal language of $\mathcal{E}$ by building an appropriate total functional relation. We can internally describe
    \begin{equation*}
        \sigma_A \land S^X\slash U = \{a=[f\colon K \to S \rightarrowtail A] \in A^X \slash U \colon(\exists \Tilde{a}).\ a=[c_{\Tilde{a}}]\},
    \end{equation*}
    where $K \in U$ and $c_{\Tilde{a}} \colon X \to A$ is the constant map $x \mapsto \Tilde{a}$. Internally, we thus have
    \begin{equation*}
         \sigma_A \land S^X\slash U = \{a \in A^X \slash U \colon (\exists s \in S).\ a=[c_s]\}.
    \end{equation*}
Since two constant partial maps that coincide on some domain must be equal, it is clear that the internal relation associating  $[c_s] \in \sigma_A \land S^X\slash U$ to $s \in S$ is total and functional. As toposes are regular categories, this implies the existence of a morphism witnessing $\sigma_A \land S^X \slash U \leq S$ in $\Sub(A)$. 
\end{proof}

\begin{corollary}
    Nelson structures of the form $\mathfrak{N}_U$ satisfy topos-theoretic standardisation.
\end{corollary}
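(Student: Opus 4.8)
The plan is to obtain the standardisation of an external predicate by restricting it to the standard elements, i.e.\ by pulling back along the standardness mono $\sigma_A=d_U(A)$. Concretely, given $A\in\mathcal{E}$ and $\phi\in\mathfrak{X}^*(A)=\Sub(A^X\slash U)$, I would define $\phi^{\sigma} := d_U(A)^{-1}(\phi)$, the pullback of $\phi$ along $d_U(A)\colon A\rightarrowtail A^X\slash U$. Since $d_U(A)$ is a morphism of $\mathcal{E}$ and $A^X\slash U\in\mathcal{E}$, this $\phi^{\sigma}$ is a genuine subobject of $A$ in $\mathbb{S}=\mathcal{E}$, hence a legitimate candidate for the standardisation.

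The first substantive step is to translate the relation $\cong^{\st}$ into an equality of pullbacks along $d_U(A)$. Since $\sigma_A$ is itself monic, the assignments $S\mapsto \sigma_A\cap i_A(S)$ and $T\mapsto d_U(A)^{-1}(T)$ restrict to mutually inverse bijections between $\Sub(A)$ and the subobjects of $A^X\slash U$ lying below $\sigma_A$; consequently, for $\psi_1,\psi_2\in\mathfrak{X}^*(A)$ one has $\psi_1\cong^{\st}\psi_2$ iff $\psi_1\wedge\sigma_A\cong\psi_2\wedge\sigma_A$ iff $d_U(A)^{-1}(\psi_1)\cong d_U(A)^{-1}(\psi_2)$ in $\Sub(A)$. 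Thus verifying $i_A(\phi^{\sigma})\cong^{\st}\phi$ reduces to checking $d_U(A)^{-1}(i_A(\phi^{\sigma}))\cong d_U(A)^{-1}(\phi)$. The right-hand side is $\phi^{\sigma}$ by definition, while the left-hand side is $\sigma_A\cap (\phi^{\sigma})^X\slash U$, which Lemma~\ref{lemma:key} (applied with $S=\phi^{\sigma}$) identifies with $\phi^{\sigma}$. Existence of the standardisation then follows at once.

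For uniqueness, I would take standard subobjects $S,S'$ with $i_A(S)\cong^{\st}\phi\cong^{\st}i_A(S')$, so that $i_A(S)\cong^{\st}i_A(S')$; pulling back along $d_U(A)$ and invoking Lemma~\ref{lemma:key} twice gives $S\cong d_U(A)^{-1}(i_A(S))\cong d_U(A)^{-1}(i_A(S'))\cong S'$ in $\Sub(A)$. The only real content here is Lemma~\ref{lemma:key}, which supplies both halves; the main (and rather mild) obstacle is getting the bookkeeping right in the first step, namely the observation that meeting with $\sigma_A$ and pulling back along the monomorphism $d_U(A)$ carry the same information, so that the standard-elements relation $\cong^{\st}$ on $\mathfrak{X}^*(A)$ becomes an honest isomorphism of subobjects of $A$.
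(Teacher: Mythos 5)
Your proof is correct and takes essentially the same approach as the paper: both define $\phi^{\sigma}$ as the pullback of $\phi$ along $\sigma_A = d_U(A)$ (the paper phrases this as the pullback projection $\sigma_A \cap \phi \rightarrowtail A$) and derive existence and uniqueness from Lemma~\ref{lemma:key}. Your preliminary observation that $\cong^{\st}$ on $\mathfrak{X}^*(A)$ corresponds to an honest isomorphism of pullbacks along the monomorphism $d_U(A)$ is just a cleaner packaging of the paper's direct manipulation of meets with $\sigma_A$ in $\Sub(A^X\slash U)$.
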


\begin{proof}
    Given an external predicate $W \rightarrowtail A^X \slash U$, define $W^\sigma$ to be the pullback projection $\sigma_A \cap W \rightarrowtail A$ in $\Sub(A)$. 
    By Lemma~\ref{lemma:key}, we have
    \begin{equation}
        \sigma_A \land i_A(W^{\sigma}) = \sigma_A \land (W^{\sigma})^X\slash U \cong (W^{\sigma} \rightarrowtail A \xrightarrow{\sigma_A} A^X \slash U) = \sigma_A \cap W
        \label{eq:stand}
    \end{equation}
    in $\Sub(A^X \slash U)$. As $(\sigma_A \cap W) \cap \sigma_A = \sigma_A \cap W \leq W$, we have $i_A(W^{\sigma}) \leq^{\st} W \cap \sigma_A \leq^{\st} W$.
    Conversely, Equation~\ref{eq:stand} gives $\sigma_A \cap W \cong \sigma_A \land i_A(W^{\sigma}) \leq i_A(W^{\sigma})$, i.e., $W \leq^{\st} i_A(W^{\sigma})$.
    This settles the existence of the standardisation of external predicates. 
    
    Uniqueness is due to $\sigma_A$ being monic: if $Q \in \Sub(A)$ is such that $i_A(Q) \cong^{\st} W \cong^{\st} i_A(W^{\sigma})$, then by the same argument in Equation~\ref{eq:stand} we have
    \begin{equation*}
        \sigma_A \land i_A(Q) \cong \sigma_A \circ Q \leq i_A(W^{\sigma}) \land \sigma_A = \sigma_A \circ W^{\sigma},
    \end{equation*}
    whence $Q \leq W^{\sigma}$. By symmetry, $W^{\sigma} \leq Q$ as well.
\end{proof}

\begin{lemma}
    Nelson structures of the form $\mathfrak{N}_U$ satisfy topos-theoretic transfer.
\end{lemma}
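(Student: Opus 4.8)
The plan is to reduce topos-theoretic transfer to Lemma~\ref{lemma:key} together with the single fact that the ultrapower functor $(-)^X\slash U$ is a Heyting functor. First I would record the consequence of the latter that does most of the work: being a Heyting functor, $(-)^X\slash U$ preserves both images and universal quantification, so the Heyting transformation $i \colon \Sub \to \mathfrak{X}^*$ commutes with both quantifiers, giving $i_B(\exists^{\mathbb{S}}_f(S)) = \exists^{\mathfrak{X}^*}_f(i_A(S))$ and $i_B(\forall^{\mathbb{S}}_f(S)) = \forall^{\mathfrak{X}^*}_f(i_A(S))$ for every $f\colon A \to B$ in $\mathcal{E}$ and $S \in \Sub(A)$. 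Since the reverse inequalities of the transfer axiom hold for free, it remains only to verify the two ``hard'' inequalities, and I would treat the existential and universal quantifiers separately, in both cases computing the pullback along $\sigma_B$ and invoking Lemma~\ref{lemma:key}.

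For existential transfer I would compute $\exists^{\sigma}_{\iota(f)}(i_A(S)) = \exists^{\mathfrak{X}^*}_f(\sigma_A \land S^X\slash U)$ outright. By Lemma~\ref{lemma:key} the meet $\sigma_A \land S^X\slash U$ is the composite $S \rightarrowtail A \xrightarrow{\sigma_A} A^X\slash U$; pushing its image forward along $f^X\slash U$ and using the naturality square $f^X\slash U \circ \sigma_A = \sigma_B \circ f$ of $d_U$ identifies $\exists^{\mathfrak{X}^*}_f(\sigma_A \land S^X\slash U)$ with $\sigma_B \circ (\exists^{\mathbb{S}}_f(S))$, which Lemma~\ref{lemma:key} (applied to $T \coloneqq \exists^{\mathbb{S}}_f(S)$) rewrites as $\sigma_B \land T^X\slash U = \sigma_B \land i_B(\exists^{\mathbb{S}}_f(S))$. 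Thus $\exists^{\sigma}_{\iota(f)}(i_A(S)) \cong \sigma_B \land i_B(\exists^{\mathbb{S}}_f(S))$, from which both $\leq^{\st}$-inequalities are immediate.

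The universal case is the main obstacle, because the naive adjunction argument breaks down: the counit of $\mathfrak{X}^*(f) \dashv \forall^{\mathfrak{X}^*}_f$ together with lax naturality would yield the inequality only if $\mathfrak{X}^*(f)(\sigma_B) \leq \sigma_A$, whereas we have merely $\sigma_A \leq \mathfrak{X}^*(f)(\sigma_B)$, and the reverse genuinely fails (a nonstandard element of $A^X\slash U$ may sit over a standard element of $B^X\slash U$). Instead I would argue directly on standard elements. The easy direction follows from $S^X\slash U \leq (\sigma_A \Rightarrow S^X\slash U)$; for the hard direction I would pull $\forall^{\sigma}_{\iota(f)}(i_A(S)) = \forall^{\mathfrak{X}^*}_f(\sigma_A \Rightarrow S^X\slash U)$ back along $\sigma_B$. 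Reasoning in the internal language of $\mathcal{E}$, a standard element $[c_{b_0}]$ lies in this pullback exactly when every standard $[c_{a_0}]$ over it satisfies $[c_{a_0}] \in S^X\slash U$; by the injectivity of $d_U$ on constants and Lemma~\ref{lemma:key} this condition reads ``$f(a_0) = b_0 \Rightarrow a_0 \in S$ for all $a_0 \in A$'', which is precisely the defining property of $\forall^{\mathbb{S}}_f(S)$. Hence $\sigma_B^{-1}(\forall^{\sigma}_{\iota(f)}(i_A(S))) \cong \forall^{\mathbb{S}}_f(S) \cong \sigma_B^{-1}(i_B(\forall^{\mathbb{S}}_f(S)))$, and since two subobjects of $B^X\slash U$ with isomorphic pullbacks along the mono $\sigma_B$ have the same standard part, Lemma~\ref{lemma:key} gives $\forall^{\sigma}_{\iota(f)}(i_A(S)) \cong^{\st} i_B(\forall^{\mathbb{S}}_f(S))$.

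I expect the genuinely delicate point to be the universal case: not the idea, which is clear, but rigorously passing between the internal-language description of ``standard elements over a standard element'' and the actual pullback computation, rather than arguing informally on elements. The existential case and the quantifier-commutation of $i$ are, by contrast, routine once one unwinds that $(-)^X\slash U$ is a Heyting functor.
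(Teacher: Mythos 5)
Your existential half is essentially the paper's argument: both identify $\sigma_A \land S^X\slash U$ with the composite $S \rightarrowtail A \xrightarrow{\sigma_A} A^X\slash U$ via Lemma~\ref{lemma:key}, push forward along $f^X\slash U$ using the naturality square $f^X\slash U \circ \sigma_A = \sigma_B \circ f$ of $d_U$, and compare images; your packaging of the conclusion as a single isomorphism $\exists^{\sigma}_{\iota(f)}(i_A(S)) \cong \sigma_B \land i_B(\exists_f(S))$ (using that images commute with postcomposition by the mono $\sigma_B$) is a slightly tidier way of extracting both $\leq^{\st}$-inequalities at once, where the paper proves the easy direction by Frobenius reciprocity and the hard one by the universal property of the image. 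Where you genuinely diverge is the universal case. The paper dispatches it in one sentence: a topos satisfying the internal axiom of choice is Boolean, negation is antitone for $\leq^{\st}$ in a Boolean setting, and $\forall_f = \neg\exists_f\neg$, so universal transfer is the contrapositive of existential transfer applied to the complement of $S$. You instead compute $\sigma_B^{-1}\bigl(\forall_{f^X\slash U}(\sigma_A \Rightarrow S^X\slash U)\bigr)$ directly; this is correct, and you rightly flag that the delicate step is turning the fibrewise reading into an honest subobject computation --- concretely, one pulls the square back along $\sigma_B$, invokes Beck--Chevalley for $\Sub$, identifies the pullback of $\sigma_A$ over the fibre with $A$ itself (using monicity of $d_U$, i.e.\ properness of $U$), and uses the identity $\forall_p(m \Rightarrow \phi) = \forall_{p\circ m}(m^*\phi)$ for a mono $m$ together with Lemma~\ref{lemma:key} to land on $\forall_f(S)$; the final step, that subobjects with isomorphic pullbacks along $\sigma_B$ are $\cong^{\st}$, is sound. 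The cost of your route is extra work that the standing hypothesis renders unnecessary, since the internal axiom of choice is already what makes the ultrapower functor Heyting, so Booleanness comes for free; the benefit is that it makes visible that the universal half does not hinge on the contrapositive trick, which the paper's shortcut obscures.
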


\begin{proof}
   Note that $\exists^{\sigma}_{\iota(f)}(S)$ in this Nelson structure is $\exists^{\mathcal{F}}_{f^X \slash U}(\sigma_A \land S^X\slash U)$  for each standard $S \in \Sub(A)$, by definition of $\mathfrak{X}$. 
    Using the fact that $i_B$ is a Heyting transformation, existential transfer reduces to showing that
    \begin{equation*}
       \exists^{\mathcal{F}}_{f^X \slash U}(S^X \slash U) \cong^{\st} \exists^{\mathcal{F}}_{f^X \slash U}(\sigma_A \land S^X\slash U)
    \end{equation*}
   in $\mathfrak{X}^*(B)=\Sub_{\mathcal{F}}(B^X \slash U)$ for each standard $f \colon A \to B$ and $S \rightarrowtail A$. Moreover, every topos that satisfies the internal axiom of choice is Boolean (performing the argument in \cite{ChoiceImpliesEMGoodmanMyhill} or internalising that of \cite{ChoiceImpliesEMDiaconescu}), therefore universal transfer follows from the existential one.

   For the easy direction, note that $\sigma_A \land S^X \slash U \leq (f^X\slash U)^{-1}(\sigma_B) \land S^X \slash U$ by lax naturality of $\sigma$, thus
   \begin{equation*}
       \sigma_B \land \exists_{f^X\slash U}(\sigma_A \land S^X\slash U) \leq \exists_{f^X\slash U} (S^X \slash U \land (f^X \slash U)^{-1}(\sigma_B)) = \sigma_B \land \exists_{f^X\slash U}(S^X \slash U)
   \end{equation*}
   by Frobenius reciprocity of existential quantification. But that is bound by $\exists_{f^X\slash U}(S^X \slash U)$, so that 
   \begin{equation*}
       \exists_{f^X\slash U}(\sigma_A \land S^X\slash U)\leq^{\st} \exists_{f^X\slash U}(S^X \slash U).
   \end{equation*}

   For the converse, we write $\sigma_B \land \exists_{f^X\slash U}(S^X \slash U) = \sigma_B \circ \exists_f(S)$ and, similarly, $\exists_{f^X\slash U}(\sigma_A \land S^X\slash U) = \exists_{f^X \slash U}(\sigma_A \circ S)$ in $\Sub(B^X \slash U)$ using Lemma~\ref{lemma:key}, so that is suffices to show that
   $\sigma_B \circ \exists_f(S) \leq \exists_{f^X \slash U} (\sigma_A \circ S)$ in $\Sub(B^X\slash U)$. We do so by using the characterisation of existential quantification in a topos in terms of images.

   First we get a mono $\gamma\colon \exists_{f^X \slash U}(\sigma_A \circ S) = \im(f^X \slash U \circ \sigma_A \circ S) \rightarrowtail B$ from the universal property of the image, since naturality of $d_U$ makes $\sigma_B \circ (f \circ S)$ a factorisation of $f^X \slash U \circ \sigma_A \circ S$ through $\sigma_B \colon B \rightarrowtail B^X \slash U$. It makes the diagram
   \begin{equation*}
       \begin{tikzcd}
        & &\exists_{f^X \slash U}(\sigma_A \circ S) \arrow[rightarrowtail]{dd}{\gamma}\arrow[rightarrowtail]{dr}{} & \\
        S\arrow[]{urr}{} \arrow[rightarrowtail]{dr}{}& & & B^X\slash U   \\
        & A\arrow[]{r}{f} &B\arrow[rightarrowtail, swap]{ur}{\sigma_B}& 
       \end{tikzcd}
   \end{equation*}
   commute. Now, the square on the left is a factorisation of $f \circ S$ through  the monomorphism $\gamma$, therefore the universal property of $\exists_f(S) = \im(f \circ S)$ provides a a monomorphism $\delta\colon \exists_f(S) \to \exists_{f^X\slash U}(\sigma_A \circ S)$ making the diagram 
\begin{equation*}
    \begin{tikzcd}
        S \arrow[]{r}{}\arrow[]{dr}{} & \exists_f(S)\arrow[rightarrowtail]{r}{}\arrow[rightarrowtail]{d}{\delta} & B\\
         & \exists_{f^X\slash U}(\sigma_A \circ S) \arrow[rightarrowtail, swap]{ur}{\gamma}&
    \end{tikzcd}
\end{equation*}
commute. But then $\delta$ witnesses $\sigma_B \circ \exists_f(S) \leq \exists_{f^X\slash U}(\sigma_A \circ S)$ in $\Sub(B^X \slash U)$, as needed.
\end{proof}

It remains to address bounded idealisation. We will show that, if $\mathcal{E}$ is a topos satisfying the internal axiom of choice, then for each object $B$ of $\mathcal{E}$ there is a Nelson structure of the form $\mathfrak{N}_U$ that satisfies bounded idealisation relative to $B$-valued internal relations.

Our argument is adapted from the proof attributed to William C. Powell in \cite{nelson1977}; it resorts to the fact that any topos satisfying the internal axiom of choice is Boolean and internally believes in certain core results, namely the fact that any family of subsets that satisfies the finite intersection property is contained in some filter, the ultrafilter principle, and the axiom of choice itself. 

The proofs that follow will be carried out in the internal language of the topos $\mathcal{E}$. We start with the following:

\begin{lemma}
    Let $\mathcal{E}$ be a topos satisfying the internal axiom of choice. For each $B \in \mathcal{E}$, there is an object $X$ and an ultrafilter $U$ on $X$ such that, for every ultrafilter $V$ on $B$, there is $\xi \in B^X\slash U$ with
    $V = \{E \subseteq B\colon \xi \in E^X \slash U\}$ in $\Sub(\wp(B))$.
    \label{lemma:adequate}
\end{lemma}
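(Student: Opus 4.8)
The statement asserts that we can manufacture, for any fixed $B$, an index object $X$ and ultrafilter $U$ on $X$ rich enough that every internal ultrafilter $V$ on $B$ is \emph{realised} by a single element $\xi$ of the ultrapower $B^X\slash U$, in the sense that $V$ is recovered as the set of subsets $E \subseteq B$ whose ultrapower $E^X\slash U$ contains $\xi$. This is the topos-internal incarnation of the classical enlargement construction: one indexes by the ultrafilters themselves (or by finite data approximating them) so that a suitable ``diagonal'' element codes each ultrafilter. Since we are told the argument is carried out in the internal language of $\mathcal{E}$ and that $\mathcal{E}$ satisfies the internal axiom of choice (hence is internally Boolean and internally validates the ultrafilter principle and the finite-intersection-property lemma), the strategy is to reproduce Powell's external argument from \cite{nelson1977} line by line under the internal interpretation.

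\textbf{Construction of $X$ and $U$.} First I would take $X$ to be an object parametrising the relevant finite combinatorial data on $B$ --- internally, the object of pairs $(\mathcal{F}, G)$ where $\mathcal{F}$ is a $K$-finite family of subsets of $B$ with the finite intersection property and $G \in B$ is a chosen witness lying in all members of $\mathcal{F}$ (such a witness exists by the finite intersection property). The point of this choice is that it lets us define, for each subset $E \subseteq B$, a subobject $\widehat{E} \rightarrowtail X$ consisting of those $(\mathcal{F},G)$ with $E \in \mathcal{F}$ (or with $G \in E$), and the family $\{\widehat{E}\}$ will itself enjoy the finite intersection property internally. I would then invoke the internal ``finite intersection property implies containment in a filter'' result together with the internal ultrafilter principle to extend $\{\widehat{E}\}$ to an ultrafilter $U$ on $X$. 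The internal axiom of choice is precisely what licenses these two internal principles, and it is also what guarantees the ultrapower functor $(-)^X\slash U$ is Heyting in the first place.

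\textbf{Realising an arbitrary $V$.} Given an internal ultrafilter $V$ on $B$, I would produce the witnessing element $\xi \in B^X\slash U$ by choosing, internally and via the axiom of choice, a function $X \to B$ (or a partial map with domain in $U$) that on the datum $(\mathcal{F},G)$ returns the witness $G$ whenever $\mathcal{F} \subseteq V$, and define $\xi$ to be its class in the ultrapower. The key verification is then the equivalence $\xi \in E^X\slash U \iff E \in V$ for every $E \in \Sub(\wp(B))$. The forward direction uses that $\{(\mathcal{F},G) : G \in E\}$ and $\widehat{E}$ agree $U$-almost everywhere together with how the ultrapower membership $\xi \in E^X\slash U$ unwinds (via the coequaliser/partial-map-representer description of $B^X\slash U$) to ``$\{x : \xi(x) \in E\} \in U$''; the reverse direction uses that $\widehat{E} \in U$ by construction whenever $E \in V$. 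Because $V$ is an ultrafilter, matching it on all $E$ (equivalently, checking $E \in V \Leftrightarrow E^c \notin V$ survives the translation, which is where internal Booleanness enters) pins down the equality of subobjects $V = \{E \subseteq B : \xi \in E^X\slash U\}$ in $\Sub(\wp(B))$.

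\textbf{The main obstacle.} The delicate part will not be the external skeleton of Powell's argument but its faithful internalisation: making sure that every set-theoretic step (choosing witnesses, forming finite families, extending to an ultrafilter, taking the coequaliser class) is expressed as a genuine internal statement interpreted in $\mathcal{E}$, and that the informal ``$\xi \in E^X\slash U$'' is correctly unwound through the partial-map-representer and coequaliser definition of the ultrapower given earlier so that it really does mean $\{x : \xi(x) \in E\} \in U$. In particular one must check that the chosen $\xi$ has domain in $U$ (properness) and that the equivalence is stable, i.e.\ holds as subobjects of $\wp(B)$ rather than merely on global sections --- this is where I expect the bookkeeping to be heaviest, and where the hypotheses of internal choice and Booleanness are doing the real work.
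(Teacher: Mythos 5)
Your overall plan (internal FIP--filter extension, internal ultrafilter principle, internal choice to pick witnesses, and maximality of ultrafilters to upgrade the containment $V \subseteq \{E : \xi \in E^X\slash U\}$ to an equality) is the right skeleton, and it is indeed Powell's argument internalised. But your construction of $X$ has a fatal defect. You take $X$ to be the object of pairs $(\mathcal{F},G)$ with $\mathcal{F}$ a $K$-finite family of subsets of $B$ \emph{having the finite intersection property} and $G \in \bigcap\mathcal{F}$, and you then claim that the family of all $\widehat{E} = \{(\mathcal{F},G) : E \in \mathcal{F}\}$ has the FIP. It does not: to inhabit $\widehat{E_1}\cap\cdots\cap\widehat{E_n}$ you need a single admissible $\mathcal{F}$ containing all the $E_j$, and your definition of $X$ then forces $E_1\cap\cdots\cap E_n$ to be inhabited, which fails already for a set and its complement (and $\widehat{\emptyset}$ is outright empty). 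The same objection kills the alternative reading $\widehat{E}=\{(\mathcal{F},G): G\in E\}$. The paper avoids this precisely by \emph{not} imposing any compatibility condition on the finite family: it takes $X = K(\wp(B))$ and $E^! = \{x \in X : E \in x\}$, so that $\{E_1,\dots,E_n\}$ itself witnesses $E_1^!\cap\cdots\cap E_n^! \neq \emptyset$ no matter how the $E_j$ intersect.

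The second defect is in the realisation of $V$. You define $\xi$ to return the stored witness $G$ ``whenever $\mathcal{F}\subseteq V$''. But the subobject $\{(\mathcal{F},G): \mathcal{F}\subseteq V\}$ is disjoint from $\widehat{E}$ for every $E \notin V$, and those $\widehat{E}$ are supposed to lie in $U$; so the locus where your $\xi$ behaves correctly is $U$-negligible, and its class in $B^X\slash U$ carries no information about $V$. The point you are missing is that the witness must be chosen \emph{after} $V$ is given and must depend on $V$: for each $x \in X$ one sets $B_x = \bigcap\{E \in x : E \in V\}$ --- intersecting only those members of $x$ that happen to lie in $V$, a finite intersection of $V$-sets and hence inhabited --- and uses the internal axiom of choice to pick $\xi_x \in B_x$. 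Then $E^! \subseteq \{x : \xi_x \in E\}$ for every $E \in V$, and since $U$ contains $E^!$ and is upward closed this gives $\xi \in E^X\slash U$, which is exactly the containment you need before invoking maximality.
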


\begin{proof}
    Let $X=K(\wp(B))$ be the object of finite subobjects of $\wp(B)$, and internally define $E^!\coloneqq \{x \in X \colon E \in x\}$ for each $E \in \Sub(B)$. 
    A finite intersection of sets of the form $E^!$ is inhabited (e.g., $\cap_{j=1}^n {E_j}^!$ is inhabited by $\{E_1, \cdots, E_n\} \in X$)\footnote{Recall that all the typical notions of finiteness coincide within a Boolean topos.}, thus the family of all the $E^!$ with $E \subseteq B$ satisfies the finite intersection property. They therefore generate a filter, which can be extended to an ultrafilter $U$ by the ultrafilter principle.
    
    Let $V$ be an ultrafilter on $B$ and, for each $x \in X$, internally define $B_x \coloneqq \bigcap \{E \in x \colon E \in V\}$. Each $B_x$ is in $V$ (as filters are closed under finite intersections), and therefore $B_x$ is inhabited (as $V$ is an ultrafilter).
    Using the internal axiom of choice, there is $\xi_x \in B_x$ for each $x \in X$. Define $\xi$ as the equivalence class of the partial map $x \mapsto \xi_x$ modulo $U$.
    
    Let $E \in V$. Note that if $x \in X$ and $E \in x$, then $B_x \subseteq E$. As $\xi_x \in B_x$, we conclude that $\xi_x \in E$. In summary, $E^! \subseteq \{x \in X \colon \xi_x \in E\}$ for all $E \in V$. But $U$ contains all the $E^!$ and is a terminal segment of $X$, thus $\{x \in X \colon \xi_x \in E\} \in U$ for all $E \in V$. We conclude that $\xi \in E^X\slash U$ for all $E \in V$, i.e., $V \subseteq \{E \subseteq B\colon \xi \in E^X \slash U\}$. As ultrafilters are maximal, this is actually an equality.
\end{proof}

An ultrapower $B^X\slash U$ with respect to an ultrafilter satisfying the property in the conclusion of Lemma~\ref{lemma:adequate} is said to be an \emph{adequate ultrapower}.

\begin{corollary}
     Let $A$ and $B$ be objects of $\mathcal{E}$, $B^X\slash U$ be an adequate ultrapower of $B$, and $R \colon 1 \to \wp(A \times B)^X \slash U$ be an internal relation. Then $\mathfrak{N}_U$ satisfies the $R$-instance of the axiom of bounded idealisation.
\end{corollary}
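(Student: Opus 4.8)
The plan is to first strip away the doctrine-level packaging and reduce the $R$-instance of Axiom~\ref{ax:KFiniteIdealisation} to an elementary statement in the internal language of $\mathcal{E}$ about the ultrapower $\hat{B} := B^X\slash U$. Since $\mathfrak{X}^* = \Sub_{\mathcal{E}}((-)^X\slash U)$ and $\sigma_A = d_U(A)$ singles out the standard elements $d_U(a)$ (with $a \in A$), and since standard $K$-finite subobjects of $A$ are exactly the $d_U$-images of genuine $z_0 \in K(A)$ whose elements are the $d_U(a)$ with $a \in z_0$, the hypothesis unwinds---using that $\mathcal{E}$ is Boolean and satisfies internal choice, so that the interpretation of $[r]$ obeys the expected \L o\'s-style condition---to the assertion that the family of slices $R_a := \{y \in \hat{B} : (d_U(a), y) \in R\}$, indexed by $a \in A$, has the finite intersection property, while the conclusion becomes $\bigcap_{a \in A} R_a \neq \emptyset$. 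Thus the whole statement is a compactness/saturation property of $\hat{B}$: an $A$-indexed family of (internal) subobjects of $\hat B$ with the finite intersection property has inhabited total intersection.

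Working internally in $\mathcal{E}$, I would then produce the required witness $y \in \hat B$ by an index-wise construction modelled on the proof of Lemma~\ref{lemma:adequate}. The finite intersection property, via the internal fact that any family of subsets with the finite intersection property is contained in a filter together with the internal ultrafilter principle, yields an ultrafilter $V$ on $B$ tailored to the type $\{R_a\}$; Lemma~\ref{lemma:adequate} (adequacy of $\hat B$, with $X = K(\wp(B))$) then provides a realiser, which I would build explicitly as the class of a partial map $g \colon X \rightharpoonup B$. At each index $x \in X$---a \emph{finite} set of subsets of $B$---the value $g(x)$ is chosen, using internal choice, in the intersection of the $V$-members listed in $x$ together with the finitely many slices $(r_x)_a$ visible at $x$; the finite intersection property is exactly what keeps this intersection inhabited, so $g$ is defined on a $U$-large domain.

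The verification that $\xi := [g]$ lies in every $R_a$ would run as in Lemma~\ref{lemma:adequate}: for a fixed standard $a$, the set of indices at which the relevant slice is both present and selected is $U$-large because $U$ contains every $E^{!} = \{x : E \in x\}$ (the cofinal property engineered in the proof of Lemma~\ref{lemma:adequate}) and filters are upward closed. Hence $\{x : g(x) \in (r_x)_a\} \in U$, i.e.\ $(d_U(a), \xi) \in R$ for all standard $a$, which is the conclusion. Booleanness of $\mathcal{E}$ (a consequence of internal choice) is used both to license the \L o\'s-style reduction through complements and, as elsewhere in this section, to ensure the bounded universal form follows.

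The main obstacle is that the slices $R_a$ are in general \emph{nonstandard} subobjects of $\hat B$, since the relation $R$ may carry nonstandard parameters: a slice $(r_x)_a$ genuinely varies with the index $x$, so it is not of the form $E^X\slash U$ for a fixed $E \subseteq B$. Adequacy, on the other hand, only controls the realiser's membership in \emph{standard} subobjects of $\hat B$. The delicate point is therefore to reconcile these: one must exploit the specific adequate index $X = K(\wp(B))$ and the cofinality of $U$ so that the per-index choices respect the varying slices---covering every standard $a$ on a $U$-large set of indices while keeping the chosen intersections inhabited via the finite intersection hypothesis. Getting the bookkeeping of ``finitely many conditions per index'' to cover the whole of $A$ in the $U$-almost-everywhere sense, despite the $x$-dependence of the slices, is where the real work lies, and is exactly the step Powell's original argument is designed to handle.
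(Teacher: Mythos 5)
Your overall strategy --- finite intersection property, an ultrafilter $V$ on $B$, and adequacy of $B^X\slash U$ to realise $V$ --- matches the paper's in outline, but the proof is not complete: the step you flag at the end as ``where the real work lies'' is precisely the step that is missing, and your proposed way of handling it would not go through as described. You reduce the hypothesis to the finite intersection property of the slices $R_a = \{y \in \hat B : R(d_U(a),y)\}$, which are subobjects of the \emph{ultrapower} $\hat B$ and are genuinely nonstandard; adequacy, as you correctly observe, only controls membership of the realiser in subobjects of the form $E^X\slash U$ with $E \rightarrowtail B$ a fixed subobject of $B$. Your fix --- re-running the construction of Lemma~\ref{lemma:adequate} and choosing $g(x)$ inside ``the finitely many slices $(r_x)_a$ visible at $x$'' --- is not well defined: an index $x \in K(\wp(B))$ is a finite set of \emph{subsets of $B$}, so nothing about $x$ singles out finitely many elements $a$ of $A$, and the ultrafilter $U$ of Lemma~\ref{lemma:adequate} is engineered only against the sets $E^{!}$ for $E \subseteq B$, not against any $A$-indexed, index-dependent family. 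You would have to redesign the index object and re-prove adequacy, which is exactly the work you defer.

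The paper's proof dissolves this obstacle by a different reduction. For each $K$-finite $F \rightarrowtail A$ it forms the subobject
\[
S_F \;=\; \{\, b \in B : (\forall a \in F)\ R([c_a],[c_b]) \,\} \;\rightarrowtail\; B,
\]
i.e.\ it restricts \emph{both} arguments of $R$ to standard elements, producing honest subobjects of $B$ rather than slices of $\hat B$. The hypothesis of idealisation says exactly that each $S_F$ is inhabited, and the containments $S_{F}\cap S_{F'} \supseteq S_{F \cup F'}$ give the finite intersection property, so the $S_F$ generate a filter that extends to an ultrafilter $V$ on $B$. Now Lemma~\ref{lemma:adequate} applies as a black box: it yields $\xi \in B^X\slash U$ with $V = \{E \subseteq B : \xi \in E^X\slash U\}$, and since $S_{\{a\}} \in V$ for every $a$, we get $\xi \in S_{\{a\}}^X\slash U$, i.e.\ $R([c_a],\xi)$ for all standard $a$. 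The only elements of $\wp(B)$ that the index object $K(\wp(B))$ needs to ``see'' are the fixed sets $S_F$, not anything varying with the ultrapower index, so no modification of the adequacy construction and no bookkeeping of index-dependent slices is required. If you replace your slices $R_a$ by the traces $S_{\{a\}}$ and their finite intersections $S_F$, your argument closes.
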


\begin{proof}
   Suppose it is internally true that for every finite $F \in \Sub(A)$ there is $y \in B$ such that $R([c_x],[c_y])$ holds for all $x \in F$. For each $F \in K(A)$, the subobjects
    \begin{equation*}
        \{b \in B \colon (\forall a \in F) R( [c_a], [c_b])\} \rightarrowtail B
    \end{equation*}
    are thus inhabited. There is therefore a filter on $B$ generated by them, which can be extended to an ultrafilter $V$ on $B$.
    
    As $B^X\slash U$ is adequate, we have $V=\{E \in \wp(B) \colon \xi \in E^X\slash U\}$ for some element $\xi \in B^X \slash U$. Taking $F=\{a\}$, we note that $V$ contains all the subobjects for the form $\{b \in B \colon R([c_a],[c_b])\}$ for $a \in A$. So ${\xi \in \{b \in B \colon R([c_a],[c_b])\}^X\slash U}$ for each $a \in A$, i.e., $R([c_a], \xi)$ holds.
\end{proof}

Combining the results of this section, we obtain:

\begin{theorem}
    Let $\mathcal{E}$ be a topos satisfying the internal axiom of choice and $B$ be an object of $\mathcal{E}$. Then there is an ultrafilter $U$ on an object of $\mathcal{E}$ such that $\mathfrak{N}_U$ satisfies topos-theoretic standardisation, topos-theoretic transfer, and every instance of $B$-bounded standardisation.
\end{theorem}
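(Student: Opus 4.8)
The plan is to read this as a summary theorem: all three properties have in effect already been verified above, so the only genuine work is to exhibit a \emph{single} ultrafilter $U$ for which they hold simultaneously. Concretely, I would fix the object $B$, take the ultrafilter produced by Lemma~\ref{lemma:adequate}, and then check that this one choice is compatible with the (much weaker) hypotheses under which standardisation and transfer were obtained. The theorem is thus an assembly of the preceding Corollary, the transfer Lemma, and the Corollary to Lemma~\ref{lemma:adequate}, tied together by a compatibility observation.

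First I would note that the preceding Corollary and the transfer Lemma establish topos-theoretic standardisation and topos-theoretic transfer for \emph{every} structure of the form $\mathfrak{N}_U$, with no restriction on $U$ beyond its being an internal ultrafilter on an object of a topos satisfying the internal axiom of choice. Both proofs rest solely on Lemma~\ref{lemma:key} --- the identification $\sigma_A \cap S^X\slash U \cong S$ in $\Sub(A)$ --- together with Frobenius reciprocity and the image description of existential quantification (using that internal choice forces $\mathcal{E}$ to be Boolean, so universal transfer follows from the existential form). None of these ingredients constrains $U$, so these two clauses place no demand whatsoever on the ultrafilter.

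The remaining clause is where the choice of $U$ becomes essential, and I would secure it via Lemma~\ref{lemma:adequate}. Taking $X = K(\wp(B))$, that lemma builds an ultrafilter $U$ on $X$ --- by extending the filter generated by the sets $E^! = \{x \in X \colon E \in x\}$, which enjoy the finite intersection property, to an ultrafilter using the ultrafilter principle available in the Boolean topos $\mathcal{E}$ --- for which $B^X\slash U$ is an \emph{adequate} ultrapower: every internal ultrafilter $V$ on $B$ is represented as $\{E \subseteq B \colon \xi \in E^X\slash U\}$ for some $\xi \in B^X\slash U$. The Corollary to Lemma~\ref{lemma:adequate} then shows that, for precisely this $U$, $\mathfrak{N}_U$ satisfies every $B$-bounded instance of the remaining Nelson axiom (Axiom~\ref{ax:KFiniteIdealisation}, relative to $B$-valued internal relations), by extracting from a suitable ultrafilter on $B$ the element $\xi$ witnessing the outer existential.

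Assembling these, the adequate $U$ of Lemma~\ref{lemma:adequate} delivers all three properties at once, which is exactly the assertion. I expect the only real obstacle to be the conceptual check at the seam: one must confirm that forcing adequacy --- a condition solely about which internal ultrafilters on $B$ are realised inside $B^X\slash U$ --- does not interfere with the hypotheses under which standardisation and transfer hold. Since those require nothing of $U$ beyond its being an internal ultrafilter on an object of a topos with internal choice, and the adequate $U$ of Lemma~\ref{lemma:adequate} is manifestly of this form, there is no conflict, and the three results combine without friction.
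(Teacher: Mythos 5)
Your proposal is correct and takes essentially the same route as the paper, whose proof is precisely the implicit assembly you describe: the corollary to Lemma~\ref{lemma:key} gives topos-theoretic standardisation and the transfer lemma gives topos-theoretic transfer for \emph{every} $\mathfrak{N}_U$, while the adequate ultrafilter $U$ on $X = K(\wp(B))$ from Lemma~\ref{lemma:adequate} and its corollary supply every $B$-bounded instance of idealisation (you also rightly read the statement's ``$B$-bounded standardisation'' as the intended $B$-bounded idealisation). Your compatibility check at the seam is exactly the unstated triviality the paper relies on, so nothing is missing.
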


In summary, topos-theoretic transfer and standardisation hold for any Nelson structure of the form $\mathfrak{N}_U$ whose standard universe satisfies the internal axiom of choice, but they do not necessarily satisfy bounded idealisation. However, we can always find an ultrafilter $U$ for which $\mathfrak{N}_U$ also satisfies $B$-bounded idealisation, for any given $B$. 

There are situations where the limitations on idealisation can be circumvented. If $\mathcal{E}$ is also locally small and complete (e.g., a Boolean \'etendue \cite{CategoriesAllegories} or a model for ZFA set theory), then one can construct a cumulative hiearchy within the topos \cite{FOURMAN198091}, and use the fact that every object occurs as a subobject of a sufficiently high level of the hierarchy. The same problem exists for classical Internal Set Theory, and this workaround was implemented in Theorem 8.7 of \cite{nelson1977}. 

 \bibliographystyle{unsrt} 
 \bibliography{main}





\end{document}